\newtheorem{thm}{Theorem}[section]
\newtheorem{prop}[thm]{Proposition.\nopagebreak}
\newtheorem{exam}[thm]{Example.\nopagebreak}
\def\eeq{\end{equation}} 
\def\lbeq#1{\begin{equation} \label{#1}} 
\def\Forall{~~~\mbox{for all }}  % \forall is the quantifier
\def\eps{\varepsilon} 
\def\D{\displaystyle}				% display style
\def\ol{\overline}
\def\fct#1{\mathop{\rm #1}}	% e.g.,  \fct{tr}
\def\argmin{\fct{argmin}}
\def\half{\frac{1}{2}}
\def\wh{\widehat}
\def\bary{\begin{array}}
\def\eary{\end{array}}
\def\st{\fct{s.t.~}}
\def\ol{\overline}
\def\<{\langle}
\def\>{\rangle}
\def\Rz{\mathbb{R}}
\def\Forall{{~~~\mbox{for all}~}}
\def\half{\frac{1}{2}} 
\title{An optimal subgradient algorithm for large-scale convex optimization in simple domains
       }
\author{
Masoud Ahookhosh\thanks{Faculty of Mathematics, University of Vienna,
Oskar-Morgenstern-Platz 1, 1090 Vienna, Austria. ({\tt masoud.ahookhosh@univie.ac.at})}
\and Arnold Neumaier\thanks{Faculty of Mathematics, University of Vienna,
Oskar-Morgenstern-Platz 1, 1090 Vienna, Austria. ({\tt Arnold.Neumaier@univie.ac.at})}
        }
\begin{document}
\maketitle
\slugger{mms}{xxxx}{xx}{x}{x--x}%slugger should be set to mms, siap, sicomp, sicon, sidma, sima, simax, sinum, siopt, sisc, or sirev

\begin{abstract}
This paper shows that the optimal subgradient algorithm, OSGA, proposed in \cite{NeuO} can be used for solving structured large-scale convex constrained optimization problems. Only first-order information is required, and the optimal complexity bounds for both smooth and nonsmooth problems are attained. More specifically, we consider two classes of problems: (i) a convex objective with a simple closed convex domain, where the orthogonal projection on this feasible domain is efficiently available; (ii) a convex objective with a simple convex functional constraint. If we equip OSGA with an appropriate prox-function, the OSGA subproblem can be solved either in a closed form or by a simple iterative scheme, which is especially important for large-scale problems. We report numerical results for some applications to show the efficiency of the proposed scheme. A software package implementing OSGA for above domains is available. 
\end{abstract}

\begin{keywords} 
structured convex optimization, sparse optimization, nonsmooth optimization, projection operator, optimal complexity, first-order black-box information, 
high-dimensional data 
\end{keywords}

\begin{AMS}90C25 \and 90C60 \and 90C06 \and 65K05 \end{AMS}

\pagestyle{myheadings}
\thispagestyle{plain}
\markboth{\sc M. Ahookhosh and A. Neumaier}{\sc OSGA for  convex optimization in simple domains}

% ########################################################
% ########################################################
\section{Introduction}
Convex optimization has been shown to provide efficient algorithms for computing reliable solutions in a broad range of applications. Many applications arising in applied sciences and 
engineering such as signal and image processing, machine learning, statistics, and general inverse problems can be addressed by a convex optimization problem involving high-dimensional data. In practice, solving a nonsmooth convex problem is usually more difficult and costly than a smooth one. More precisely, for a prescribed accuracy parameter $\varepsilon$, the optimal complexity to achieve an $\varepsilon$-solution of nonsmooth Lipschitz continuous problems is $O(\varepsilon^{-2})$, the superior complexity $O(\varepsilon^{-1/2})$ for smooth problems with Lipschitz continuous gradient, see \cite{NemY,NesB}.

Thanks to the low memory requirement and simple structure, first-order methods have received much attention during the past few decades. Indeed, they deal successfully with large-scale problems. In general,
convex optimization problems can be solved by gradient-type algorithms \cite{AhoG,BerB,BerT,Gol}, conjugate gradient methods \cite{GilN,HagZ1,HagZ2} and spectral gradient methods \cite{BarB,BirMR,Ray} for smooth objectives and by subgradient-type methods \cite{BoyXM,NedB,NesP}, proximal gradient methods \cite{ParB,ComP}, smoothing techniques \cite{BecT1,BotH1,DevGN2,NesS}, bundle-type algorithms \cite{LamNN,Lan}, and primal-dual first-order methods \cite{BotH2,BotCH,ChaP} for nonsmooth objectives.
Moreover, both classes can be addressed by (zero-order) coordinate descent methods and derivative-free methods. The current paper only addresses first-order methods and assumes that first-order black-box information -- function values and subgradients -- of the objective function are available.

Historically, gradient descent and subgradient methods were the first numerical schemes proposed to solve optimization problems 
with smooth and nonsmooth convex objective functions, respectively. In practice, they are too slow, especially for badly scaled problems. This can be addressed by their worst-case complexity bounds to reach an $\varepsilon$-solution, while the gradient descent method achieve the 
complexity of the order $O(\varepsilon^{-1})$ which is not optimal for smooth problems, the subgradient methods attain the worst-case complexity of the order $O(\varepsilon^{-2})$.
In 1983, {\sc Nemirovski \& Yudin} in \cite{NemY} derived optimal worst-case complexity bounds of first-order methods to achieve an $\varepsilon$-solution for several class of problems such as Lipschitz continuous nonsmooth problems and smooth problems with Lipschitz continuous gradient. If an algorithm attains the optimal worst-case complexity bound for a class of problems, it is called optimal. Optimal first-order methods dating back to {\sc Nesterov} \cite{Nes83} in 1983. This optimal first-order method is interesting both theoretically and computationally, attracting many researchers to work in the development of such schemes, for example {\sc Auslander \& Teboulle} \cite{AusT}, {\sc Beck \& Teboulle} 
\cite{BecT2}, {\sc Devolder} et al. \cite{DevGN1}, {\sc Gonzaga} et al. \cite{GonK,GonKR}, {\sc Lan} \cite{Lan}, {\sc Lan} et al. \cite{LanLM}, {\sc Nesterov} 
\cite{NesS,NesE,NesC}, {\sc Neumaier} \cite{NeuO} and 
{\sc Tseng} \cite{Tse}. Computational 
comparisons for composite functions show that optimal Nesterov-type first-order methods are substantially superior to the gradient descent and subgradient methods, see, for example, {\sc Ahookhosh} \cite{Aho} and {\sc Becker} et al. \cite{BecCG}.\\

{\bf Content.} In this paper we consider structured convex constrained optimization problems frequently observed in applications and develop OSGA to efficiently solve such problems. Two clasess of convex domains are considered, namely, simple convex domains such that the orthogonal projection is cheaply feasible, and sublevel set of a convex function referred as functional domain. For problems with a simple domain, we first introduce an appropriate prox-function and then show that the solution of OSGA's subproblem is obtained by a projection on the domain followed by solving a one-dimensional nonlinear equation. It is shown that if explicit formula for projection is available, the nonlinear equation can be solved in a closed form in many interesting cases. We also establish the optimality condition for functional domain and show for some simple functions that results to in a closed form solution. Finally, we report some numerical results for applications to show the efficiency OSGA in comparison with some state-of-the-art algorithms. 

The remainder of this paper is organized as follows. In the next section, we review the basic idea of OSGA. Section 3 considers the structured convex constrained
minimization and how to solve the associated OSGA subproblem. We report numerical results in Section 4 and  our conclusions are derived in Section 5.\\

%%%%%%%%%%%%%%%%%%%%%%%%%%%%%%%%
{\bf Notation and preliminaries.}
Let $\mathcal{V}$ be a real finite-demensional vector space endowed with the norm $\|\cdot\|$, and $\mathcal{V}^*$ denotes the dual space of all linear functional on $\mathcal{V}$ where the bilinear pairing  $\langle g,x\rangle$ denotes the value of the functional $g \in \mathcal{V}^*$ at $x \in \mathcal{V}$. If $\mathcal{V} = \mathbb{R}^n$, then 
\[
\|x\|_2 := \left(\sum_{i=1}^n |x_i|^2\right)^{1/2}.
\]
If $x \in \mathbb{R}^{m \times n}$, then the Schatten $\infty$-norm is $\|\sigma(x)\|_\infty$ where $\sigma: \mathbb{R}^{m \times n} \rightarrow \mathbb{R}^{\min \{m,n\}}$ is the function that takes a matrix $x \in \mathbb{R}^{m \times n}$ and returns a vector of singular values in nonincreasing order. If $x$ is a positive definite matrix, we denotes it by $x \succcurlyeq 0$. We also denote by $x =  \sum_{i=1}^n \lambda_i u_i u_i^T$ and $x =  \sum_{i=1}^n \sigma_i u_i v_i^T$ the eigenvalue decomposition and the singular value decomposition of $x$. For a function $f: \mathcal{V} \rightarrow \overline{\mathbb{R}} = \mathbb{R}\cup\{\pm \infty\}$, we denote by 
\[
\mathrm{dom}f := \{ x \in \mathcal{V} ~|~ f(x) < +\infty\}
\]
its {\bf \emph{effective domain}} and call $f$ {\bf \emph{proper}} if $\mathrm{dom}f \neq \emptyset$ and $f(x) > - \infty$ for all $x \in \mathcal{V}$. The vector $g \in \mathcal{V}^* $ is called a {\bf \emph{subgradient}} of $f$ at $x$ if $f(x) \in \mathbb{R}$ and
\[
f(y) \geq f(x) + \langle g,y-x \rangle~~~ \mathrm{for~all}~ y \in \mathcal{V}.
\] 
The set $\partial f(x)$ of all subgradients is called the 
{\bf \emph{subdifferential}} of $f$ at $x$.

We call  a nonempty, closed, and convex subset $C$ of $\mathcal V$ a 
{\bf simple convex domain} if the {\bf \emph{orthogonal projection}}
\lbeq{e.pro}
\mathrm{P}_{C}(y) := \argmin_{x \in C} \frac{1}{2} \|x-y\|^2
\eeq
of $y$ to $C$ can be found efficiently for every $y\in \cal V$. 
Note that $P_C(y)$ is unique since $\frac{1}{2} \|x-y\|^2$ is strongly 
convex. 
Computing the orthogonal projection is a well-studied topic on convex 
optimization, and the projection operator is available for many domains 
$C$ either in a closed form or by a simple iterative scheme. 
Table 1 gives some practically interesting convex domains, associated 
projection operators, and references for the formulas or iterative 
schemes.

%%%%%%%%%%%%%%%%%%%%%%%%%%%%%%%
\begin{table}[h] 
\caption{List of some available projection operators for $C=\{x \in \mathcal{V} \mid c(x)\}$}
\label{t.proj}
\begin{center}\footnotesize
\renewcommand{\arraystretch}{1.3}
\begin{tabular}{|l|l|l|}\hline
\multicolumn{1}{|l|}{{\bf defining constraint $c(x)$}} & \multicolumn{1}{l|}{{\bf Projection operator}}
&\multicolumn{1}{l|}{{\bf Ref.}} \\ 
\hline
$  Ax=b$ & $u = y - A^\dagger (Ay-b)$ & \cite{ParB}\\
\hline
$\langle a,x \rangle = b$ & $u = y - (\langle a,y \rangle - b)/(\|a\|_2^2)~ a$ & \cite{Bau}\\
\hline
$\langle a,x \rangle \leq b$ & $u = y - (\langle a,y \rangle - b)_+/(\|a\|_2^2)~ a$ & \cite{Bau}\\
\hline
$ |\langle a,x \rangle| \leq b $ & $u = \left\{
 \begin{array}{ll}
 y & \mathrm{if}~ |\langle a,y \rangle| \leq b\\
 y + (b - \langle a,y \rangle)/(\|a\|_2^2)~ a & \mathrm{if}~ \langle a,y \rangle > b\\
y + (-b - \langle a,y \rangle)/(\|a\|_2^2)~ a & \mathrm{if}~ \langle a,y \rangle < -b\\
 \end{array}
 \right.
$ & \cite{Bau,BauC}\\
\hline
$\underline{b} \leq Ax \leq \overline{b}$ & 
 $
 \begin{array}{l}
 u = x-\sum_{i=1}^N \lambda_i(x)/(\|A_{i:}\|_2^2) A_{i:}, \\
  \lambda_i(x) := \left\{
 \begin{array}{ll}
 0 &\mathrm{if}~ \underline{b}_i \leq \langle A_{i:},x \rangle \leq \overline{b}_i,\\
 \langle A_{i:},x \rangle - \overline{b}_i &\mathrm{if}~ \langle A_{i:},x \rangle > \overline{b}_i,\\
 \langle A_{i:},x \rangle - \underline{b}_i &\mathrm{if}~ \underline{x}_i > \langle A_{i:},x \rangle.
 \end{array}
 \right.
 \end{array}$ & \cite{Bau}\\
\hline
$ x \in [\underline{x}, \overline{x}]$ & $u = \sup\{\underline{x}, \inf\{y,\overline{x}\}\}
 $ & \cite{Bau}\\
\hline
$x \geq 0$ & $u = (y)_+ := \max(y,0) $ & \cite{ParB}\\
\hline
$ \|x\|_1 \leq \xi$ & iterative scheme & \cite{DucSSC,ParB}\\
\hline
$ \|x\|_2 \leq \xi$ & $u = \left\{
 \begin{array}{ll}
 \xi y/\|y\|_2 & \mathrm{if}~ \|y\|_2 > \xi\\ 
 y       & \mathrm{if}~ \|y\|_2 \leq \xi
 \end{array}
 \right.
$ & \cite{Bau}\\
\hline
$\|x\|_\infty \leq \xi$ & $u = \sup\{-\xi I, \inf\{y,\xi I \}\}
 $ & \cite{ParB}\\
\hline
$ \{(x,t) \mid \|x\|_2 \leq t \}$ & $u = \left\{
 \begin{array}{ll}
 0 & \mathrm{if}~ \|y\|_2 \leq -t\\
 (y,t) & \mathrm{if}~ \|y\|_2 \leq t\\
 1/2(1+t/\|y\|_2)(y,\|y\|_2) & \mathrm{if}~ \|y\|_2 \geq |t|\\
 \end{array}
 \right.
$ & \cite{Bau}\\
\hline
 Exponential cone & iterative scheme & \cite{ParB}\\
\hline
 Epigraphs & iterative scheme & \cite{Bau}\\
\hline
 Sublevel sets & iterative scheme & \cite{Bau}\\
\hline
 Simplex & iterative scheme & \cite{ParB}\\
\hline
$ x \succcurlyeq 0, x =  \sum_{i=1}^n \lambda_i u_i u_i^T$ & $u= \sum_{i=1}^n (\lambda_i)_+ u_i u_i^T$ & \cite{ParB}\\
\hline
$ x \succcurlyeq 0, tr(x)=1$ & iterative scheme & \cite{ParB}\\
\hline
$ \|\sigma(x)\|_\infty \leq 1, x =  \sum_{i=1}^n \sigma_i u_i v_i^T$ & $u= \sum_{i=1}^n \max(\lambda_i,1) u_i u_i^T$ & \cite{ParB}\\
\hline
\end{tabular}
\end{center}
\end{table}

\vspace{5mm}
% ######################################################
% ######################################################
\section{A review of OSGA}
In what follows we briefly review the main idea of optimal subgradient 
algorithm proposed by {\sc Neumaier} in \cite{NeuO}. To this end, we 
first consider the convex constrained minimization problem
\lbeq{e.func0}
\begin{array}{ll}
\min &~ f(x)\\
\st  &~ x \in C,
\end{array}
\eeq
where $f:C\to\Rz$ is a convex function defined on a nonempty, closed 
and convex subset $C$ of $\mathcal{V}$. The main objective is to find 
a solution $u\in C$ by using the first-order information, i.e., function values and subgradients.
 
OSGA (see Algorithm 1) is an optimal subgradient algorithm for problem 
(\ref{e.func0}) that constructs a sequence of iterations whose related 
function values converge to the minimum with the optimal complexity. 
Moreover, OSGA 
requires no information regarding global parameters such as Lipschitz 
constants of function values and gradients. The primary 
objective is to monotonically reduce bounds on the error $f(x_b)-\wh f$ 
of function values, where $\wh f$ is the minimum and $x_b$ is the 
best known point.  

OSGA considers the linear relaxations 
\lbeq{e.f1}
f(z)\ge \gamma +\<h,z\> \Forall z\in C,
\eeq
of $f$ at $z$, where $\gamma\in\Rz$ and $h\in \mathcal{V}^*$, and a 
continuously differentiable prox-function $Q:C\to \Rz$ satisfying
\lbeq{e.Qinf}
Q_0:=\inf_{z\in C} Q(z) >0
\eeq
and
\lbeq{e.strc1}
Q(z)\ge Q(x)+\<g_Q(x),z-x\>+\frac{\sigma}{2}\|z-x\|^2 \Forall x,z\in C,
\eeq
where $\sigma = 1$, $g_Q(x)$ denotes the gradient of $Q$ at $x\in C$ and $\|\cdot\|$ 
is a norm defined on $\mathcal{V}$. OSGA solves a sequence of 
minimization problems of the form
\lbeq{e.Eeta}
\begin{array}{ll}
\sup & E_{\gamma,h}(x)\\
\st  &  x \in C,
\end{array}
\eeq
where it is known that the supremum is positive. The function $E_{\gamma,h}: C \rightarrow \Rz$ is defined by
\begin{equation}\label{e.ex}
 E_{\gamma,h}(x):= -\frac{\gamma+\<h,x\>}{Q(x)}.
\end{equation}
If $u=U(\gamma,h)\in C$ is the solution of this problem, then it is
assumed that $e = E(\gamma,h)$ and $u = U(\gamma,h)$ are readily 
computable. 

In \cite{NeuO}, it is shown that OSGA attains the following bound on 
function values
\[
0\le f(x_b) -\wh f\le \eta Q(\wh x).
\]
Hence, by decreasing the error factor $\eta$, the convergence to an $\varepsilon$-minimizer $x_b$ is guaranteed by 
\[
0\le f(x_b) -\wh f\le\eps,
\]
for the accuracy tolerance $\eps>0$.
In \cite{NeuO}, it is shown that the number of iterations to achieve the optimizer is in the order 
$O\left(\varepsilon^{-1/2} \right)$ for smooth $f$ with Lipschitz continuous gradients and in the order $O\left(\varepsilon^{-2} \right)$ for Lipschitz continuous nonsmooth $f$, which is optimal in both cases, cf. {\sc Nemirovsky \& Yudin} \cite{NemY} and {\sc Nesterov} \cite{NesB}. The algorithm does not need to know about the global Lipschitz parameters and has the low memory requirement. Hence if the subproblem (\ref{e.Eeta}) can be solved efficiently, OSGA is appropriate for solving large-scale problems. Numerical results reported by {\sc Ahookhosh} in \cite{Aho} and {\sc Ahookhosh \& Neumaier} in \cite{AhoN1,AhoN2}, for unconstrained problems, and {\sc Ahookhosh \& Neumaier} in \cite{AhoN3,AhoN4}, for constrained problems, show the promising behavior of OSGA for practical problems. In the next section we show that by selecting a suitable prox-function, OSGA's subproblem (\ref{e.Eeta}) can be solved efficiently for structured convex constrained problems. 

\vspace{7mm}
%%%%%%%%%%%%%%%%%%%%%%%%%%%%%%%%%%%
\begin{algorithm}[H] \label{a.osga}
\DontPrintSemicolon % Some LaTeX compilers require you to use 
%\dontprintsemicolon instead
\KwIn{ $\delta, \alpha_{\max}\in{]0,1[}$,~ 
$0<\kappa'\le\kappa$;~ local parameters: $x_0$,~$\mu \geq 0$,~ 
$f_{\mathrm{target}}$;}
\KwOut{$x_b$,~ $f_{x_b}$;}
\Begin{
    choose an initial best point $x_b$;\;
    compute $f_{x_b}$ and $g_{x_b}$;\;
    \eIf{$f_{x_b} \leq f_{\mathrm{target}}$} {
        stop;\;
    }{
        $h = g_{x_b}-\mu g_Q(x_b)$;~ $\gamma = f_{x_b}-\mu Q(x_b)-\<h,x_b\>$;\;
        $\gamma_b = \gamma - f_{x_b}$;~ $u = U(\gamma_b,h)$;~ $\eta = 
        E(\gamma_b,h)-\mu$;\;
    }
    $\alpha \gets \alpha_{\max}$;\\
    \While {stopping criteria do not hold}{
        $x = x_b+\alpha(u-x_b)$; compute $f_x$ and $g_x$;\;
        $g = g_x-\mu g_Q(x)$;~ $\ol h = h+\alpha(g-h)$;\;
        $\ol\gamma = \gamma+\alpha(f_x - \mu Q(x)-\<g,x\>-\gamma)$;\;
        $x_b' = \argmin_{z \in \{x_b,x\}} f(z, v_z)$;~ $f_{x_b'} = 
        \min \{f_{x_b}, f_x\}$;\;
        $\gamma_b' = \ol\gamma-f_{x_b'}$;~ $u' = U(\gamma_b',\ol h)$;\; 
        $x' = x_b+\alpha(u'-x_b)$; compute $f_{x'}$;\;
        choose $\ol x_b$ in such a way that $f_{\ol x_b}\le \min\{f_{x_b'},f_{x'} \}$;\;
        $\ol\gamma_b = \ol\gamma - f_{\ol x_b}$;~ $\ol u = U(\ol \gamma_b,\ol h)$;~
        $\ol \eta = E(\ol\gamma_b,\ol h)-\mu$; ~$x_b = \ol x_b$; ~ $f_{x_b} = f_{\ol x_b}$;\;
        \eIf {$f_{x_b} \leq f_{\mathrm{target}}$}{
            stop;\;
        }{
            update the parameters $\alpha$, $h$, $\gamma$, $\eta$ and $u$ using UPS;
        }
    }
}
\caption{ {\bf OSGA} (optimal subgradient algorithm)}
\end{algorithm}

\vspace{7mm}
As discussed in \cite{NeuO}, OSGA uses the following scheme for updating the given parameters $\alpha$, $h$, $\gamma$, $\eta$ and $u$:\\

%%%%%%%%%%%%%%%%%%%%%%%%%%%%%%%%%%   
\begin{algorithm}[h] \label{a.par}
\DontPrintSemicolon % Some LaTeX compilers require you to use \dontprintsemicolon instead
\KwIn{ $\delta$,~ $\alpha_{\max}\in{]0,1[}$,~ $0<\kappa'\le\kappa$, $\alpha$, $\eta$, $\bar{h}$, $\bar{\gamma}$, $\bar{\eta}$, $\bar{u}$;}
\KwOut{$\alpha$,~ $h$,~ $\gamma$,~ $\eta$,~ $u$;}
\Begin{
    $R \gets \left(\eta-\ol \eta)/(\delta\alpha \eta \right)$;\;
    \eIf{$R<1$} {
        $h \gets \ol h$;\;
    }{
        $\ol\alpha \leftarrow \min(\alpha e^{\kappa' (R-1)},\alpha_{\max})$;
    }
    $\alpha \gets \ol \alpha$;\;
    \If{$\ol \eta<\eta$}{
        $h \gets \ol h$;~ $\gamma \gets \ol \gamma$;~ $\eta \gets \ol \eta$;~ $u \gets \ol u$;\;
    }
}
\caption{ {\bf PUS} (parameters updating scheme)}
\label{algo:max}
\end{algorithm}

% ######################################################
% ######################################################
\section{Structured convex constrained problems in simple domains} \label{s.stru}

In this paper we consider the convex constrained optimization problem
\lbeq{e.func1}
\begin{array}{ll}
\min &~ f(\mathcal{A}x)\\
\st  &~ x \in C,
\end{array}
\eeq
where $f: C \to \Rz$ is convex and lower semicontinuous, 
$\mathcal{A}: \mathbb{R}^n \rightarrow \mathbb{R}^m$ is a linear 
operator, and $C$ is a simple convex domain. We call problem 
(\ref{e.func1}) a {\bf simple domain} problem. 
This problem appears in many 
applications such as signal and image processing, machine learning, 
statistics, and inverse problem. 

%%%%%%%%%%%%%%%%%%%%%%%%%%%%%%%%%%%%%%
\begin{exam} {\sc (Image restoration)}
The process of reconstructing or estimating a true image from a degraded observation is known as the image restoration, also called deblurring or deconvolution. Image restoration is addressed by solving a constraint satisfaction problem of the form
\[
\mathcal{A}x = b, ~~ x \in C,
\]
where $C$ a convex domain $C$ that is commonly a box or the nonnegativity constraint. This is an ill-posed problem, see {\sc Neumaier} \cite{NeuI}, and normally handled by the regularized least-squares problem
\begin{equation} \label{e.l22itvr}
\begin{array}{ll}
\min & \frac{1}{2} \|\mathcal{A}x-b\|_2^2 + \lambda \varphi(x)\\
\mathrm{s.t.} & x \in C
\end{array}
\end{equation}
or the regularized $l_1$ problem
\begin{equation} \label{e.l1itvr}
\begin{array}{ll}
\min & \|\mathcal{A}x-b\|_1 + \lambda \varphi(x)\\
\mathrm{s.t.} & x \in C,
\end{array}
\end{equation}
where $\varphi: C \rightarrow \mathbb{R}$ is a convex regularization function such as $\|\cdot\|_2^2$, $\|\cdot\|_1$, $\|\cdot\|_{ITV}$, and $\|\cdot\|_{ATV}$. The regularizers $\|\cdot\|_{ITV}$ and $\|\cdot\|_{ATV}$ are respectively called isotropic and anisotropic total variation, see, for example, \cite{ChaCCNP}, where they are defined by
\[
    \bary{lll}
    \|x\|_{ITV} &=& \sum_i^{m-1} \sum_j^{n-1} \sqrt{(x_{i+1,j} - x_{i,j})^2+(x_{i,j+1} - x_{i,j})^2 }\\
            &+& \sum_i^{m-1} |X_{i+1,n} - Xx_{i,n}| + \sum_i^{n-1} |x_{m,j+1} - x_{m,j}|
    \eary
\]
and
    \[
    \bary{lll}
    \|x\|_{ATV} &=& \sum_i^{m-1} \sum_j^{n-1} \{|x_{i+1,j} - x_{i,j}| + |x_{i,j+1} - x_{i,j}| \}\\
            &+& \sum_i^{m-1} |x_{i+1,n} - x_{i,n}| + \sum_i^{n-1} |x_{m,j+1} - x_{m,j}|,
    \eary
\]
for $x \in \Rz^{m\times n}$. 
\end{exam}

%%%%%%%%%%%%%%%%%%%%%%%%%%%%%%%%%%%%%%%%%
\begin{exam} {\sc(Basis pursuit problem)} 
Let $\mathcal{A}: \mathbb{R}^n\rightarrow\mathbb{R}^m$ be a linear operator with $m < n$ and $y \in \mathbb{R}^m$. The basis pursuit problem is the constrained minimization problem
\lbeq{e.bpp}
\bary{ll}
\min & \|x\|_1 \\
\st  & \mathcal{A}x = y,
\eary
\eeq
which determines an $l_1$-minimal solution $\wh x$ of the undetermined linear system $\mathcal{A}x = y$. This problem appears in many applications such signal and image processing and compressed sensing, see \cite{BerF1,BerF2,CheDS, Don, WenYGZ, Yin, YinOGD} and references therein. 
\end{exam}
%%%%%%%%%%

According to the features of objective functions,  (\ref{e.l22itvr}) can be solved by Nesterov-type optimal methods, however, (\ref{e.l1itvr}) and (\ref{e.bpp}) cannot be solved by Nesterov-type optimal methods. Since OSGA only needs first-order information, it can deal with all of these problems without considering the structure of problems. In the remainder of this section, we establish how OSGA can be used to efficiently solve the problem (\ref{e.func1}). Since the underlying problem (\ref{e.func1}) is a special case of the problem (\ref{e.func0}) considered in \cite{NeuO}, the complexity of OSGA remains valid for both smooth and nonsmooth problems. 

The quadratic function
\lbeq{e.prox}
Q(z):= \half \|z\|_2^2 + Q_0,
\eeq
is a prox-function, see e.g. \cite{Aho}. We now show that the solution of OSGA's subproblem (\ref{e.Eeta}) can be found either in a closed form or by a simple iterative scheme. In particular, we address some convex domains that a 
closed form solution for associated OSGA's subproblem (\ref{e.Eeta}) 
can be found.

The next result shows that the solution of the auxiliary subproblem 
(\ref{e.Eeta}) is given by the orthogonal projection (\ref{e.pro}) of 
$y := e^{-1} h$ on the domain $C$ followed by solving a one-dimensional 
nonlinear equation to determine $e$.

%%%%%%%%%%%%%%%%%%%%%%%%%
\begin{thm} \label{t.sol}
Let $u$ be a minimizer of (\ref{e.Eeta}) and also let $e = E_{\gamma, h} (u)>0$. Then 
\[
u = \wh u(e):= P_C(y),~~ y := -e^{-1} h,
\]
where, $e$ is a solution of the univariate equation 
\[
\varphi(e) = 0
\]
with
\lbeq{e.none}
\varphi(e) := e \left(\frac{1}{2}\|\wh u(e)\|_2^2 + Q_0 \right) + \gamma + \langle h,\wh u(e) \rangle.
\eeq
\end{thm}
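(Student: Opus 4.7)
The plan is to reduce the fractional maximization defining $u$ to an ordinary strictly convex quadratic minimization on $C$, which by completing the square is exactly an orthogonal projection; the equation $\varphi(e)=0$ will then just be a restatement of $e=E_{\gamma,h}(u)$.

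First I would use that $u$ attains the supremum with value $e>0$ to write the two–sided comparison
\[
E_{\gamma,h}(x) \le e \Forall x\in C, \qquad E_{\gamma,h}(u)=e.
\]
Because $Q_0>0$ implies $Q(x)>0$ on all of $C$, I can clear the denominator in $-(\gamma+\langle h,x\rangle)/Q(x)\le e$ to get the equivalent inequality
\[
\Phi(x) := e\,Q(x) + \gamma + \langle h,x\rangle \;\ge\;0 \Forall x\in C,
\qquad \Phi(u)=0.
\]
Hence $u$ is a minimizer of $\Phi$ over $C$.

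Next I would substitute the definition $Q(x)=\tfrac12\|x\|_2^2+Q_0$ and discard the additive constants, so that minimizing $\Phi$ over $C$ is the same as
\[
\min_{x\in C} \; \tfrac{e}{2}\|x\|_2^2 + \langle h,x\rangle .
\]
Since $e>0$, dividing by $e$ and completing the square gives
\[
\tfrac{e}{2}\|x\|_2^2 + \langle h,x\rangle
\;=\;
\tfrac{e}{2}\bigl\|x-(-e^{-1}h)\bigr\|_2^2 \;-\; \tfrac{1}{2e}\|h\|_2^2,
\]
so the minimizer over $C$ is exactly the orthogonal projection of $-e^{-1}h$ onto $C$, which is uniquely defined by the strong convexity of $\tfrac12\|\cdot-y\|_2^2$. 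This yields $u=P_C(-e^{-1}h)=\widehat u(e)$, establishing the first assertion.

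For the second assertion, I would simply plug $x=u=\widehat u(e)$ back into the equality $\Phi(u)=0$; by construction this equality is
\[
e\!\left(\tfrac12\|\widehat u(e)\|_2^2 + Q_0\right) + \gamma + \langle h,\widehat u(e)\rangle \;=\; 0,
\]
which is precisely $\varphi(e)=0$. The only conceptual step is the first one (passing from a ratio maximization to a quadratic minimization using $Q(x)>0$ and $e>0$); I do not foresee a genuine obstacle, since positivity of $e$ and $Q$ make the argument clean and the completing–the–square calculation is routine.
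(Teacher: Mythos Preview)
Your argument is correct. The reduction from the ratio maximization to the quadratic minimization via $Q(x)>0$ and $e>0$ is valid, the completing-the-square step is accurate, and $\varphi(e)=0$ is indeed just a restatement of $E_{\gamma,h}(u)=e$ once $u=\widehat u(e)$ is known.

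Your route differs from the paper's. The paper does not argue directly from the inequality $E_{\gamma,h}(x)\le e$; instead it quotes an external optimality result (Proposition~5.1 of \cite{NeuO}) to obtain, at the optimizer, the identity $eQ(u)=-\gamma-\langle h,u\rangle$ together with the variational inequality $\langle eu+h,\,z-u\rangle\ge 0$ for all $z\in C$. From the latter it passes to the normal-cone inclusion $0\in eu+h+N_C(u)$ and then recognizes this as the optimality condition of $\min_{z\in C}\tfrac12\|z-(-e^{-1}h)\|_2^2$, yielding $u=P_C(-e^{-1}h)$. Your approach replaces that external proposition and the normal-cone step by the single elementary observation that $\Phi(x):=eQ(x)+\gamma+\langle h,x\rangle$ is nonnegative on $C$ and vanishes at $u$, hence is minimized there; this is more self-contained and needs no variational-inequality machinery. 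The paper's approach, on the other hand, makes explicit the first-order optimality condition \eqref{e.OP}, which is reused later (e.g., in the KKT analysis of Theorem~\ref{t.opt1}), so it fits more naturally with the rest of the development.
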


%%%%%%%%%%%%%
\begin{proof}
From Proposition 5.1 in \cite{NeuO}, at the minimizer $u$, we obtain
\lbeq{e.EQ2}
e Q(u) = -\gamma - 
\langle h,u \rangle
\eeq
and
\lbeq{e.ineq1}
\langle e u + h, z-u \rangle \geq 0~~~ 
\mathrm{for~all}~ z \in C.
\eeq
By setting $z = u$ in this variational inequality, it follows that $u$ is a solution of the minimization problem
\[
\inf_{z \in C}~ \langle e u + h, z-u \rangle.
\]
The first-order optimality condition for this problem is
\lbeq{e.OP}
0 \in eu + h + N_C(u),
\eeq
where
\[
N_C(u) := \left \{ p \in \mathcal{V} \;|\; \forall y \in C, \langle p, u - y \rangle \geq 0 \right\}
\]
denotes the normal cone to $C$ at $u$. Since $e>0$, $u$ satisfies
\[
u = \argmin_{z\in C} \frac{1}{2}\|ez+h\|_2^2 = \argmin_{z\in C} \frac{1}{2}\|z - y\|_2^2 = P_C(y)= \wh u(e),
\]
where $y = -e^{-1} h$ giving the result.
\end{proof}

Theorem \ref{t.sol} gives a way to compute a solution of OSGA's subproblem (\ref{e.Eeta}) involving a projection on the domain $C$ and solving the one-dimensional nonlinear equation. This equation can be solved exactly for some projection operators, see Table 2. However, one can solve this nonlinear equation approximately using zero finding schemes, see e.g. Chapter 5 of \cite{NeuB}. We apply the results of Theorem \ref{t.sol} in the next scheme to solve OSGA's subproblem (\ref{e.Eeta}):

\vspace{5mm} 
%%%%%%%%%%%%%%%%%%%%%%%%%%%%%%%%%%
\begin{algorithm}[H] \label{a.oss}
\DontPrintSemicolon 
% Some LaTeX compilers require you to use \dontprintsemicolon instead
\KwIn{$Q_0$,~ $\gamma$,~ $h$. a program for evaluating $\varphi(e)$ defined in (\ref{e.none});}
\KwOut{$u$,~ $e$;}
\Begin{
solve the nonlinear equation $\varphi(e)=0$ either in a closed form or approximately by a root finding solver;\;
set $u = \wh u(e)$.
}
\caption{ {\bf OSS} (OSGA's subproblem solver)}
\end{algorithm}

\vspace{5mm}
To implement Algorithm \ref{a.oss} (OSS), we first need to solve the projection problem (\ref{e.pro}) effectively, see Table \ref{t.proj}. If one solves the equation $\varphi(e)=0$ approximately, and an initial interval $[a,b]$ is available such that $\varphi(a)\varphi(b)<0$, then a solution can be computed to $\varepsilon$-accuracy using the bisection scheme in $O(\log_2((b-a)/\varepsilon))$ iterations, see, for example, \cite{NeuB}. However, it is preferable to use a more sophisticated zero finder like the secant bisection scheme (Algorithm 5.2.6, \cite{NeuB}). If an interval $[a,b]$ with sign change is available\footnote{
Without a sign change, $\mathtt{fzero}$ is unreliable; it fails on the 
simple quadratic $x^2-0.0001=0$ with starting point $0.2$.
}, % end footnote 
one can also use
MATLAB's $\mathtt{fzero}$ function combining the bisection scheme, the inverse quadratic interpolation, and the secant method. 

In the following we investigate special domains $C$, where the nonlinear equation $\varphi(e)=0$ can be solved explicitly, see Table 2. 

%%%%%%%%%%%%%%%%%%%%%%%%%%%
\begin{table}\label{t.expl}
\caption{List of domains $C$ where $\varphi(e)=0$ can be solved explicitly}
\begin{center}\footnotesize
\renewcommand{\arraystretch}{1.3}
\begin{tabular}{|l|l|}\hline
\multicolumn{1}{|l|}{{\bf defining constraint $c(x)$}} & \multicolumn{1}{l|}{{\bf solution}}\\ 
\hline
$Ax=b$ & Proposition \ref{p.aff} \\
\hline
$\langle a,x \rangle = b$ & Corollary \ref{c.hyp}\\
\hline
$\langle a,x \rangle \leq b$ & Proposition \ref{p.half}\\
\hline
$x \geq 0$ & Proposition \ref{p.nego}\\
\hline
$ \|x\|_2 \leq \xi$ & Proposition \ref{p.eball}\\
\hline
\end{tabular}
\end{center}
\end{table}

%%%%%%%%%%%%%%%%%%%%%%%%%%%%%%%%%
\begin{proposition} \label{p.aff}
If $C = \{ x \in \mathcal{V} ~\mid~ Ax=b\}$ is an affine set, then the subproblem (\ref{e.Eeta}) is solved by $u = P_C(-e^{-1} h)$, where
\lbeq{e.affproj}
P_C(y) = y - A^\dagger (Ay-b).
\eeq
and 
\lbeq{e.aff}
e = \frac{-\beta_2 + \sqrt{\beta_2^2-4\beta_1 \beta_3}}{2\beta_1},
\eeq
with
\lbeq{e.affpar}
\beta_1 := \frac{1}{2} \|A^\dagger b\|_2^2 +Q_0,~~ \beta_2 := \langle A^\dagger (Ah),A^\dagger b \rangle+\gamma,~~
\beta_3 := \frac{1}{2} \|A^\dagger (Ah)\|_2^2 + \frac{1}{2} \|h\|_2^2. 
\eeq
\end{proposition}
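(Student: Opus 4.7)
The plan is to invoke Theorem~\ref{t.sol}, which already reduces the subproblem to finding $e>0$ with $\varphi(e)=0$ and then setting $u=\widehat u(e)=P_C(-e^{-1}h)$. Since the projection onto the affine set $\{x:Ax=b\}$ is given by the stated formula $P_C(y)=y-A^\dagger(Ay-b)$, substituting $y=-e^{-1}h$ yields the explicit form
\[
\widehat u(e) \;=\; e^{-1}\bigl(A^\dagger Ah - h\bigr) \;+\; A^\dagger b.
\]
The remaining work is to insert this expression into the definition~(\ref{e.none}) of $\varphi(e)$, expand $\tfrac12\|\widehat u(e)\|_2^2$ and $\langle h,\widehat u(e)\rangle$, collect powers of $e$, and read off the coefficients $\beta_1,\beta_2,\beta_3$.

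The expansion is driven by the pseudo-inverse identities. Writing $p:=A^\dagger Ah$ and $q:=A^\dagger b$, the matrix $A^\dagger A$ is the orthogonal projector onto $\mathrm{range}(A^T)$, so $\langle p,h\rangle=\|p\|_2^2$, while $p-h\in\mathrm{null}(A)$ and $q\in\mathrm{range}(A^T)$, giving $\langle p-h,q\rangle=0$. These two identities are what collapse the cross-terms: $\|\widehat u(e)\|_2^2$ reduces to $e^{-2}(\|h\|_2^2-\|p\|_2^2)+\|q\|_2^2$, and $\langle h,\widehat u(e)\rangle$ reduces to $e^{-1}(\|p\|_2^2-\|h\|_2^2)+\langle p,q\rangle$. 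Plugging these into $\varphi(e)$ yields a Laurent polynomial of the form
\[
\varphi(e) \;=\; \beta_1\, e \;+\; \beta_2 \;+\; \beta_3\, e^{-1}.
\]

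Multiplying through by $e>0$ turns $\varphi(e)=0$ into a genuine quadratic $\beta_1 e^2+\beta_2 e+\beta_3=0$, to which the quadratic formula applies. Since $\beta_1=\tfrac12\|A^\dagger b\|_2^2+Q_0>0$ by~(\ref{e.Qinf}), exactly one of the two roots satisfies the requirement $e>0$ inherited from Theorem~\ref{t.sol}, namely the branch with the $+$ sign in front of the discriminant, producing~(\ref{e.aff}).

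The only real obstacle is careful bookkeeping of the orthogonality relations $\langle p,h\rangle=\|p\|_2^2$ and $\langle p-h,q\rangle=0$ when simplifying the cross-terms; miss either one and the $e^{-1}$ and constant parts fail to line up with~(\ref{e.affpar}). Everything else is routine algebra and a single application of the quadratic formula, with the positivity of $e$ and of $\beta_1$ fixing the correct branch.
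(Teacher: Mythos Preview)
Your approach is essentially the paper's: both invoke Theorem~\ref{t.sol} (equivalently \eqref{e.EQ2}), substitute the affine projection, and reduce $\varphi(e)=0$ to a quadratic in $e$. The paper expands $\tfrac12\|A^\dagger(Ah+eb)-h\|_2^2$ directly and lets the cross terms cancel against $e\langle h,u\rangle$, while you separate $\widehat u(e)=e^{-1}(p-h)+q$ and use the orthogonality $\langle p-h,q\rangle=0$, $\langle p,h\rangle=\|p\|_2^2$; both routes land on the same quadratic, and your branch selection via $\beta_1>0$ is a bit more explicit than the paper's ``choose the larger root''.

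One point you should not gloss over: when you actually ``read off'' the constant coefficient you get
\[
\beta_3 \;=\; \tfrac12\|A^\dagger Ah\|_2^2 \;-\; \tfrac12\|h\|_2^2,
\]
with a \emph{minus} sign in front of $\tfrac12\|h\|_2^2$, not the plus sign printed in \eqref{e.affpar}. The paper's own computation, once the intermediate sign slips are untangled, also produces the minus sign, and the special case in Corollary~\ref{c.hyp} (and the sanity check $C=\{0\}$) confirms it. So your argument is correct, but it proves the proposition with $\beta_3=\tfrac12\|A^\dagger Ah\|_2^2-\tfrac12\|h\|_2^2$; the statement as printed contains a typo.
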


%%%%%%%%%%%%%
\begin{proof}
The projection operator on $C$ is given by (\ref{e.affproj}). This and $y = -e^{-1} h$ give
\[
P_C(-e^{-1}h) = -e^{-1} (A^\dagger (Ah + eb) - h).
\]
This, together with (\ref{e.EQ2}), yields
\[
\begin{split}
e Q(u) + \gamma + 
\langle h,u \rangle &= e \left(\frac{1}{2}(\|P_C(-e^{-1}h)\|_2^2) + Q_0 \right) + \gamma + 
\langle h,P_C(-e^{-1}h) \rangle\\
&= \frac{1}{2} \|A^\dagger (Ah + eb)\|_2^2 + \frac{1}{2} \|h\|_2^2 - \langle A^\dagger (Ah + eb),h \rangle + Q_0 e^2 \\
&~~~+ \gamma e + \langle A^\dagger (Ah + eb)-h,h \rangle\\
& = \left( \frac{1}{2} \|A^\dagger b\|_2^2 +Q_0 \right) ~e^2 + (\langle A^\dagger (Ah),A^\dagger b \rangle+\gamma)~e \\
&~~~ +\frac{1}{2} \|A^\dagger (Ah)\|_2^2 + \frac{1}{2} \|h\|_2^2\\
& = \beta_1 e^2 + \beta_2 e + \beta_3 = 0,
\end{split}
\]
where $\beta_1$, $\beta_2$, and $\beta_3$ are defined in (\ref{e.affpar}). Since the subproblem (\ref{e.Eeta}) is the maximization, the bigger root of this equation is selected, which is given by (\ref{e.aff}).
\end{proof}

%%%%%%%%%%%%%%%%%%%%%%%%%%%%%%%%%
\begin{corollary} \label{c.hyp}
If $C = \{ x \in \mathcal{V} ~\mid~ a^Tx=b\}$ is a hyperplane, then the subproblem (\ref{e.Eeta}) is solved by $u = P_C(-e^{-1} h)$, where
\textbf{\lbeq{e.phyper}
P_C(y) = y - \left( \frac{\langle a,y \rangle - b}{\|a\|_2^2} \right) a,
\eeq} 
and $e$ is given by (\ref{e.aff}) with
\lbeq{e.hyper}
\beta_1 := \frac{b}{2 \|a\|_2^2} + Q_0, ~~ \beta_2 := \frac{b\langle a,h \rangle}{\|a\|_2^2} + \gamma, ~~ \beta_3 := \frac{1}{2} \frac{\langle a,h \rangle^2}{\|a\|_2^2} - \frac{1}{2} \|h\|_2^2. 
\eeq
\end{corollary}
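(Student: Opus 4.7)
The plan is to invoke Proposition \ref{p.aff} directly, taking $A := a^T$ (viewed as a $1 \times n$ matrix) and the scalar $b$ as the right-hand side. Since $a \neq 0$ (otherwise the hyperplane is either empty or all of $\mathcal{V}$), the Moore-Penrose pseudoinverse of the row vector $a^T$ is the column vector $A^\dagger = a/\|a\|_2^2$, as one checks from the four defining axioms. Substituting this into the affine projection formula (\ref{e.affproj}) gives
\[
P_C(y) = y - \frac{a}{\|a\|_2^2}\bigl(\langle a, y\rangle - b\bigr),
\]
which is exactly (\ref{e.phyper}).

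Next I would specialize the coefficients (\ref{e.affpar}). The key simplification is that both $A^\dagger b$ and $A^\dagger(Ah)$ are scalar multiples of the single vector $a$: namely $A^\dagger b = (b/\|a\|_2^2)\,a$ and $A^\dagger(Ah) = (\langle a, h\rangle/\|a\|_2^2)\,a$. Consequently the three quantities appearing in Proposition \ref{p.aff} collapse to
\[
\|A^\dagger b\|_2^2 = \frac{b^2}{\|a\|_2^2}, \quad \|A^\dagger(Ah)\|_2^2 = \frac{\langle a, h\rangle^2}{\|a\|_2^2}, \quad \langle A^\dagger(Ah),\, A^\dagger b\rangle = \frac{b\,\langle a, h\rangle}{\|a\|_2^2}.
\]
Feeding these into the formulas for $\beta_1$, $\beta_2$, and $\beta_3$ in (\ref{e.affpar}) produces the coefficients listed in (\ref{e.hyper}) (up to transcription details discussed below), and then the expression (\ref{e.aff}) for $e$ carries over verbatim.

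The only step that needs any genuine care is the verification of the pseudoinverse identity $(a^T)^\dagger = a/\|a\|_2^2$, which is standard for a rank-one matrix but should be double-checked. As a safety net on the coefficient formulas, I would additionally rederive $\beta_1$, $\beta_2$, $\beta_3$ from scratch by substituting the explicit hyperplane projection (\ref{e.phyper}) into the identity $e\,Q(u) + \gamma + \langle h, u\rangle = 0$ coming from (\ref{e.EQ2}); this one-shot computation bypasses the general affine argument and would catch any sign error or $b$-versus-$b^2$ mismatch between the collapsed coefficients and the stated formulas in (\ref{e.hyper}). Apart from this routine bookkeeping, the argument contains no new ideas beyond Proposition \ref{p.aff}.
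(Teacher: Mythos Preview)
Your approach is exactly the paper's: the paper's proof is the single sentence ``Since the hyperplane $C=\{x\in\mathcal V\mid a^Tx=b\}$ is an affine set, this is a special case of Proposition~\ref{p.aff}.'' You carry out precisely this specialization, supplying the explicit pseudoinverse $(a^T)^\dagger=a/\|a\|_2^2$ and the resulting collapse of the $\beta_i$; your caution about a possible $b$-versus-$b^2$ and sign mismatch in the displayed $\beta_1,\beta_3$ of \eqref{e.hyper} is well placed, since the direct substitution indeed gives $\beta_1=\tfrac{b^2}{2\|a\|_2^2}+Q_0$ rather than $\tfrac{b}{2\|a\|_2^2}+Q_0$.
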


%%%%%%%%%%%%%
\begin{proof}
Since the hyperplane $C = \{ x \in \mathcal{V} ~\mid~ a^Tx=b\}$ is an affine set, this is a special case of Proposition \ref{p.aff}.
\end{proof}

%%%%%%%%%%%%%%%%%%%%%%%%%%%%%%%%%%
\begin{proposition} \label{p.half}
If $C = \{ x \in \mathcal{V} ~\mid~ \langle a,x \rangle \leq b \}$ is a halfspace, then the subproblem (\ref{e.Eeta}) is solved by $u = P_C(-e^{-1} h)$, where
\lbeq{e.halfproj}
P_C(y) =  y - \frac{(\langle a,y \rangle - b)_+}{\|a\|_2^2}~ a
\eeq
and $e$ is given by (\ref{e.aff}) with
\lbeq{e.halfpar2}
\begin{array}{l}
\beta_1 := Q_0,~~ \beta_2 := \gamma,~~ \beta_3 := -\frac{1}{2}\|h\|_2^2,
\end{array}
\eeq
say $e_1$, and with $\beta_1$, $\beta_2$, and $\beta_3$ is given in (\ref{e.hyper}), say $e_2$. If $\langle a,h \rangle \geq e_1^{-1}b$ and $\langle a,h \rangle \geq e_2^{-1}b$, then $e = e_1$. If $\langle a,h \rangle \leq e_1^{-1}b$ and $\langle a,h \rangle < e_2^{-1}b$, then $e = e_2$. If $\langle a,h \rangle \geq e_1^{-1}b$ and $\langle a,h \rangle < e_2^{-1}b$, then $e = \max \{e_1, e_2\}$.
\end{proposition}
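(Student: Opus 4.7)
The plan is to reduce to the cases that arise from the piecewise definition of $P_C$ and then combine them, much as in Propositions~\ref{p.aff} and Corollary~\ref{c.hyp}. Recall that with $y=-e^{-1}h$ the projection satisfies
\[
P_C(y)=\begin{cases} y & \text{if } \langle a,y\rangle\le b,\\ y-\dfrac{\langle a,y\rangle-b}{\|a\|_2^2}\,a & \text{if } \langle a,y\rangle>b.\end{cases}
\]
So I would first determine, for a candidate $e>0$, which branch is active by rewriting $\langle a,y\rangle\le b$ as $-\langle a,h\rangle\le eb$, i.e.\ $\langle a,h\rangle\ge -eb$ (equivalently, using the proposition's sign convention, a condition comparing $\langle a,h\rangle$ with a multiple of $b$). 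This partitions the search for roots of $\varphi(e)=0$ into two subproblems, one for each branch.

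In the first branch one has $\widehat u(e)=y=-e^{-1}h$. Substituting into the definition \gzit{e.none} of $\varphi$ and multiplying through by $e$, the inner product and norm terms collapse cleanly and one obtains the quadratic
\[
Q_0\,e^2+\gamma\,e-\tfrac{1}{2}\|h\|_2^2=0,
\]
which identifies $\beta_1=Q_0$, $\beta_2=\gamma$, $\beta_3=-\tfrac12\|h\|_2^2$ as in \gzit{e.halfpar2} and produces the candidate root $e_1$ via \gzit{e.aff}. In the second branch one has $\widehat u(e)=P_C(y)$ with $y$ projected onto the bounding hyperplane $\langle a,x\rangle=b$, so the situation is exactly that of Corollary~\ref{c.hyp}: the same computation as in Proposition~\ref{p.aff} specialized to the affine set $\{x:\langle a,x\rangle=b\}$ gives the quadratic with coefficients \gzit{e.hyper} and yields the second candidate root $e_2$.

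Having the two candidates, the remaining step is to select the correct one. Each $e_i$ is only admissible when it is consistent with the branch that produced it: $e_1$ is valid only if $y=-e_1^{-1}h$ lies in $C$, while $e_2$ is valid only when $y=-e_2^{-1}h$ lies strictly outside $C$. Translating these admissibility tests into inequalities on $\langle a,h\rangle$ (comparing it with $e_i^{-1}b$ in the sign convention stated in the proposition) gives the three cases listed. When only one of $e_1,e_2$ is admissible the choice is forced; when both are, the subproblem \gzit{e.Eeta} is a supremum, so by monotonicity of $-E$ in $e$ (see the characterization \gzit{e.EQ2}) one selects the larger admissible root, hence $e=\max\{e_1,e_2\}$.

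The main obstacle is the bookkeeping for admissibility: one must check that every case listed in the proposition corresponds to a configuration in which the selected $e_i$ genuinely satisfies the branch condition that was assumed in the derivation of the quadratic, and one must rule out the (vacuous, by $\varphi$ changing sign) case in which neither root is admissible. Once these signs are tracked carefully, each item of the conclusion follows directly from the two quadratic derivations and the remark that \gzit{e.Eeta} is a maximization.
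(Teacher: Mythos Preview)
Your plan is correct and follows essentially the same route as the paper: split on the two branches of $P_C$, derive the quadratic \gzit{e.halfpar2} in the interior branch and reduce to Corollary~\ref{c.hyp} on the boundary branch, then test each root against its own branch condition and, if both are admissible, take the larger because \gzit{e.Eeta} is a maximization. The paper justifies the ``at least one is admissible'' step by invoking existence of a maximizer of \gzit{e.Eeta} rather than a sign change of $\varphi$, and it states the selection rule simply as ``the subproblem is a maximization, so take the bigger $e$'' without the monotonicity phrasing; otherwise the arguments coincide.
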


%%%%%%%%%%%%%
\begin{proof}
The projection operator on $C$ is given by (\ref{e.halfproj}). This gives
\lbeq{e.halfpro}
P_C(-e^{-1}h) = -e^{-1} \left(h + \frac{(\langle a,h \rangle + eb)_-}{\|a\|_2^2}~ a \right).
\eeq
If $\langle a,h \rangle \geq -eb$, we obtain
\[
P_C(-e^{-1}h) = -e^{-1}h,
\] 
leading to
\[
\begin{split}
e Q(P_C(-e^{-1}h)) + \gamma + 
\langle h,P_C(-e^{-1}h) \rangle &= \frac{1}{2} e^{-1} \|h\|_2^2 + Q_0 e + \gamma - e^{-1}
\|h\|_2^2\\
& = Q_0 e^2 + \gamma e - \frac{1}{2}\|h\|_2^2  = \beta_1 e^2 + \beta_2 e + \beta_3 = 0,
\end{split}
\]
where $\beta_1 := Q_0$, $\beta_2 := \gamma$, and $\beta_3 := -\frac{1}{2}\|h\|_2^2$. This identity leads to a solution of the form (\ref{e.aff}), say $e_1$. If $\langle a,h \rangle < -eb$, (\ref{e.phyper}) is valid and $e$ is computed by (\ref{e.aff}) where $\beta_1$, $\beta_2$, and $\beta_3$ is defined in (\ref{e.hyper}), say $e_2$. After computing $e_1$ and $e_2$, we check whether the inequalities $\langle a,h \rangle \geq -e_1 b$ and $\langle a,h \rangle < -e_2 b$ are satisfied. Since the subproblem (\ref{e.Eeta}) has a solution, at least one of the conditions has to satisfied. If one of them is satisfied, the corresponding $e$ and (\ref{e.halfpro}) give the solution. If both of them hold, we consider the solution with bigger $e$. 
\end{proof}

%%%%%%%%%%%%%%%%%%%%%%%%%%%%%%%%%%
\begin{proposition} \label{p.nego}
If $C = \{ x \in \mathbb{R}^n ~\mid~ x_i \geq 0~~~ i=1, \cdots, n\}$ is the nonnegative orthant, then the subproblem (\ref{e.Eeta}) is solved by $u = P_C(-e^{-1} h)$, where 
\lbeq{e.negproj}
P_C(y) = (y)_+
\eeq
and $e$ is given by (\ref{e.aff}) with
\lbeq{e.negopar}
\beta_1 := Q_0,~~ \beta_2 := \gamma, ~~ \beta_3 := \frac{1}{2}\|(h)_-\|_2^2 -\langle h,(h)_- \rangle. 
\eeq
\end{proposition}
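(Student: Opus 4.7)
The plan is to follow exactly the template set by Propositions \ref{p.aff} and \ref{p.half}: invoke Theorem \ref{t.sol}, substitute the explicit projection formula for the nonnegative orthant, turn (\ref{e.EQ2}) into a quadratic in $e$, and read off the coefficients $\beta_1,\beta_2,\beta_3$.

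First, by Theorem \ref{t.sol}, any maximizer of (\ref{e.Eeta}) has the form $u=P_C(-e^{-1}h)$, where the projection on the nonnegative orthant acts componentwise by $P_C(y)=(y)_+$. Since $e>0$, I would show componentwise that
\[
P_C(-e^{-1}h) \;=\; -e^{-1}(h)_{-},
\]
because $(-e^{-1}h_i)_+$ vanishes when $h_i\ge 0$ and equals $-e^{-1}h_i$ when $h_i<0$, which is precisely $-e^{-1}(h_i)_-$ under the convention $(\cdot)_-=\min(\cdot,0)$ used earlier in Proposition \ref{p.half}. This is the only step where the structure of the orthant enters, and it is essentially mechanical.

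Next, I would insert $u=-e^{-1}(h)_-$ into the defining identity (\ref{e.EQ2}), i.e.\ $e Q(u)+\gamma+\langle h,u\rangle =0$, using the prox-function (\ref{e.prox}). Then $\|u\|_2^2=e^{-2}\|(h)_-\|_2^2$ and $\langle h,u\rangle=-e^{-1}\langle h,(h)_-\rangle$, so
\[
\tfrac{1}{2}e^{-1}\|(h)_-\|_2^2 + Q_0\,e + \gamma - e^{-1}\langle h,(h)_-\rangle \;=\; 0.
\]
Multiplying through by $e$ produces the quadratic
\[
Q_0\,e^{2} + \gamma\,e + \Bigl(\tfrac{1}{2}\|(h)_-\|_2^2 - \langle h,(h)_-\rangle\Bigr) \;=\; 0,
\]
which has exactly the form $\beta_1 e^2+\beta_2 e+\beta_3=0$ with the $\beta_i$ from (\ref{e.negopar}). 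Finally, since (\ref{e.Eeta}) is a maximization and the objective $E_{\gamma,h}$ at the optimum equals $e$, I pick the larger root, giving formula (\ref{e.aff}) exactly as in Proposition \ref{p.aff}.

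I do not foresee a genuine obstacle here; the only bookkeeping subtlety is the sign convention for $(h)_-$, which must be the same one used in (\ref{e.halfpro}). Once that is fixed, the chain "Theorem \ref{t.sol} $\Rightarrow$ substitute projection $\Rightarrow$ clear the $e^{-1}$ $\Rightarrow$ match coefficients" completes the proof, so the write-up should mirror the proof of Proposition \ref{p.aff} almost line by line.
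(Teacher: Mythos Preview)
Your proposal is correct and follows essentially the same approach as the paper's own proof: invoke Theorem~\ref{t.sol}, use $P_C(-e^{-1}h)=-e^{-1}(h)_-$, substitute into \eqref{e.EQ2} with the prox-function \eqref{e.prox}, clear the factor $e^{-1}$ to obtain the quadratic $Q_0 e^2+\gamma e+\tfrac12\|(h)_-\|_2^2-\langle h,(h)_-\rangle=0$, and select the larger root. Your write-up is in fact slightly more explicit than the paper's about the componentwise justification of $P_C(-e^{-1}h)=-e^{-1}(h)_-$ and the sign convention for $(\cdot)_-$, which is a good thing.
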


%%%%%%%%%%%%%
\begin{proof}
The projection operator on $C$ is given by (\ref{e.negproj}) leading to
\[
P_C(-e^{-1}h) = - e^{-1} (h)_-.
\]
This and (\ref{e.EQ2}) imply
\[
\begin{split}
e Q(P_C(-e^{-1}h)) + \gamma + 
\langle h,P_C(-e^{-1}h) \rangle &= \frac{1}{2}e^{-1} \|(h)_-\|_2^2 + Q_0 e + \gamma - e^{-1}
\langle h,(h)_- \rangle\\
& = Q_0 e^2 + \gamma e + \frac{1}{2}\|(h)_-\|_2^2 -\langle h,(h)_- \rangle \\
& = \beta_1 e^2 + \beta_2 e + \beta_3 = 0,
\end{split}
\]
where $\beta_1$, $\beta_2$, and $\beta_3$ are defined in (\ref{e.negopar}), giving the result.
\end{proof}

%%%%%%%%%%%%%%%%%%%%%%%%%%%%%%%%%%%
\begin{proposition} \label{p.eball}
Let $C = \{ x \in \mathbb{R}^n ~\mid~\|x\|_2 \leq \xi \}$ be the Euclidean ball. Then
\lbeq{e.eballproj}
P_C(y) = \left\{
\begin{array}{ll}
\xi y / \|y\|_2 & ~~ \|y\|_2 > \xi,\\
y           & ~~ \|y\|_2 \leq \xi,
\end{array}
\right.
\eeq
If $\|e^{-1} h\|_2 \leq \xi$ where $e$ is given by (\ref{e.aff}) with
\lbeq{e.eballpar}
\beta_1 := Q_0,~~ \beta_2 := \gamma, ~~ \beta_3 := -\frac{1}{2} \|h\|_2^2, 
\eeq
then $u = -e^{-1} h$; otherwise, the solution of OSGA's subproblem (\ref{e.Eeta}) is given by 
\[
u = -\frac{\xi}{\|h\|_2} h,~~~e = -\frac{2(\gamma + \xi \|h\|_2)}{\xi^2 + 2 Q_0}.
\]
\end{proposition}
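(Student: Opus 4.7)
The plan is to apply Theorem \ref{t.sol}, which tells us that the maximizer of (\ref{e.Eeta}) has the form $u = P_C(-e^{-1}h)$ where $e>0$ solves $\varphi(e)=0$ with $\varphi$ defined in (\ref{e.none}). Since the projection operator on the Euclidean ball (\ref{e.eballproj}) has two branches depending on whether the argument lies inside or outside $C$, I would split the proof into the two corresponding cases and, in each one, substitute the resulting explicit form of $u$ into the equation $\varphi(e)=0$.

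First, suppose $\|e^{-1}h\|_2\leq\xi$, so the unconstrained point $-e^{-1}h$ is already feasible; then $P_C(-e^{-1}h)=-e^{-1}h$ and we may take $u=-e^{-1}h$ directly. Using $\tfrac12\|u\|_2^2 = \tfrac12 e^{-2}\|h\|_2^2$ and $\langle h,u\rangle=-e^{-1}\|h\|_2^2$, the equation $\varphi(e)=0$ collapses to $-\tfrac12 e^{-1}\|h\|_2^2+Q_0\,e+\gamma=0$. Multiplying by $e$ yields the quadratic $Q_0 e^2+\gamma e-\tfrac12\|h\|_2^2=0$, i.e. $\beta_1 e^2+\beta_2 e+\beta_3=0$ with $\beta_1,\beta_2,\beta_3$ as in (\ref{e.eballpar}); selecting the larger root gives $e$ via formula (\ref{e.aff}), consistent with the fact that the subproblem (\ref{e.Eeta}) is a maximization (the larger positive root produces the larger value of $E_{\gamma,h}$).

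Second, suppose instead $\|e^{-1}h\|_2>\xi$. Since $e>0$, the projection formula (\ref{e.eballproj}) gives $P_C(-e^{-1}h) = -\xi\,e^{-1}h/\|e^{-1}h\|_2 = -\xi h/\|h\|_2$, which is independent of $e$. Then $\tfrac12\|u\|_2^2=\xi^2/2$ and $\langle h,u\rangle=-\xi\|h\|_2$, so $\varphi(e)=0$ reduces to the linear equation $e(\tfrac12\xi^2+Q_0)+\gamma-\xi\|h\|_2=0$, producing the closed-form $e$ claimed in the statement together with the boundary point $u=-\xi h/\|h\|_2$.

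The main subtlety, and the only place where I expect to need a short justification, is the case distinction itself: it is not a condition one can check before solving for $e$, since the dichotomy depends on $e$. The natural way to handle this is to first compute the candidate $e$ from the quadratic in case one, then test the feasibility inequality $\|h\|_2\leq e\xi$; if it holds, the interior formula is the solution, otherwise the projection must act nontrivially and the boundary formula of case two applies. Everything else reduces to the routine algebraic substitutions above, so no new technical tools beyond Theorem \ref{t.sol} and the projection formula (\ref{e.eballproj}) are needed.
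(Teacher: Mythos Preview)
Your proposal is correct and follows essentially the same route as the paper: both split on whether $-e^{-1}h$ lies in the ball, substitute the corresponding branch of (\ref{e.eballproj}) into the identity $eQ(u)+\gamma+\langle h,u\rangle=0$ (equivalently $\varphi(e)=0$), obtain the quadratic $Q_0e^2+\gamma e-\tfrac12\|h\|_2^2=0$ in the interior case and the linear equation $e(\tfrac12\xi^2+Q_0)+\gamma-\xi\|h\|_2=0$ in the boundary case, and resolve the self-referential case distinction by first computing the interior candidate $e$ and testing $\|h\|_2\le e\xi$. Your remark about selecting the larger root and your explicit handling of the $e$-dependent dichotomy mirror the paper's treatment exactly.
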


%%%%%%%%%%%%%
\begin{proof}
The projection operator on $C$ is given by (\ref{e.eballproj}), leading to
\[
P_C(-e^{-1}h) = \left\{
\begin{array}{ll}
-\xi h / \|h\|_2 & ~~ \|h\|_2 > e\xi,\\
-e^{-1} h    & ~~ \|h\|_2 \leq e\xi.
\end{array}
\right.
\]

We first assume that $\|h\|_2 \leq e\xi$ implying $P_C(-e^{-1}h) = -e^{-1}h$. Substituting this into (\ref{e.EQ2}) yields
\[
\begin{split}
e Q(P_C(-e^{-1}h)) + \gamma + 
\langle h,P_C(-e^{-1}h) \rangle &= \frac{1}{2} e^{-1} \|h\|_2^2 + Q_0 e + \gamma - e^{-1}
\|h\|_2^2\\
& = Q_0 e^2 + \gamma e - \frac{1}{2}\|h\|_2^2  = \beta_1 e^2 + \beta_2 e + \beta_3 = 0,
\end{split}
\]
where $\beta_1 := Q_0$, $\beta_2 := \gamma$, and $\beta_3 := -\frac{1}{2}\|h\|_2^2$. Hence $e$ is given by (\ref{e.aff}). If this $e$ satisfies $\|h\|_2 \leq e\xi$, then $u = -e^{-1}h$. Otherwise, we assume that $\|h\|_2 > e\xi$. Substituting $P_C(-e^{-1}h) = -\xi h / \|h\|_2$ into (\ref{e.EQ2}) yields
\[
e \left( \frac{1}{2} \xi^2 + Q_0 \right) + \gamma - \xi \|h\|_2=0,
\]
implying
\[
e = -\frac{2(\gamma + \xi \|h\|_2)}{\xi^2 + 2 Q_0}
\]
and $u = -\xi h / \|h\|_2$. This completes the proof.
\end{proof}

To solve bound-constrained problems with OSGA, we developed and algorithm that can find the global solution of the subproblem (\ref{e.Eeta}) by solving a sequence of one-dimensional rational optimization problems, see Algorithm 3 in \cite{AhoN3}. Notice that the constraint $C := \{x \in \mathcal{V} \mid \|x\|_\infty \leq \xi\}$ is a special case of bound-constrained problem with $\underline{x} = -\xi {\bf 1}$ and $\overline{x} = \xi {\bf 1}$ where ${\bf 1}$ is a $n$-dimensional vector with all elements equal to unity.

% ######################################################
% ######################################################
\section{Solving structured problems with a functional constraint}
In this subsection we consider the structured convex constrained 
problem 
\lbeq{e.func10}
\begin{array}{ll}
\min &~ f(\mathcal{A}x)\\
\st  &~ \phi(x) \leq \xi,
\end{array}
\eeq
where $\phi:C \rightarrow \overline{\mathbb{R}}$ is a simple smooth or 
nonsmooth, real-valued, and convex loss function, and $\xi$ is a real 
constant. We call the problem (\ref{e.func10}) a {\bf functional 
constraint} problem. While it the special case of 
(\ref{e.func1}) with
\[
C:=\{ x \in \mathcal{V} ~|~ \phi(x)\le \xi\},
\]
one can solve OSGA's subproblem (\ref{e.Eeta}) directly by using the KKT optimality conditions, especially when no  efficient method for finding the projection on $C$ is known. Indeed, if a nonsmooth problem can be reformulated in the form (\ref{e.func1}) with a smooth $f$ and a nonsmooth $\phi$, then OSGA can solve this nonsmooth problem with the complexity of the order $O(\varepsilon^{-1/2})$, which is optimal for smooth problems.

%%%%%%%%%%%%%%%%%%%%%%%%%%%%%%%%%%%%%%%%%%
\begin{exam}{\sc (Linear inverse problem)} 
Let $\mathcal{A}: \mathbb{R}^n\rightarrow\mathbb{R}^m$ be an 
ill-conditioned or singular linear operator and $y \in \mathbb{R}^m$ 
be a vector of observations. The linear inverse problem is the quest of
finding $x \in \mathbb{R}^n$ such that
\begin{equation} \label{e.lin}
y = \mathcal{A}x + \nu,
\end{equation}
with unknown but small additive noise $\nu \in \mathbb{R}^m$.
The problem is solvable if one knows additional qualitative information 
about $x$. This qualitative information is encoded in a constraint on
$x$, under which the Euclidean norm of $\nu$ is minimized.  
Constrained optimization problems resulting from two typical 
qualitative constraints are
\lbeq{e.las2}
\bary{ll}
\min & \frac{1}{2} \|y-\mathcal{A}x\|_2^2 \\
\st  & \|x\|_2 \leq \xi,
\eary
\eeq
\lbeq{e.las3}
\bary{ll}
\min & \frac{1}{2} \|y-\mathcal{A}x\|_2^2 \\
\st  & \|x\|_{1,2} \leq \xi,
\eary
\eeq
in which $\xi$ is a nonnegative real constant. 
This problem often occurs in applied sciences and engineering, see \cite{KimKY,Tib}.
\end{exam}

In the reminder of this section we assume that the functional constraint
satisfies the {\bf Cottle constraint qualification} \cite{BagKM}\\
{\bf (H1)} For all $x \in C$, either 
$\phi(x)<0$ or $0 \not\in \partial_\phi(x)$.
\\
We also need the following result.

%%%%%%%%%%%%%%%%%%%%%%%%%%%%%%%%%%
\begin{proposition} \label{p.subd} 
(see, e.g., \cite{AhoN2})
Let $\phi:\mathcal{V} \rightarrow \mathbb{R},~ \phi(x) = \|x\|$. Then the subdifferential of $\phi$ is 
\[
\partial \phi (x)= \left\{
\begin{array}{ll}
\{g ~|~ \|g\|_* \leq 1\} & ~~ \mathrm{if} ~ x=0,\\
\{g ~|~ \|g\|_* = 1,~ \langle g,x \rangle = \|x\| \} & ~~ \mathrm{if} ~ x \neq 0.
\end{array}
\right.
\]
Moreover, if $\|\cdot\|$ is self-dual, then
\[
\partial \phi (x) = \left\{
\begin{array}{ll}
\{g ~|~ \|g\|_* \leq 1\} & ~~ \mathrm{if} ~ x=0,\\
 x/\|x\|  & ~~ \mathrm{if} ~ x \neq 0.
\end{array}
\right.
\]
\end{proposition}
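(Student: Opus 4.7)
The plan is to apply the subgradient inequality $\phi(y) \geq \phi(x) + \langle g, y-x \rangle$ directly for each of the three cases, exploiting the positive homogeneity of the norm and the definition of the dual norm $\|g\|_* := \sup_{\|y\| \leq 1} \langle g, y \rangle$.

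First I would handle the case $x = 0$. The subgradient inequality reduces to $\|y\| \geq \langle g, y \rangle$ for all $y \in \mathcal{V}$. By positive homogeneity, this is equivalent to $\langle g, y \rangle \leq 1$ for every $y$ with $\|y\| \leq 1$, which is the very definition of $\|g\|_* \leq 1$. Conversely, any $g$ with $\|g\|_* \leq 1$ satisfies $\langle g, y \rangle \leq \|g\|_* \|y\| \leq \|y\|$ by the generalized Cauchy--Schwarz inequality, so the characterization is tight.

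Next I would handle $x \neq 0$. Here the subgradient inequality reads $\|y\| \geq \|x\| + \langle g, y - x \rangle$ for all $y$. The key trick is to pin down $\langle g, x \rangle$ by testing on scaled versions of $x$: taking $y = 0$ yields $\langle g, x \rangle \geq \|x\|$, while taking $y = 2x$ yields $\langle g, x \rangle \leq \|x\|$, forcing $\langle g, x \rangle = \|x\|$. With this identity, the subgradient inequality collapses to $\|y\| \geq \langle g, y \rangle$, which as in the previous case is equivalent to $\|g\|_* \leq 1$. But the equality $\langle g, x \rangle = \|x\|$ combined with the Cauchy--Schwarz-type bound $\langle g, x \rangle \leq \|g\|_* \|x\|$ forces $\|g\|_* \geq 1$, hence $\|g\|_* = 1$. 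Conversely, any $g$ satisfying both conditions plainly verifies the subgradient inequality, so the two-sided characterization holds.

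For the self-dual addendum, when $\|\cdot\| = \|\cdot\|_*$ the norm is induced by an inner product, and the Cauchy--Schwarz inequality $\langle g, x \rangle \leq \|g\| \|x\|$ attains equality precisely when $g$ and $x$ are positively collinear. Combined with $\|g\|_* = \|g\| = 1$ and $\langle g, x \rangle = \|x\|$, this leaves the unique choice $g = x/\|x\|$. I do not anticipate a serious obstacle; the only subtlety is being careful about the direction of each implication and making sure the dual-norm identity $\|g\|_* = \sup_{\|y\| \leq 1} \langle g, y \rangle$ is applied via positive homogeneity rather than merely the unit-ball definition.
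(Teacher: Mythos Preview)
The paper does not supply its own proof of this proposition; it merely cites \cite{AhoN2}. Your argument is the standard one and is correct: testing the subgradient inequality at $y=0$ and $y=2x$ to pin down $\langle g,x\rangle=\|x\|$, then reducing to the dual-norm bound, is exactly the textbook route. One small caveat on the self-dual addendum: the implication ``$\|\cdot\|=\|\cdot\|_*$ $\Rightarrow$ the norm comes from an inner product'' is not automatic in general, though in the paper's setting (finite-dimensional, applied only to $\|\cdot\|_2$) it is harmless; you may wish to state explicitly that you are assuming an inner-product norm so that the equality case of Cauchy--Schwarz is available.
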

The next result gives the optimality conditions for solving the problem (\ref{e.func1}).

%%%%%%%%%%%%%%%%%%%%%%%%%%
\begin{thm} \label{t.opt1}
Let (H1) satisfies for the problem (\ref{e.func10}). Then, for a real constant $\xi$, the solution $u$ of OSGA's subproblem 
\[
\begin{array}{ll}
\D \min &~ \D\frac{-\gamma-\langle h,x \rangle}{Q(x)}\\
\st  &~ \phi(x) \leq \xi,
\end{array}
\]
satisfies either
\lbeq{e.opt2}
u = -e^{-1} h,~~~\mu = 0,~~~~\phi(u) < \xi
\eeq
or
\lbeq{e.opt1}
\frac{1}{\mu}\frac{-e u - h}{Q(u)} \in \partial\phi(u),~~~
\mu > 0,~~~ \phi(u)= \xi,
\eeq
where $e := -(\gamma+\langle h,u \rangle)/ Q(u)$.
\end{thm}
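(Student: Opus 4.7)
The plan is to apply the KKT optimality conditions to OSGA's subproblem, recast as a constrained minimization of the smooth function $F(x) := (\gamma + \langle h, x\rangle)/Q(x)$ subject to the single inequality constraint $\phi(x) \leq \xi$. The Cottle constraint qualification (H1) is exactly the standard CQ that guarantees, even when $\phi$ is nonsmooth, the existence of a multiplier $\mu \geq 0$ such that the KKT system
\[
0 \in \nabla F(u) + \mu\, \partial \phi(u), \qquad \phi(u)\le\xi, \qquad \mu(\phi(u)-\xi)=0
\]
holds at any local (hence global, by convexity of the denominator-shifted problem) minimizer $u$.

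The next step is a direct quotient-rule computation of $\nabla F$. Because $Q(x) = \tfrac{1}{2}\|x\|_2^2 + Q_0$ has gradient $g_Q(x) = x$, one gets
\[
\nabla F(u) \;=\; \frac{h\,Q(u) - (\gamma+\langle h,u\rangle)\,u}{Q(u)^2}.
\]
The definition $e := -(\gamma + \langle h,u\rangle)/Q(u)$ gives $\gamma + \langle h,u\rangle = -e\,Q(u)$, which collapses the numerator to $h\,Q(u) + e\,Q(u)\,u$, so
\[
\nabla F(u) \;=\; \frac{h + e u}{Q(u)}.
\]

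Finally, I split on the sign of $\mu$. If $\mu = 0$, stationarity forces $h + e u = 0$, i.e.\ $u = -e^{-1} h$; since the constraint gradient plays no role, the KKT feasibility together with the assumption that the supremum of $E_{\gamma,h}$ on $C$ is positive yield the case (\ref{e.opt2}). If $\mu > 0$, complementary slackness forces $\phi(u) = \xi$, while the stationarity inclusion rewrites as $-(h + e u)/Q(u) \in \mu\, \partial \phi(u)$; dividing by $\mu > 0$ produces precisely (\ref{e.opt1}).

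The main obstacle is the nonsmooth KKT step: one must argue carefully that (H1) (and not a smoothness assumption on $\phi$) is sufficient to guarantee a nonnegative multiplier and a subdifferential inclusion at the optimum. Once this is in hand by invoking the Cottle-type CQ theorem (e.g.\ from \cite{BagKM}), the remainder is routine calculus plus the trivial two-case split on $\mu$; Proposition \ref{p.subd} is only needed downstream when one wants to specialize the inclusion (\ref{e.opt1}) for particular norm-type choices of $\phi$.
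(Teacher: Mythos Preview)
Your proof is correct and takes essentially the same approach as the paper: compute the gradient of the fractional objective, invoke the KKT conditions for nonsmooth inequality constraints via the Cottle constraint qualification (citing \cite{BagKM}), and split on whether the multiplier $\mu$ vanishes. The only cosmetic difference is that the paper obtains the gradient formula by differentiating the identity $E_{\gamma,h}(x)\,Q(x)=-\gamma-\langle h,x\rangle$ rather than applying the quotient rule directly as you do.
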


%%%%%%%%%%%%%
\begin{proof}
Let's define the function 
\[
E_{\gamma,h}:C \rightarrow \mathbb{R},~~~ E_{\gamma,h}(x) := -\frac{\gamma+\langle h,x \rangle}{Q(x)}.
\]
Since this function is differentiable, by differentiating both sides of the equality $E_{\gamma,h}(x) Q(x) = -\gamma -\langle h,x \rangle$ with respect to $x$, we obtain
\lbeq{e.dif}
\partial E_{\gamma,h}(x) = \left\{ \frac{-E_{\gamma,h}(x) x - h}{Q(x)} \right\}.
\eeq
In view of the KKT optimality conditions for inequality constrained nonsmooth problems, see \cite{BagKM}, we have the optimality condition 
\lbeq{e.kkt}
\left\{
\begin{array}{l}
0 \in \partial E_{\gamma,h}(u) + \mu \partial\phi(u),\\
\phi(u) \leq \xi,\\
\mu \geq 0,\\
\mu (\phi(u)-\xi) = 0,
\end{array}
\right.
\eeq
for (\ref{e.func1}). Now, by substituting (\ref{e.dif}) into (\ref{e.kkt}), setting $e := -(\gamma+\langle h,u \rangle)/ Q(u)$, and distinguishing between $\mu=0$ and $\mu>0$, we obtain either (\ref{e.opt2}) or (\ref{e.opt1}).
\end{proof}
%%%%%%%%%%%

Theorem \ref{t.opt1} gives the optimality conditions for general function $\phi$, however, in view of Theorem \ref{t.sol}, it is especially useful when the projection in $C=\{x \mid \phi(x)\leq \xi\}$ is not efficiently available.
In the remainder of this subsection, we derive the solution of OSGA's subproblem (\ref{e.Eeta}) for some  $\phi$ such as $\|\cdot\|_2$ and $\|\cdot\|_{1,2}$ that appear in many applications. We already solve OSGA's subproblem (\ref{e.Eeta}) with the constraint $C=\{x \mid \|x\|_2 \leq \xi\}$ in Proposition \ref{p.eball}, but to show how to apply Theorem \ref{t.opt1} we study it in the next result.

%%%%%%%%%%%%%%%%%%%%%%%%%%
\begin{prop} \label{p.ssl2}
Let $\mathcal{V}$ be a real finite-dimensional Hilbert space with the induced norm $\phi(\cdot) = \|\cdot\|_2$. Then OSGA's subproblem (\ref{e.Eeta}) is solved by
\[
u = -e^{-1} h, ~~ e = \frac{-\beta_2 + \sqrt{\beta_2^2 -4\beta_1 \beta_3}}{-2\beta_1}, ~~\mu=0,
\]
where 
\[
\beta_1 := Q_0,~~ \beta_2 := \gamma, ~~\beta_3 := \frac{1}{2} \|h\|_2^2,
\]
if $\phi(u) < \xi$; Otherwise it is solved by
\[
u = \frac{\xi}{\|h\|_2} h, ~~ e = -\frac{2 \|h\|_2 (\gamma \|h\|_2 + \xi \|h\|_2)}{\xi^2 \|h\|_2^2 + 2Q_0\|h\|_2^2}., ~~ \mu = \frac{2 (\|h\|_2 + e\xi)\|h\|_2^2}{\|h\|_2^2+2Q_0 \|h\|_2^2}.
\]
\end{prop}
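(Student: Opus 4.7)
The plan is to derive both branches of the proposition from the KKT optimality conditions supplied by Theorem \ref{t.opt1}, splitting the analysis according to whether the multiplier $\mu$ is zero or positive.

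In the first branch, I would invoke (\ref{e.opt2}), which directly forces $u = -e^{-1}h$ and $\mu = 0$, and then substitute this expression into the scalar identity $e Q(u) = -\gamma - \langle h, u\rangle$ from (\ref{e.EQ2}). Using $Q(u) = \tfrac{1}{2} e^{-2}\|h\|_2^2 + Q_0$ and $\langle h, u\rangle = -e^{-1}\|h\|_2^2$ and then clearing denominators by multiplying through by $e$ converts the identity into a quadratic $\beta_1 e^2 + \beta_2 e + \beta_3 = 0$ in $e$ with the coefficients stated in the proposition. The positive root (which is the relevant one, since $e>0$ by the standing assumption on the supremum of (\ref{e.Eeta})) yields the claimed formula. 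Validity of this branch is then certified by the inequality $\phi(u) = e^{-1}\|h\|_2 < \xi$.

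In the second branch, complementary slackness together with $\mu>0$ forces $\phi(u) = \|u\|_2 = \xi$, so in particular $u \neq 0$. Because the Euclidean norm is self-dual, Proposition \ref{p.subd} gives the singleton $\partial\phi(u) = \{u/\|u\|_2\} = \{u/\xi\}$, and inserting this into (\ref{e.opt1}) produces
\[
-e u - h = \frac{\mu Q(u)}{\xi}\, u,
\]
which shows that $u$ is a negative scalar multiple of $h$. Combined with the norm identity $\|u\|_2 = \xi$, this pins down $u = -\xi h / \|h\|_2$. Next I would plug this $u$ into $e = -(\gamma + \langle h, u\rangle)/Q(u)$, using $\langle h, u\rangle = -\xi\|h\|_2$ and $Q(u) = \tfrac{1}{2}\xi^2 + Q_0$, to obtain the explicit expression for $e$; with $e$ now known, the displayed vector equation above determines $\mu$ by comparing the scalar coefficient of $h$ (equivalently, by taking Euclidean norms on both sides).

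The main obstacle I anticipate is the careful sign bookkeeping when extracting the correct root of the quadratic in the first branch and verifying that the two branches are genuinely complementary: that is, when the tentative solution $u = -e^{-1}h$ of the first branch violates $\phi(u) < \xi$, the second branch must kick in with strictly positive $\mu$, and conversely $\mu > 0$ excludes interior feasibility. A minor but worthwhile point to check is the degenerate case $h = 0$, in which the first branch reduces to $u = 0$ with $e = -\gamma/Q_0$, which is consistent only under the standing hypothesis that the supremum in (\ref{e.Eeta}) is strictly positive.
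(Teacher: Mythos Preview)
Your proof follows the paper's exactly: split via Theorem \ref{t.opt1}, substitute $u=-e^{-1}h$ into $E_{\gamma,h}(u)=e$ for the interior case to obtain the quadratic in $e$, and in the boundary case use $\partial\|\cdot\|_2(u)=\{u/\xi\}$ in the stationarity condition to force $u$ colinear with $h$, then fix the scalar via $\|u\|_2=\xi$ and read off $e$ and $\mu$. One caveat worth recording: applying (\ref{e.opt1}) literally, as you do, yields $u=-\xi h/\|h\|_2$ rather than the $+\xi h/\|h\|_2$ written in the proposition; your sign is the one consistent with Proposition \ref{p.eball} and with direct maximization of $E_{\gamma,h}$ on the sphere $\|x\|_2=\xi$, so the discrepancy reflects a sign slip in the proposition (the paper's own proof inserts an extra minus when passing from (\ref{e.opt1}) to its displayed equation) rather than in your argument.
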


%%%%%%%%%%%%%
\begin{proof}
Since $\|\cdot\|_2$ is self-dual, Proposition \ref{p.subd} implies
\[
\partial \phi(u) = \left\{
\begin{array}{ll}
\{g \in \mathcal{V}^* ~|~ \|g\|_2 \leq 1 \} & ~~\mathrm{if} ~ u=0,\\
\frac{u}{\|u\|_2}         & ~~\mathrm{if} ~ u \neq 0.
\end{array}
\right.
\]
As $u =0$ is not useful in our optimization setting, we seek only $u \neq 0$. We now apply Theorem \ref{t.opt1} leading to two cases: (i) (\ref{e.opt2}) holds; (ii) (\ref{e.opt1}) holds. 

Case (i). The condition (\ref{e.opt2}) holds. Then we have $u = -e^{-1} h$. By substituting this into the identity $E_{\gamma, h} (u) = e$, we get
\[
e = -\frac{\gamma - \|h\|_2^2 ~e^{-1}}{\frac{1}{2} \|h\|_2^2 ~e^{-2} + Q_0},
\]
implying 
\[
Q_0 e^2 + \gamma e - \frac{1}{2} \|h\|_2^2 = 0.
\]
By using the bigger root of this equation, we have
\[
e = \frac{-\beta_2 + \sqrt{\beta_2^2 -4\beta_1 \beta_3}}{-2 \beta_1},
\]
where $\beta_1=Q_0$, $\beta_2 = \gamma$, and $\beta_3= \frac{1}{2} \|h\|^2$.

Case (ii). The condition (\ref{e.opt1}) holds. Then we have
\[
\frac{-eu - h}{\frac{1}{2} \|u\|_2^2 + Q_0} = -\mu \frac{ u}{\|u\|_2},
\]
giving
\[
(-e u - h)\|u\|_2 + \mu \left( \frac{1}{2} \|u\|_2^2 + Q_0 \right) u = 0,
\]
leading to
\lbeq{e.sxh}
(-e \|u\|_2 + \frac{1}{2} \mu \|u\|_2^2 + \mu Q_0) u = \|u\|_2 h.
\eeq
This implies that there exist $\lambda$ such that $u = \lambda h$. By substituting this into $\phi(u) = \|u\|_2 = \xi$ we get
\[
\lambda = \frac{\xi}{\|h\|_2}.
\]
Now, substituting $u$ into (\ref{e.sxh}), we obtain
\lbeq{e.smu}
\mu = \frac{2 (\|h\|_2 + e\xi)\|h\|_2^2}{\|h\|_2^2+2Q_0 \|h\|_2^2}.
\eeq
It follows from $E_{\gamma, h} (u) = e$ that
\[
e = -\frac{2 \|h\|_2 (\gamma \|h\|_2 + \xi \|h\|_2)}{\xi^2 \|h\|_2^2 + 2Q_0\|h\|_2^2}.
\]
This gives the result.
\end{proof}
%%%%%%%%%%%

In 2004, {\sc Yuan} and {\sc Lin} in \cite{YuaL} proposed an 
interesting regularizer called grouped LASSO for the linear regression. 
Later {\sc Kim} et al. in \cite{KimKY} proposed a constrained ridge 
regression model using the constraint
\[
\|x\|_{1,2} \leq \xi,
\]
where
\[
\|x\|_{1,2} := \sum_{i=1}^m \|x_{g_i}\|_2,
\]
where $x = (x_{g_1}, \cdots, x_{g_m})$ and $\|x\|_{1,2}$ is a so-called the $l_{1,2}$ group norm. We consider this constraint in the next result.

%%%%%%%%%%%%%%%%%%%%%%%%%%%
\begin{prop} \label{p.sl12}
Let $\mathcal{V}$ be a real finite-dimensional vector space with the induced norm $\phi(\cdot) = \|\cdot\|_{1,2}$. Then OSGA's subproblem (\ref{e.Eeta}) is solved by
\[
u_{g_i} = -e^{-1} h_{g_i} ~~~ \mathrm{for~all}~ i = 1, \cdots, m,
\]
and
\[
e = \frac{-\beta_2 + \sqrt{\beta_2^2 -4\beta_1 \beta_3}}{-2\beta_1}, ~~\mu=0,
\]
where 
\[
\beta_1 :=Q_0,~~ \beta_2 := \gamma,~~ \beta_2 := \frac{1}{2} \|h\|_2^2 - \sum_{i=1}^m \|h_{g_i}\|_2^2,
\]
if $\phi(u) < \xi$; Otherwise it is solved by
\[
u_i = \rho_i h_{g_i}, ~~\rho_i = \frac{\|h_{g_i}\|_2 - \mu\left(\frac{1}{2} \xi^2 + Q_0 \right)}{e \|h_{g_i}\|_2} ~~~ \mathrm{for~all}~ i = 1, \cdots, m,
\]
and
\[
e = -\frac{\gamma+\langle h,u \rangle}{\frac{1}{2} \xi^2 + Q_0} = -\frac{2(\gamma + \sum_{i=1}^n \tau_i^2 \|h_{g_i}\|_2^2)}{\sum_{i=1}^n \tau_i^2 \|h_{g_i}\|_2^2 + 2Q_0}, ~~ \mu = \frac{2(\sum_{i=1}^m \|h_{g_i}\|_2 +e \xi)}{m(\sum_{i=1}^m \tau_i^2 \| h_{g_i} \|_2^2 + 2Q_0)}.
\]
\end{prop}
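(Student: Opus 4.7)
The plan is to mirror the analysis of Proposition \ref{p.ssl2}, adapted to the block-separable structure of the group norm. First I would compute $\partial\phi(u)$ for $\phi(u) = \|u\|_{1,2} = \sum_{i=1}^m \|u_{g_i}\|_2$: because the groups are disjoint, the subdifferential decomposes across blocks, and on each block with $u_{g_i}\ne 0$ the relevant element is $u_{g_i}/\|u_{g_i}\|_2$ by Proposition \ref{p.subd}. Then I would apply Theorem \ref{t.opt1}, splitting into the cases $\phi(u) < \xi$ and $\phi(u) = \xi$.

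Case~(i) is essentially identical to the corresponding case in Proposition \ref{p.ssl2}: $\mu = 0$ gives $u = -e^{-1}h$, and substituting into the identity $E_{\gamma,h}(u) = e$ produces a scalar quadratic $\beta_1 e^2 + \beta_2 e + \beta_3 = 0$, whose larger root yields the stated formula. I would simply need to verify that $\phi(-e^{-1}h) = e^{-1} \sum_i \|h_{g_i}\|_2 < \xi$ actually holds to stay in this regime; otherwise one falls back to Case~(ii).

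For Case~(ii), the key observation is that the optimality condition (\ref{e.opt1}) reads blockwise,
\[
\frac{-e\, u_{g_i} - h_{g_i}}{Q(u)} \;=\; \mu \, \frac{u_{g_i}}{\|u_{g_i}\|_2}, \qquad i=1,\dots,m,
\]
so each $u_{g_i}$ must lie in the line spanned by $h_{g_i}$; writing $u_{g_i} = \rho_i h_{g_i}$ and solving the $i$-th block gives $\rho_i$ in terms of $e$, $\mu$, and $\|h_{g_i}\|_2$, which matches the claimed closed form. The constraint $\phi(u) = \xi$, i.e.\ $\sum_i |\rho_i|\, \|h_{g_i}\|_2 = \xi$, is then combined with the definition $e = -(\gamma + \langle h,u \rangle)/Q(u)$ to back out $e$ and $\mu$.

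The main obstacle will be the coupling in Case~(ii): unlike the Euclidean-ball case treated in Proposition \ref{p.ssl2}, the denominator $Q(u) = \tfrac12 \|u\|_2^2 + Q_0 = \tfrac12 \sum_i \rho_i^2 \|h_{g_i}\|_2^2 + Q_0$ mixes all groups, so the per-block relations cannot be solved fully independently — they must be closed up using the scalar constraint $\sum_i \rho_i \|h_{g_i}\|_2 = \xi$. Some care with signs is also required (one must argue $\rho_i$ has the same sign across $i$ at the maximizer, so that the sum equals $\xi$ rather than an unsigned quantity), and the symbols $\tau_i$ in the stated formulas — not defined in the proposition — presumably abbreviate $\rho_i$; I would make that substitution explicit. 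Once these bookkeeping points are settled, the remaining work is routine algebraic substitution into $E_{\gamma,h}(u) = e$ and into the blockwise optimality relation.
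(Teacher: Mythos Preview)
Your plan is correct and mirrors the paper's own proof almost step for step: compute the block-separable subdifferential via Proposition~\ref{p.subd}, apply Theorem~\ref{t.opt1}, handle Case~(i) exactly as in Proposition~\ref{p.ssl2}, and in Case~(ii) write $u_{g_i}=\tau_i h_{g_i}$, sum the block relations together with $\sum_i \tau_i\|h_{g_i}\|_2=\xi$ to extract $\mu$, back-substitute for $\tau_i$, and finally read off $e$ from $E_{\gamma,h}(u)=e$. Your observation that the symbols $\tau_i$ in the statement are the same as your $\rho_i$ is exactly what the paper does (it switches to $\tau_i$ in its proof without comment), and the sign/coupling issues you flag are precisely the bookkeeping the paper glosses over.
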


%%%%%%%%%%%%%
\begin{proof}
Similar to Proposition \ref{p.ssl2}, we consider $u \neq 0$. In view of Proposition (\ref{p.subd}), we get
\[
\partial \phi(u_{g_i}) = \left\{ \frac{u_{g_i}}{\|u_{g_i}\|_2}  \right\} ~~~ \Forall i=1, \cdots, m,          
\]
leading to 
\[
\partial \phi(u) = \left\{ \left( \frac{u_{g_1}}{\|u_{g_1}\|_2}, \cdots, \frac{u_{g_m}}{\|u_{g_m}\|_2} \right) \right\}.          
\]
We now apply Theorem \ref{t.opt1} leading to two cases: (i) (\ref{e.opt2}) holds; (ii) (\ref{e.opt1}) holds.  

Case (i). The condition (\ref{e.opt2}) holds. Then we have $u_{g_i} = -e^{-1} h_{g_i}$ for $i = 1, \cdots, n$. By substituting $u=(u_{g_1}, \cdots,u_{g_n})$ into the identity $E_{\gamma, h} (u) = e$, we get
\[
e = \frac{-\gamma + \sum_{i=1}^m \|h_{g_i}\|_2^2 ~ e^{-1}}{\frac{1}{2} \|h\|_2^2 ~ e^{-2} + Q_0},
\]
implying 
\[
Q_0 e^2 + \gamma e + \frac{1}{2} \|h\|_2^2 - \sum_{i=1}^m \|h_{g_i}\|_2^2= 0.
\]
By using the bigger root of this equation, we get
\[
e = \frac{-\beta_2 + \sqrt{\beta_2^2 -4\beta_1 \beta_3}}{-2\beta_1},
\]
where $\beta_1 :=Q_0$, $\beta_2 := \gamma$, and $\beta_3 := \frac{1}{2} \|h\|_2^2 - \sum_{i=1}^m \|h_{g_i}\|_2^2$.

Case (ii). The condition (\ref{e.opt1}) holds. Then we have
\[
\frac{-e u_{g_i} - h_{g_i}}{\frac{1}{2} \|u\|_2^2 + Q_0} = -\mu \frac{u_{g_i}}{\| u_{g_i} \|_2} ~~~ \Forall i = 1, \cdots, m.
\]
Since $\phi(u) = \|u\| = \xi$, we equivalently get
\[
\left(\frac{1}{2} \|u\|_2^2 + Q_0 \right) \left( -\frac{e}{\frac{1}{2} \|u\|_2^2 + Q_0} + \frac{\mu}{\| u_{g_i} \|_2} \right) u_{g_i} = h_{g_i}
\]
implying $u_{g_i} = \tau_i h_{g_i}$. If $h_{g_i} = 0$, then $u_{g_i} = 0$. Now let $h_{g_i} \neq 0$. Substituting $u_{g_i} = \tau_i h_{g_i}$ into the previous identity, it follows that
\[
\left(\frac{1}{2} \sum_{i=1}^m \tau_i^2 \| h_{g_i} \|_2^2 + Q_0 \right) \left( -\frac{e}{\frac{1}{2} \sum_{i=1}^m \tau_i^2 \| h_{g_i} \|_2^2 + Q_0} + \frac{\mu}{\tau_i \| h_{g_i} \|_2} \right) \tau_i  h_{g_i} = h_{g_i}.
\]
giving
\[
-e \tau_i \| h_{g_i} \|_2 + \mu \left(\frac{1}{2} \sum_{i=1}^m \tau_i^2 \| h_{g_i} \|_2^2 + Q_0 \right) = \| h_{g_i} \|_2 ~~~ \Forall i = 1, \cdots, m.
\]
Applying a summation from both sides, together with $\sum_{i=1}^m \tau_i \| h_{g_i} \|_2 = \xi$, yields
\lbeq{e.taui}
-e \xi + m \mu \left(\frac{1}{2} \sum_{i=1}^m \tau_i^2 \| h_{g_i} \|_2^2 + Q_0 \right) = \sum_{i=1}^m \| h_{g_i} \|_2,
\eeq
implying 
\[
\mu = \frac{2(\sum_{i=1}^m \|h_{g_i}\|_2 +e \xi)}{m(\sum_{i=1}^m \tau_i^2 \| h_{g_i} \|_2^2 + 2Q_0)}.
\]
By substituting this into (\ref{e.taui}), we have
\[
\tau_i = -\frac{1}{m e \| h_{g_i} \|_2}  \left( m \|h_{g_i}\|_2 - \sum_{i=1}^m \|h_{g_i}\|_2 - e \xi \right)
\]
leading to
\[
u = (\tau_i h_{g_1}, \cdots, \tau_m h_{g_m}).
\]
By substituting this into $E_{\gamma, h} (u) = e$, we get
\[
e = -\frac{\gamma+\langle h,u \rangle}{\frac{1}{2} \xi^2 + Q_0} = -\frac{2(\gamma + \sum_{i=1}^n \tau_i^2 \|h_{g_i}\|_2^2)}{\sum_{i=1}^n \tau_i^2 \|h_{g_i}\|_2^2 + 2Q_0},
\]
giving the result.
\end{proof}

\vspace{5mm}
% ######################################################
% ######################################################
\section{Numerical experiments}

A software package for solving unconstrained and simply constrained 
convex optimization problems with OSGA is publicly available at
\begin{center}
\url{http://homepage.univie.ac.at/masoud.ahookhosh/}.
\end{center}
The package is written in MATLAB; it uses the parameters 
\begin{equation*}
\delta = 0.9;~~ \alpha_{max} = 0.7;~~ \kappa = \kappa' = 0.5;~~ 
\Psi_{\mathrm{target}} = - \infty. 
\end{equation*}
and the prox-function (\ref{e.prox}) with 
$Q_0 = \frac{1}{2} \|x_0\|_2 + \epsilon$, where $\epsilon$ is the 
machine precision. A user manual \cite{AhoUM} describes the design and 
use of the package. Some examples are included as illustrations.

This section discusses numerical results and comparisons of OSGA with 
some state-of-the-art first-order solvers on some ridge regression and 
image deblurring problems. All numerical results were created with 
version 1.1 of the above software. The algorithms used for comparison 
use the default parameter values reported in the corresponding papers 
or packages. All numerical experiments were executed on a 
Toshiba Satellite Pro L750-176 laptop with Intel Core i7-2670QM 
processor and 8 GB RAM.

% ########################################################
\subsection{Ridge regression}
In this subsection we consider a $l_2$-constrained least squares of 
the form (\ref{e.las2}) (so-called ridge regression, see \cite{HoeK}) and report some numerical results.

The problem is generated by
\[
\mathtt{[A,z,x] = i\_laplace(n),~~~ y= z + 0.1*rand,}
\]
where $n=5000$ is the problem dimension and $\mathtt{i\_laplace.m}$ is an ill-posed test problem generator using the inverse Laplace transformation from Regularization Tools MATLAB package, which is available in
\begin{center}
\url{http://www.imm.dtu.dk/~pcha/Regutools/}.
\end{center}
Since (\ref{e.las2}) is smooth and the projection on $C = \{x \in \mathbb{R}^n \mid \|x\| \leq \xi\}$ is available (see Table 1), we employ gradient projection algorithm (PGA), the spectral gradient projection \cite{BirMR} with the {\sc Grippo} et al. nonmonotone term \cite{GriLL} (SPG-G), the spectral gradient projection with the {\sc Amini} et al. nonmonotone term \cite{AmiAN} (SPG-A), and OSGA (see Proposition \ref{p.ssl2}) to solve this minimization problem. The parameters of SPG-G and SPG-A are the same as those reported in the associated papers, but SPG-A uses
\[
  \eta_k=\left\{
  \begin{array}{ll}
    \eta_0/2\ \ & \hbox{if\ \ $k=1$}, \\
    (\eta_{k-1}+\eta_{k-2})/2\ \ & \hbox{if\ \ $k \geq 2$}. \\
  \end{array}  \right. 
\]
The algorithms are stopped after 500 iterations.

%%%%%%%%%%%%%%%%%%%%%%%%%%%%%%%%%
\begin{table}[h] \label{t.l22l22}
\caption{Result summary for the ridge regression}
\begin{center}\footnotesize
\renewcommand{\arraystretch}{1.3}
\begin{tabular}{|l|l|l|l|l|l|}\hline
\multicolumn{1}{|l|}{} & \multicolumn{1}{l|}{{\bf $\xi$}}
&\multicolumn{1}{l|}{{\bf PGA}} & \multicolumn{1}{l|}{{\bf SPG-G}}
&\multicolumn{1}{l|}{{\bf SPG-A}} & \multicolumn{1}{l|}{{\bf OSGA}} \\ 
\hline
{\bf $f_b$} & $10$ & 101.70e-3 & 7.60e-3 & 6.41e-3 & 3.60e-3\\
{\bf Time(s) } &                    & 77.78 & 30.08 & 31.20 & 22.09\\
\hline
{\bf $f_b$} & $15$ & 48.23e-3 & 1.70e-3 & 1.31e-3 & 1.52e-3\\
{\bf Time(s) } &                    & 66.54 & 25.00 & 24.24 & 21.55\\
\hline
{\bf $f_b$} & $20$ & 23.08e-2 & 2.01e-2 & 1.74e-2 & 8.60e-3\\
{\bf Time(s) } &                    & 64.60 & 28.47 & 27.11 & 21.40\\
\hline
{\bf $f_b$} & $25$ & 23.00e-2 & 2.22e-2 & 1.24e-2 & 8.96e-3\\
{\bf Time(s) } &                    & 62.55 & 30.20 & 31.18 & 26.50\\
\hline
\end{tabular}
\end{center}
\end{table}

\begin{figure} \label{f.l22l22}
\centering
\subfloat[][$\delta_k$ versus iterations, $\xi = 10$]{\includegraphics[width=6.1cm]{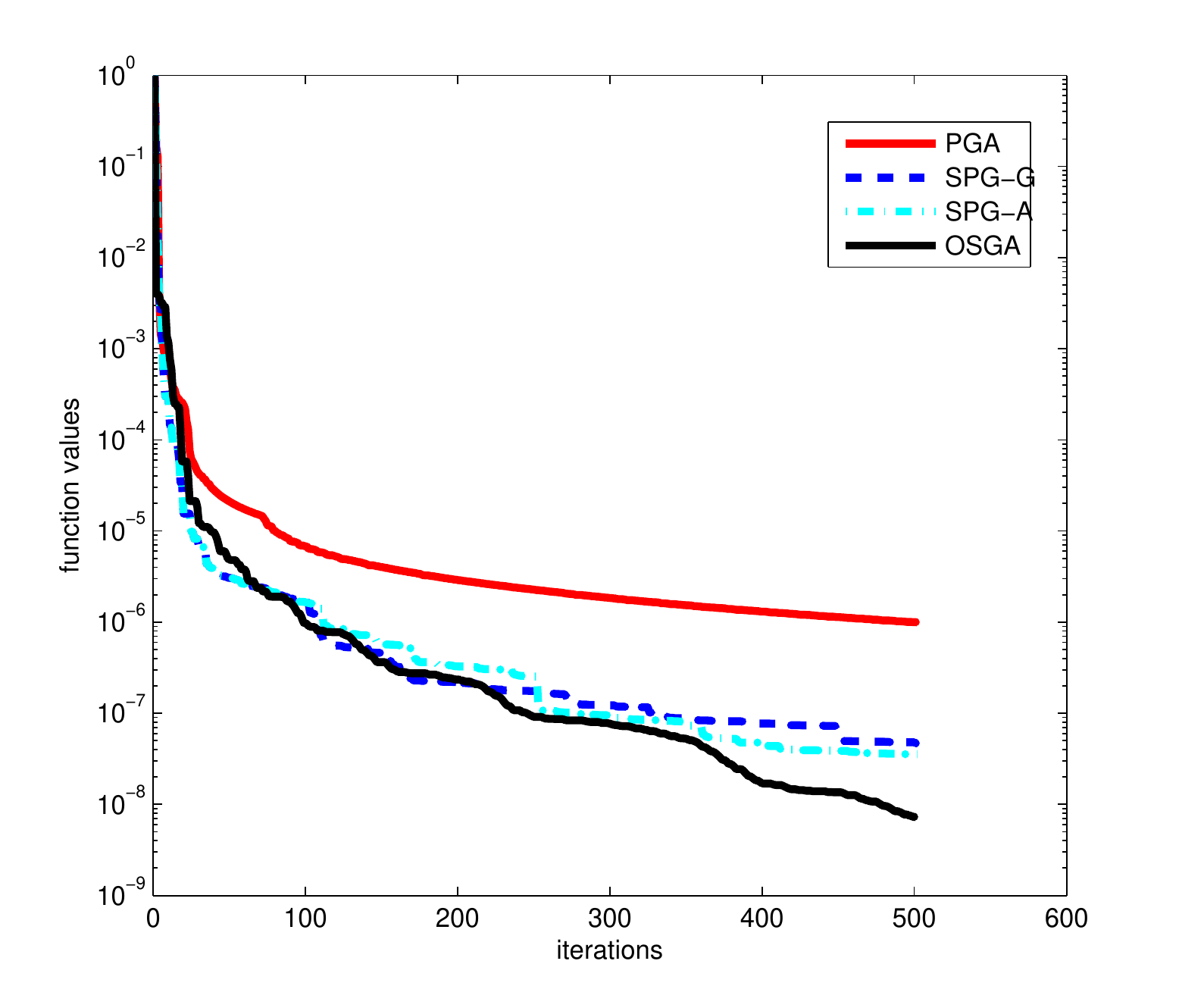}}%
\qquad
\subfloat[][$\delta_k$ versus iterations, $\xi = 15$]{\includegraphics[width=6.1cm]{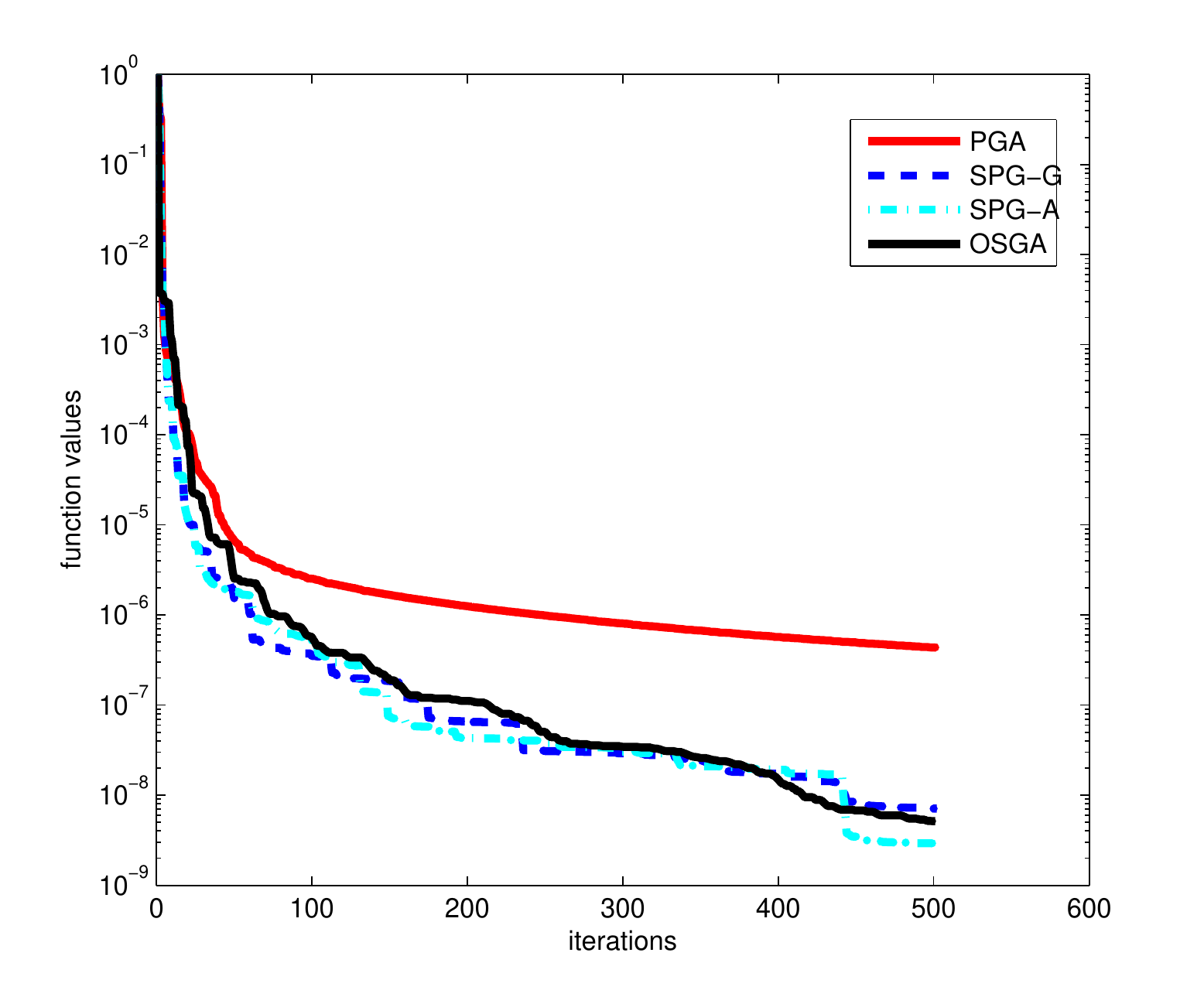}}
\qquad
\subfloat[][$\delta_k$ versus iterations, $\xi = 20$]{\includegraphics[width=6.1cm]{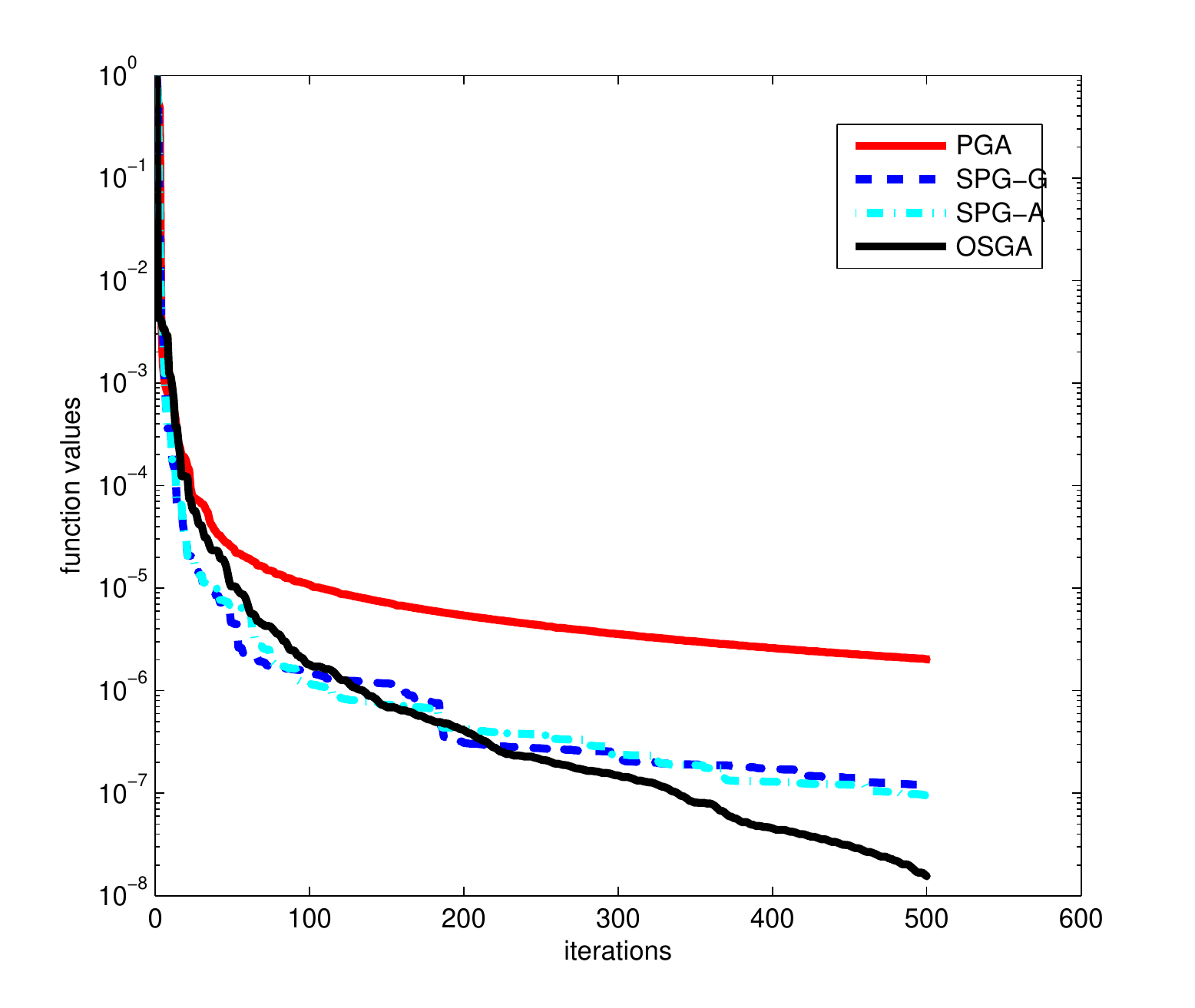}}%
\qquad
\subfloat[][$\delta_k$ versus iterations, $\xi = 25$]{\includegraphics[width=6.1cm]{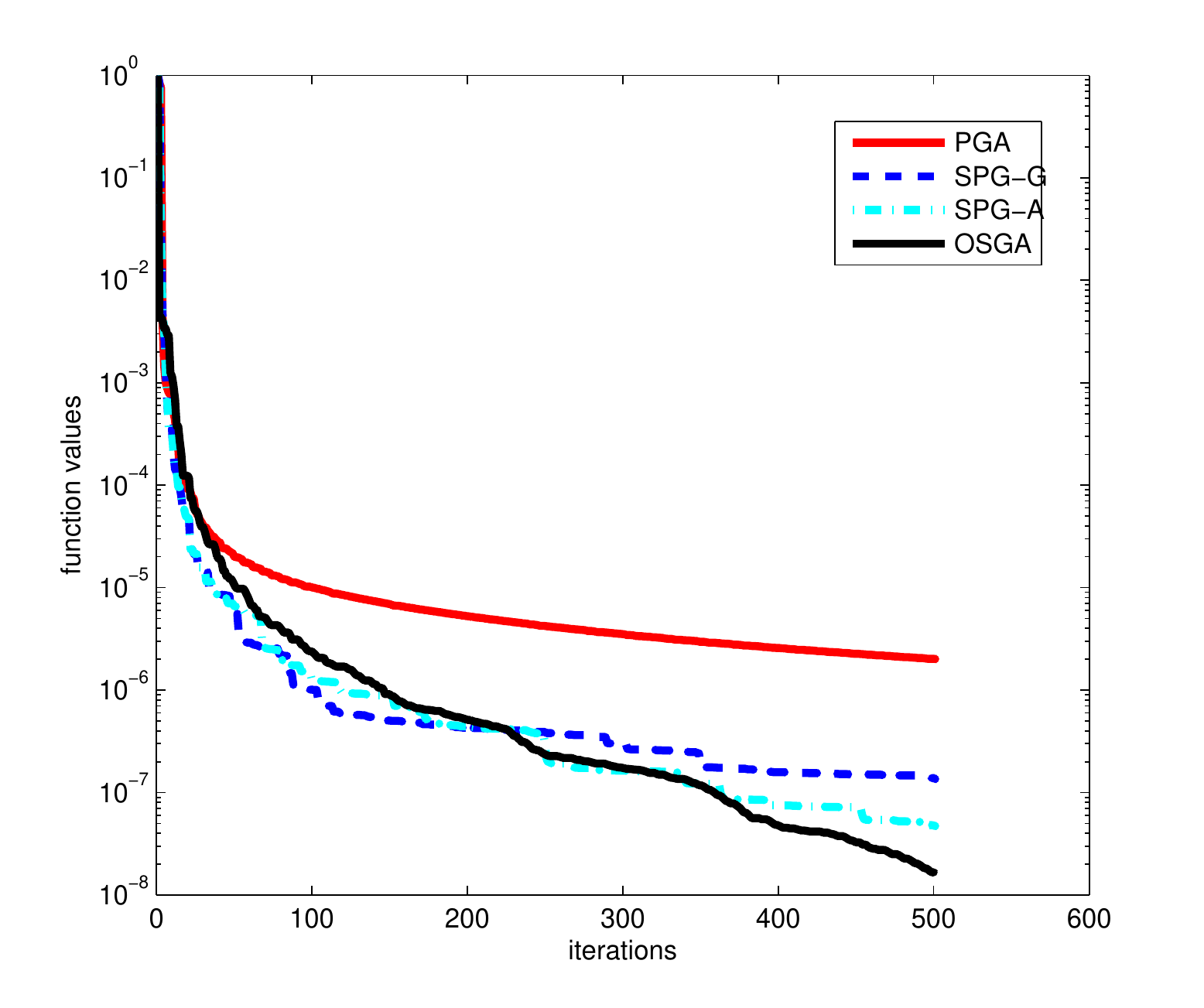}}

\caption{A comparison among PGA, SPG-G, SPG-A, and OSGA for solving the problem (\ref{e.las2}) based on the relative error of function values $\delta_k$ (\ref{e.delta}). The algorithms were stopped after 500 iterations.}
\end{figure}

In Table 3 we consider $\xi=10, 15, 20, 25$ and report the best attained function values and the running time. The results imply that OSGA attains the best running time and except for $\xi=15$ gives the best function values. To see the results of implementation in more details, we demonstrate the relative error of function values 
\lbeq{e.delta}
\delta_k := \frac{f_k - \widehat{f}}{f_0 - \widehat{f}}
\eeq
in Figure 1, where $\wh f$ denotes the minimum and $f_0$ shows the function value on an initial point $x_0$.

% ######################################################
\subsection{Image deblurring with nonnegativity constraint}
As discussed in Section 3, inverse problems are appearing in many fields of applied sciences and Engineering. This is particularly happen when researchers use digital images to record and analyze results from experiments in many fields such as astronomy, medical sciences, biology, geophysics, and physics. In these cases, observing blurred and noisy images is a common phenomenon happening frequently because of environmental effects and imperfections in the imaging system. 

In many applications, the variable $x$ describes physical quantities, 
which is meaningful if each component of $x$ is restricted to be  
nonnegative. This constraint is referred as the nonnegativity 
constraint; it is especially useful for restoring blurred and noisy 
images, see \cite{BarV,KauN1,KauN2,Vog}.

We restore the $256 \times 256$ blurred and noisy MR-brain image using 
the model (\ref{e.l22itvr}) equipped with the isotropic total variation 
regularizer. The true image is available in 
\begin{center}
\url{http://graphics.stanford.edu/data/voldata/}.
\end{center}
The blurred/noisy image $y$ is generated by a $9 \times 9$ uniform blur and adding a Gaussian noise with zero mean and standard deviation 
set to $10^{-3}$. For restoring the image, we use OSGA (see Proposition \ref{p.nego}), MFISTA (a monotone version of 
FISTA proposed by {\sc Beck \& Teboulle} in \cite{BecT3}), ADMM (an alternating 
direction method proposed by {\sc Chan} et al. in \cite{ChaTY}), and PSGA (a projected subgradient scheme with nonsummable diminishing step size), see \cite{BoyXM}. The original codes of MFISTA and ADMM provided by the authors are used. Since  the methods are sensitive to the regularization parameter $\lambda$, three different regularization parameters are used. The algorithms are stopped after 100 iterations. The comparison concerning the quality of the recovered image is made via the so-called 
peak signal-to-noise ratio (PSNR) defined by 
\lbeq{e.psnr}
\mathrm{PSNR} = 20 \log_{10} \left( \frac{\sqrt{mn}}{\|x - x_t\|_F} \right)
\eeq
and the improvement in signal-to-noise ratio (ISNR) defined by
\lbeq{e.isnr}
\mathrm{ISNR} = 20 \log_{10} \left( \frac{\|y - x_t\|_F}{\|x - x_t\|_F} \right), 
\eeq
where $\|\cdot\|_F$ is the Frobenius norm, $x_t$ denotes the $m \times n$ true image, $y$ is the observed image, and pixel values are in $[0, 1]$. The results of implementation are summarized in Table 4 and Figures 2 and 3.

%%%%%%%%%%%%%%%%%%%%%%%%%%%%%%%%%%
\begin{table}[h] \label{t.l22itvr}
\caption{Result summary for L22ITV}
\begin{center}\footnotesize
\renewcommand{\arraystretch}{1.3}
\begin{tabular}{|l|l|l|l|l|l|}\hline
\multicolumn{1}{|l|}{} & \multicolumn{1}{l|}{{\bf $\lambda$}}
&\multicolumn{1}{l|}{{\bf PSGA}} & \multicolumn{1}{l|}{{\bf MFISTA}}
&\multicolumn{1}{l|}{{\bf ADMM}} & \multicolumn{1}{l|}{{\bf OSGA}} \\ 
\hline
{\bf PSNR}  &                    & 32.59 & 32.67 & 32.66 & 32.73\\
{\bf $f_b$} & $5 \times 10^{-4}$ & 0.3528 & 0.3079 & 0.3080 & 0.3149\\
{\bf Time(s) } &                    & 1.14 & 7.61 & 1.11 & 1.82\\
\hline
{\bf PSNR}  &                    & 33.23 & 33.96 & 33.95 & 33.97\\
{\bf $f_b$} & $1 \times 10^{-4}$ & 0.1184 & 0.0960 & 0.0958 & 0.0980\\
{\bf Time(s) } &                    & 1.14 & 7.34 & 1.04 & 1.71\\
\hline
{\bf PSNR}  &                    & 33.24 & 34.45 & 34.49 & 34.46\\
{\bf $f_b$} & $5 \times 10^{-5}$ & 0.1174 & 0.0653 & 0.0651 & 0.0669\\
{\bf Time(s) } &                    & 1.15 & 6.51 & 1.06 & 1.67\\
\hline
\end{tabular}
\end{center}
\end{table}

%%%%%%%%%%%%%%%%%%%%%%%%%%%%%
\begin{figure} \label{f.deb1}
\centering
\subfloat[][$\delta_k$ versus iterations, $\lambda = 5 \times 10^{-4}$]{\includegraphics[width=6.1cm]{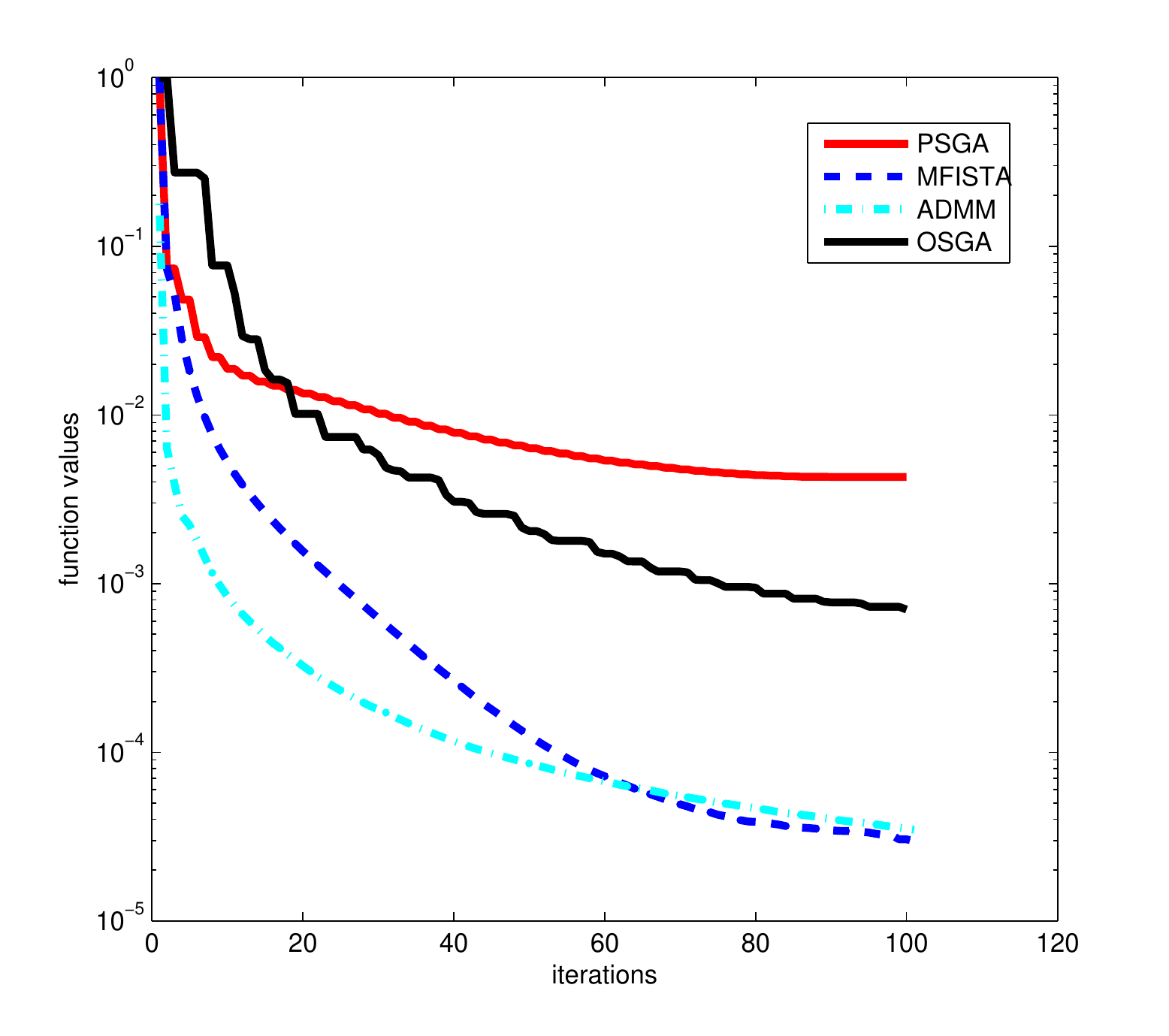}}%
\qquad
\subfloat[][ISNR versus iterations, $\lambda = 5 \times 10^{-4}$]{\includegraphics[width=6.1cm]{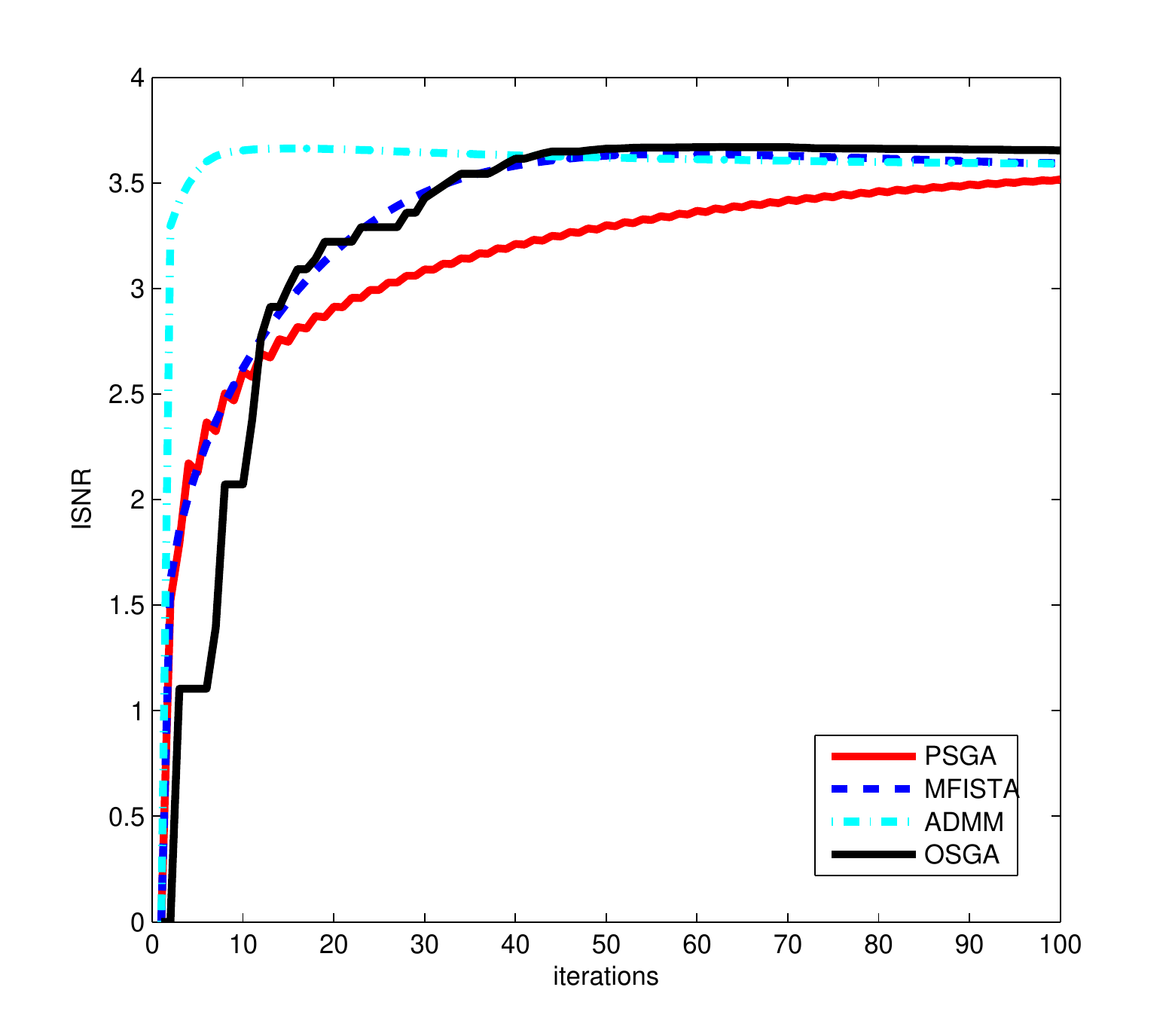}}
\qquad
\subfloat[][$\delta_k$ versus iterations, $\lambda = 1 \times 10^{-4}$]{\includegraphics[width=6.1cm]{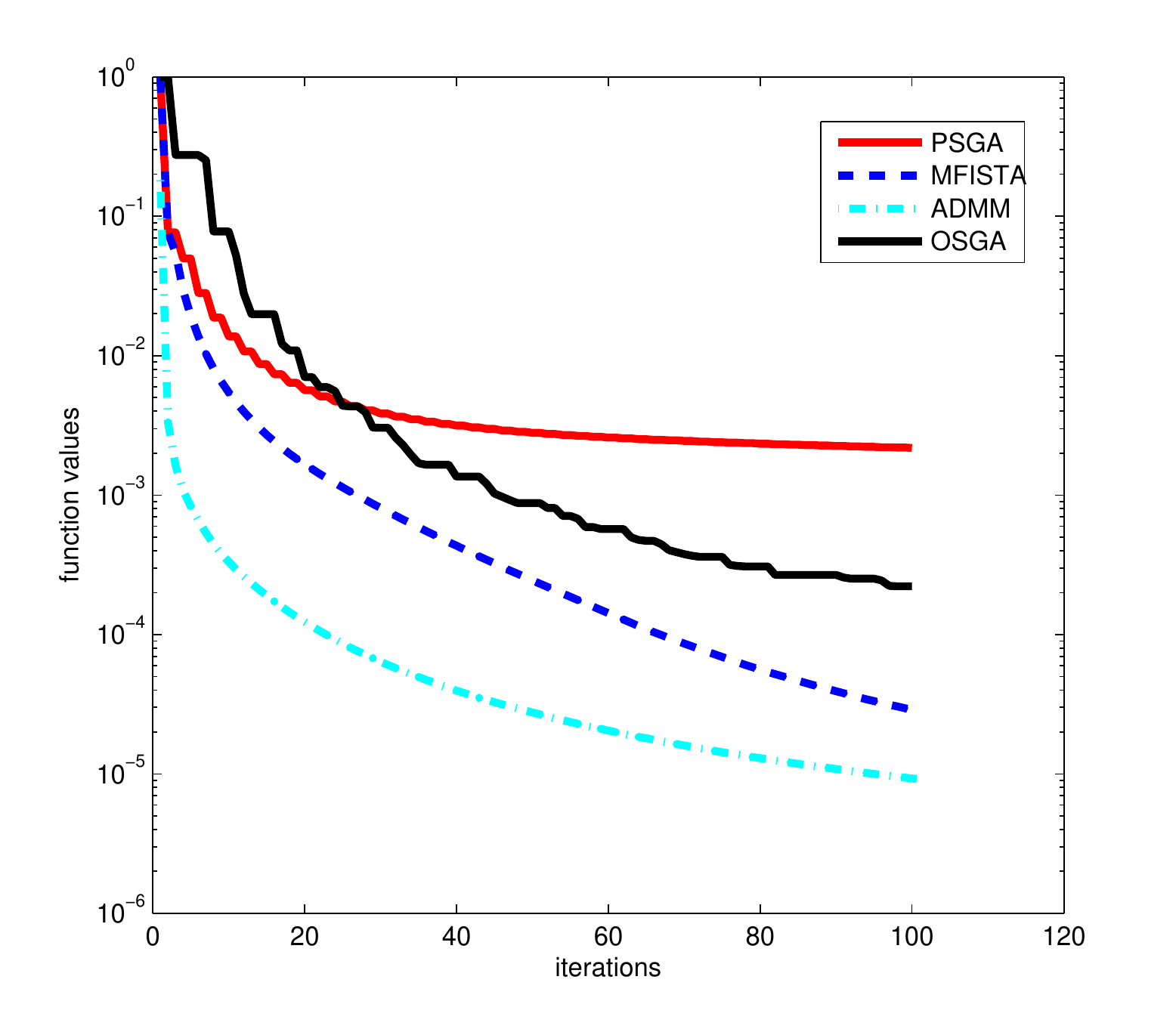}}%
\qquad
\subfloat[][ISNR versus iterations, $\lambda = 1 \times 10^{-4}$]{\includegraphics[width=6.1cm]{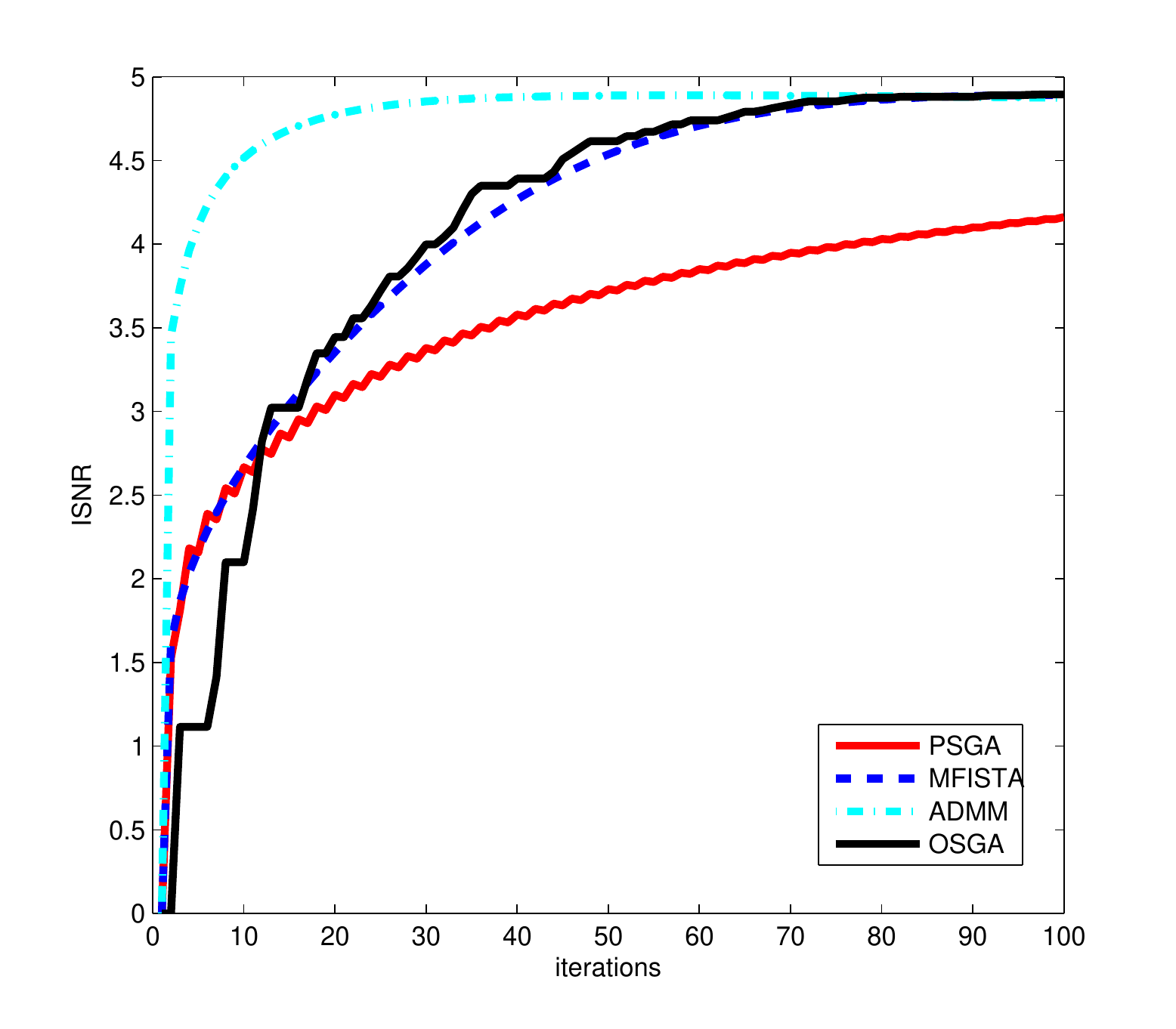}}
\qquad
\subfloat[][$\delta_k$ versus iterations, $\lambda = 5 \times 10^{-5}$]{\includegraphics[width=6.1cm]{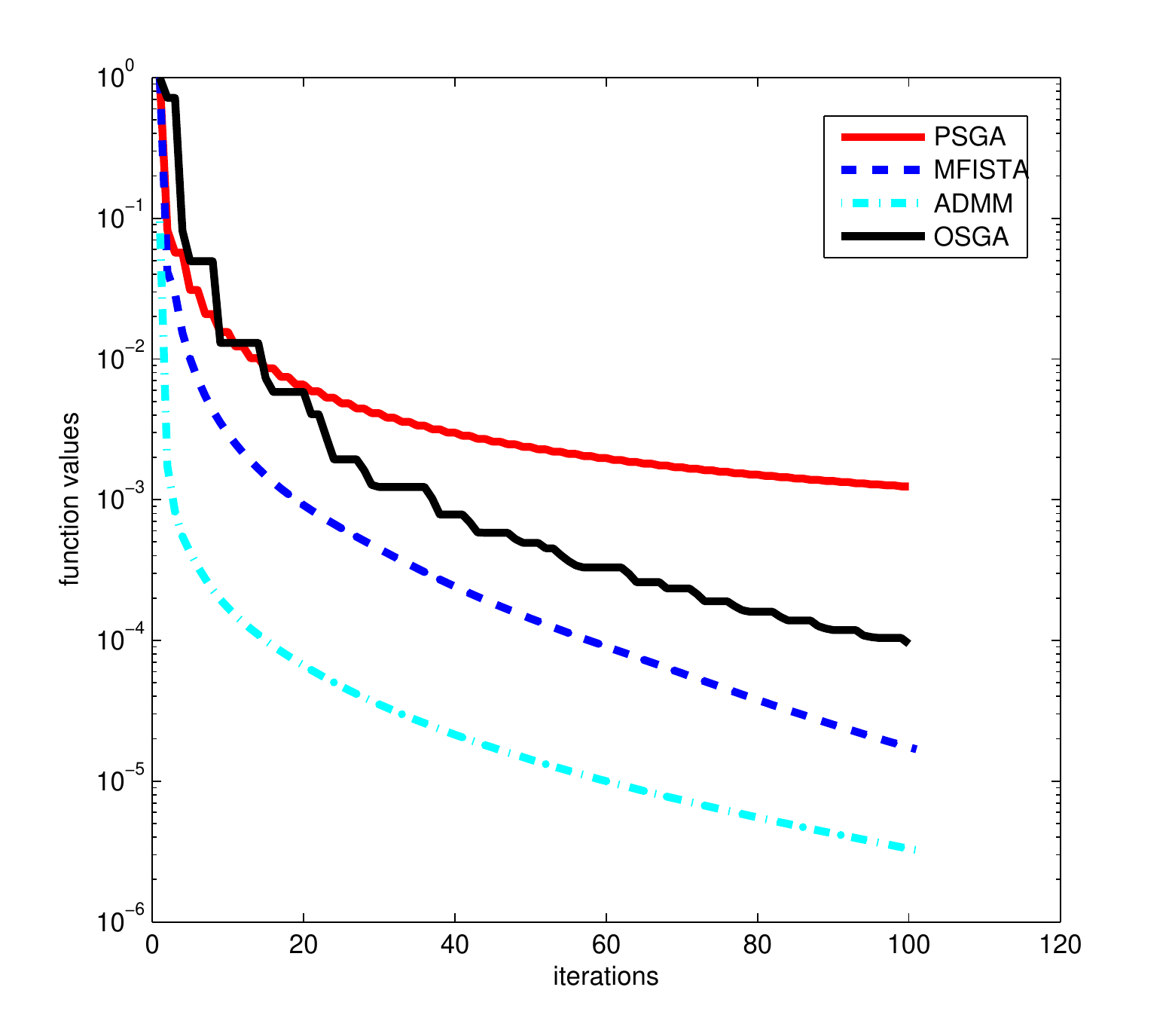}}%
\qquad
\subfloat[][ISNR versus iterations, $\lambda = 5 \times 10^{-5}$]{\includegraphics[width=6.1cm]{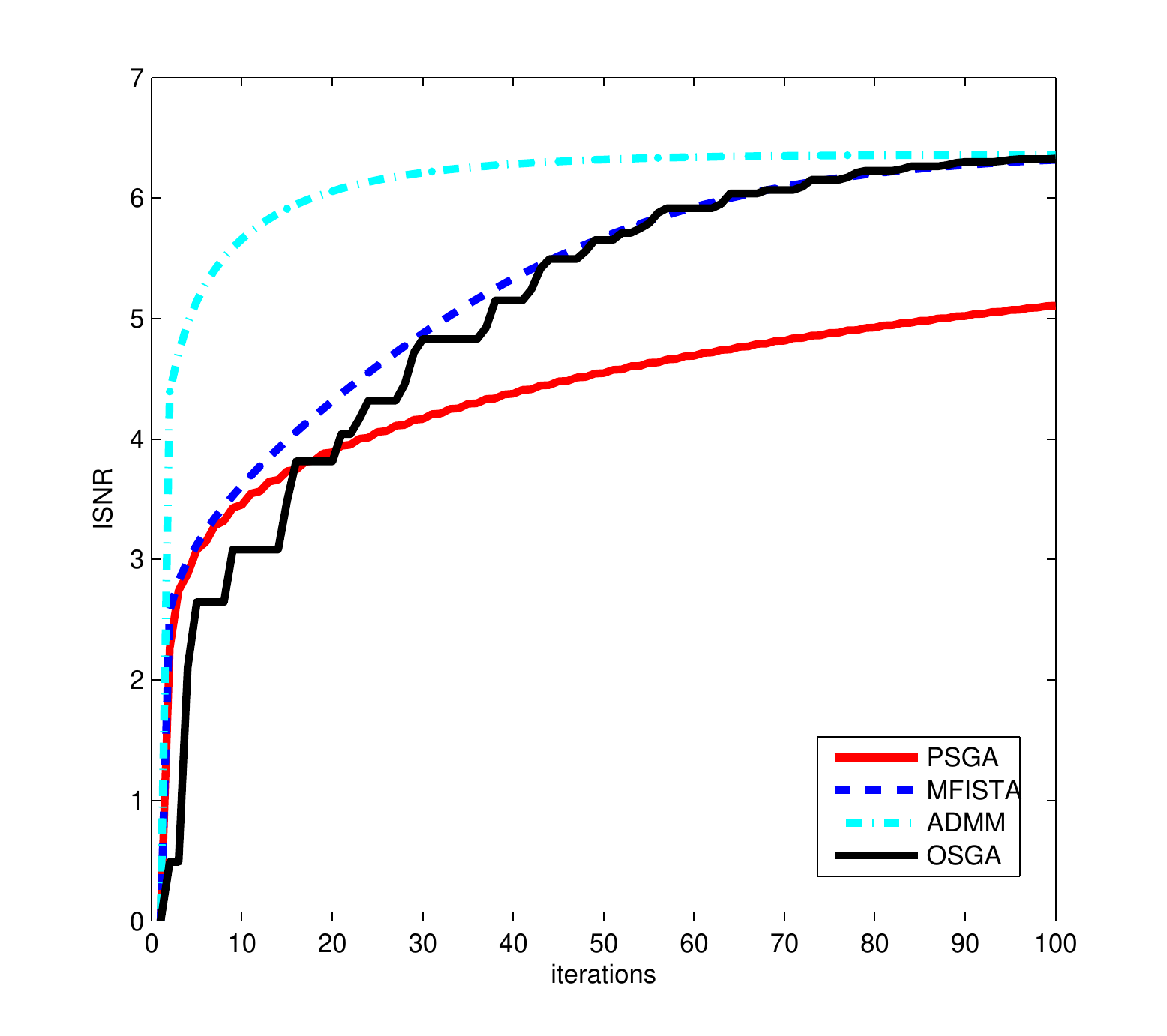}}

\caption{A comparison among PSGA, MFISTA, ADMM, and OSGA for deblurring the $256 \times 256$ MR-brain image with the $9 \times 9$ uniform blur and the Gaussian noise with deviation $10^{-3}$. The algorithms were stopped after 100 iterations. Subfigures (a), (c), and (e) display the relative error of function values $\delta_k$ (\ref{e.delta}) versus iterations, and Subfigures (b), (d), and (f) display ISNR (\ref{e.isnr}) versus iterations.}
\end{figure}

%%%%%%%%%%%%%%%%%%%%%%%%%%%%%
\begin{figure} \label{f.deb2}
\centering
\subfloat[][Original image]{\includegraphics[width=6.1cm]{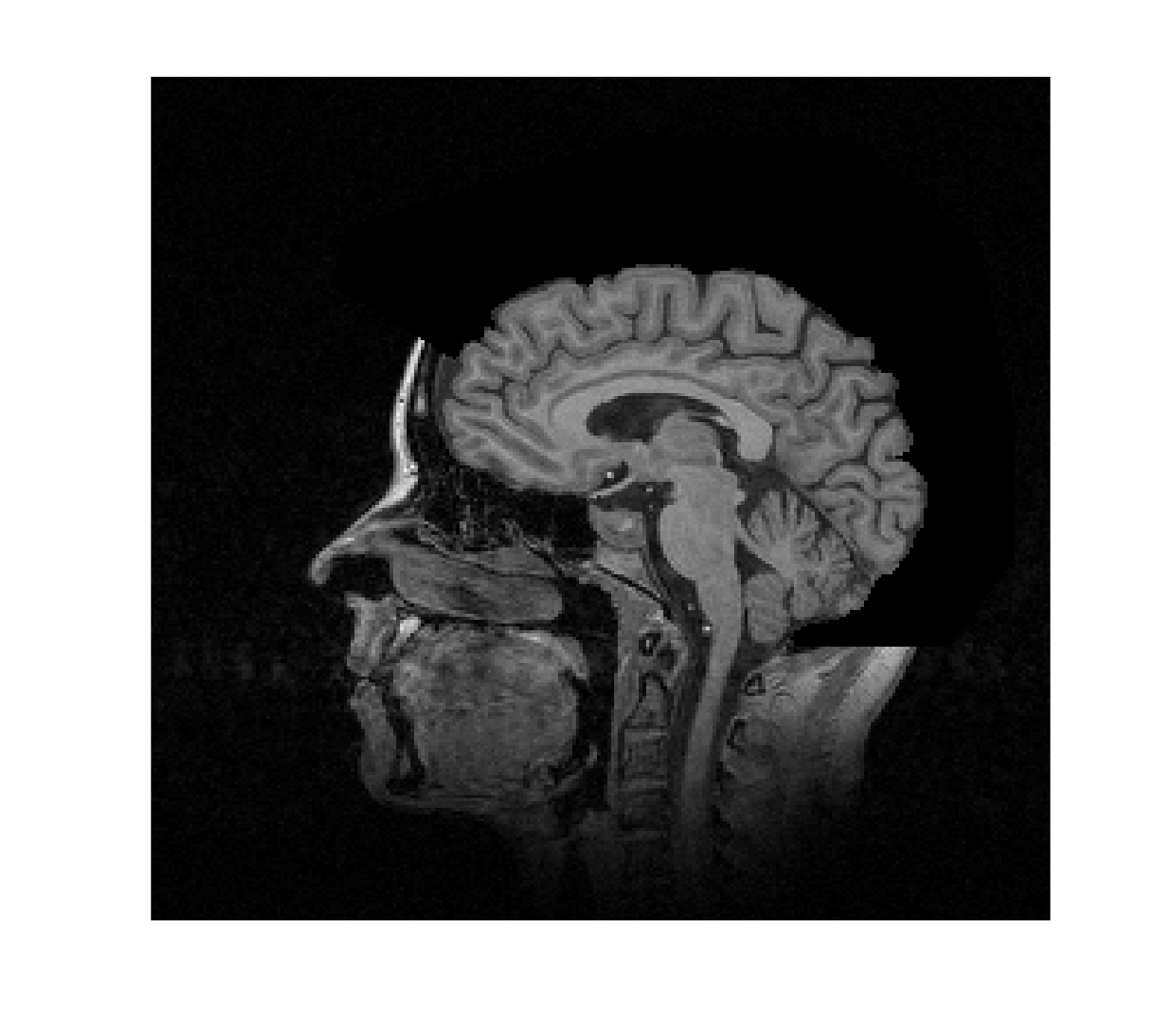}}%
\qquad
\subfloat[][Blurred/noisy image]{\includegraphics[width=6.1cm]{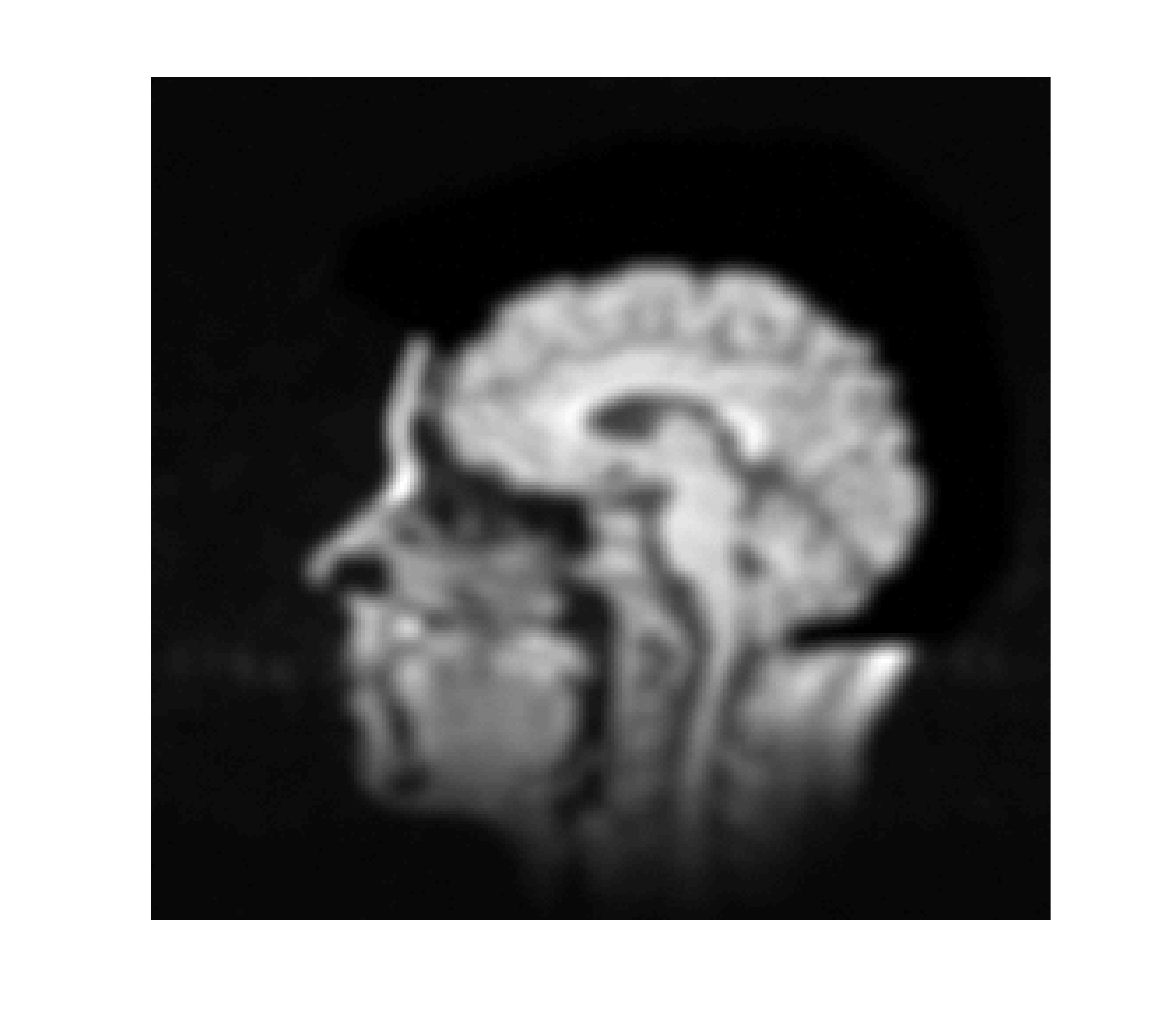}}
\qquad
\subfloat[][PSGA: $f = 0.1174, \mathrm{PSNR} = 33.24, \mathrm{T} = 1.15$]{\includegraphics[width=6.1cm]{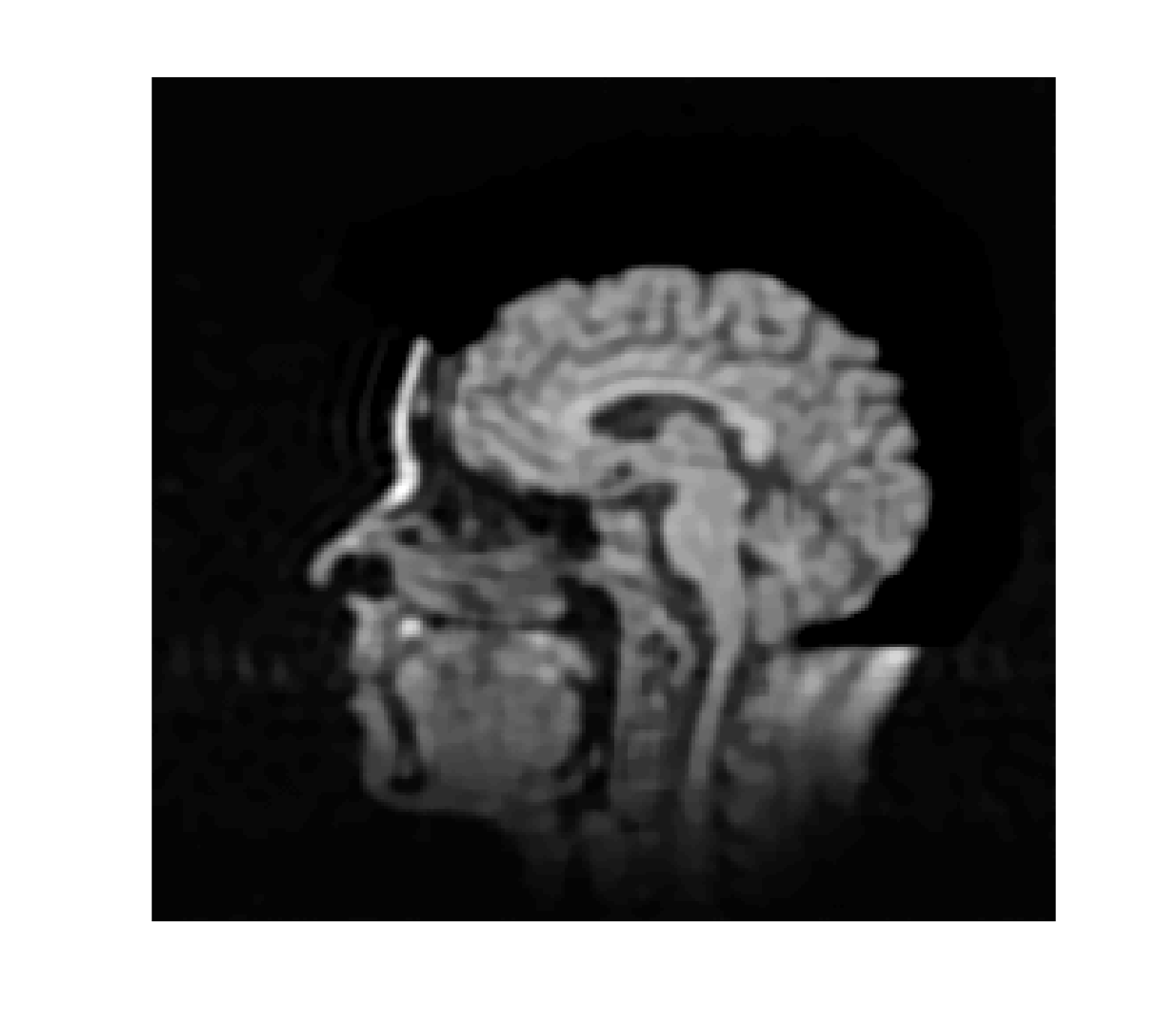}}%
\qquad
\subfloat[][MFISTA: $f = 0.0653, \mathrm{PSNR} = 34.45, \mathrm{T} = 6.51$]{\includegraphics[width=6.1cm]{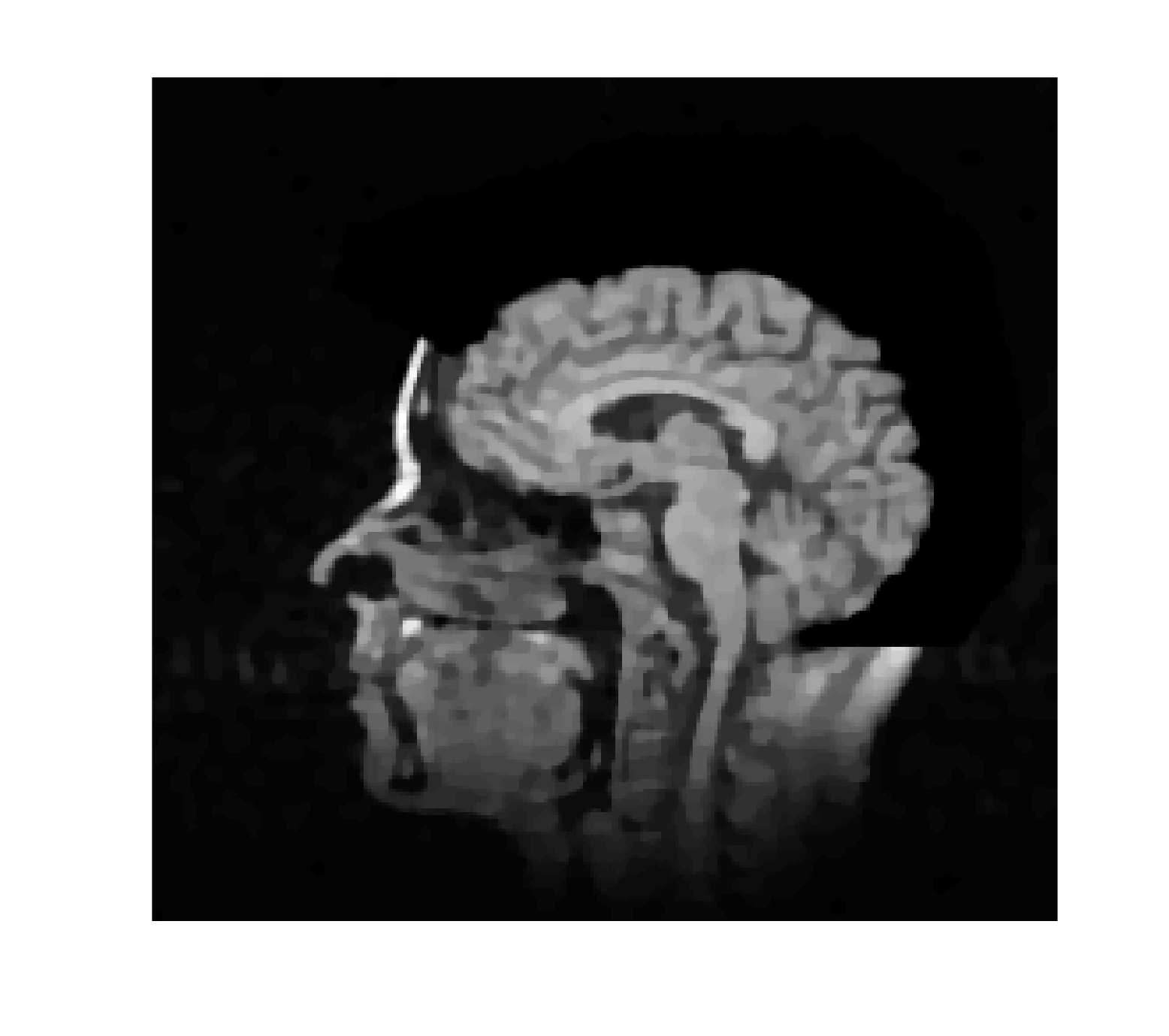}}
\qquad
\subfloat[][ADMM: $f = 0.0651, \mathrm{PSNR} = 34.49, \mathrm{T} = 1.06$]{\includegraphics[width=6.1cm]{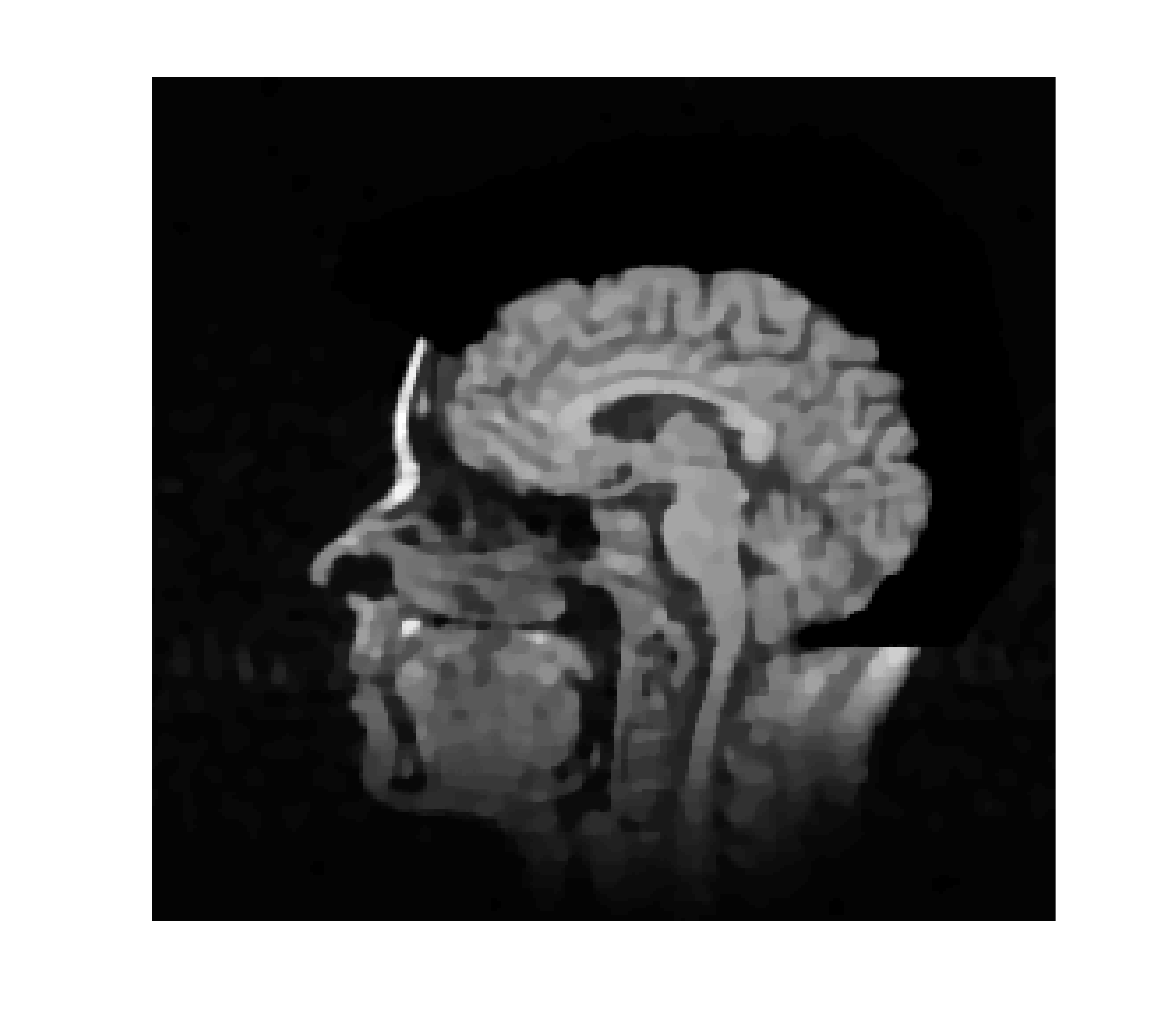}}%
\qquad
\subfloat[][OSGA: $f = 0.0669, \mathrm{PSNR} = 34.46, \mathrm{T} = 1.97$]{\includegraphics[width=6.1cm]{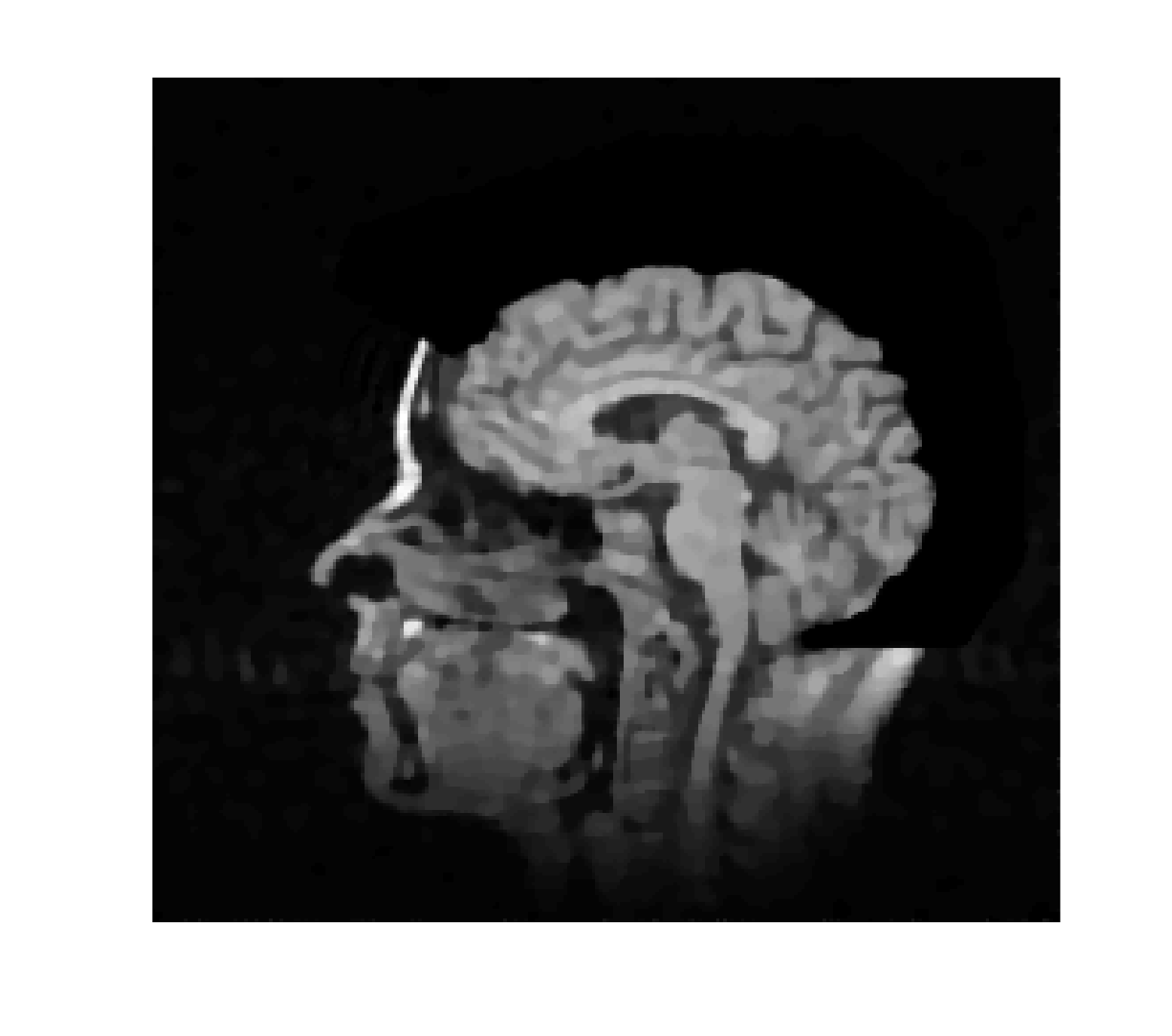}}
\caption{Deblurring of the $256 \times 256$ MR-brain image with the $9 \times 9$ uniform blur and the Gaussian noise with deviation $10^{-3}$ by PSGA, MFISTA, ADMM, and OSGA with the regularization parameter $\lambda = 10^{-4}$. The algorithms were stopped after 100 iterations.}%
\end{figure}

In Table 4 we report PSNR, the best available approximation $f_b$ of the minimimum, and the running time in seconds for three different regularization parameters. The results reported in Figure 2 regarding function values and ISNR show that the algorithms considered are sensitive to the parameter $\lambda$, however, the best results obtained for $\lambda=10^{-4}$. More specifically, the results about function values in subfigures (a), (c), and (e) demonstrate that OSGA outperforms PSGA, which means it performs much better than the lower complexity bound $O(\varepsilon^{-2})$, however, it cannot perform similar to MFISTA attaining the complexity of the order $O(\varepsilon^{-1/2})$. Subfigures (b), (d), and (f) 
show that OSGA is comparable with MFISTA and ADMM and even better than them in the sense of ISNR. The deblurred images by the algorithms considered are illustrated in Figure 3 for $\lambda =10^{-4}$. 

We also consider the restoration of the $641 \times 641$  blurred/noisy Dione image using (\ref{e.l1itvr}). The true image is available in
\begin{center}
\url{http://photojournal.jpl.nasa.gov/Help/
ImageGallery.html}.
\end{center} 
The blurred/noisy image is constructed from the $7 \times 7$ Gaussian kernel with standard deviation 5 and salt-and-pepper impulsive noise with the level $50 \%$. To recover the image, we use DRPD-1, DRPD-2 (Douglas-Rachford primal-dual schemes proposed by {\sc Bo? \& Hendrich} in \cite{BotH2}), ADMM, and OSGA. The algorithms are stopped after 100 iterations, and three different regularization parameters are considered. The results of implementation are reported in Table 5 and Figures 4 and 5. 

The results of Table 5 shows that OSGA outperforms the others in the sense of PSNR. Figure 4 indicates that OSGA attains the best function values for $\lambda = 10^{-1}$ and $\lambda = 5 \times 10^{-2}$, however, ADMM get the best function value for $\lambda = 5 \times 10^{-1}$. It also implies that OSGA are comparable or even better that the others regarding ISNR. The resulted images for $\lambda = 10^{-1}$ are illustrated in Figure 5, demonstrating that the algorithms can restore the image by acceptable qualities while OSGA obtains the best function value and PSNR.

%%%%%%%%%%%%%%%%%%%%
\begin{table}[!htbp]
\caption{Results summary for L1ITV}
\begin{center}\footnotesize
\renewcommand{\arraystretch}{1.3}
\begin{tabular}{|l|l|l|l|l|l|}\hline
\multicolumn{1}{|l|}{} & \multicolumn{1}{l|}{{\bf $\lambda$}}
&\multicolumn{1}{l|}{{\bf DRPD-1}} & \multicolumn{1}{l|}{{\bf DRPD-2}}
&\multicolumn{1}{l|}{{\bf ADMM}} & \multicolumn{1}{l|}{{\bf OSGA}} \\ 
\hline
{\bf PSNR}  &                    & 37.43 & 36.66 & 37.42 & 37.50\\
{\bf $f_b$} & $5 \times 10^{-1}$ & 1.0352e+5 & 1.0365e+5 & 1.0293e+5 & 1.0326e+5\\
{\bf Time } &                    & 10.86 & 6.83 & 8.57 & 9.01\\
\hline
{\bf PSNR}  &                    & 38.70 & 38.11 & 38.35 & 38.73\\
{\bf $f_b$} & $1 \times 10^{-1}$ & 1.0324e+5 & 1.0294e+5 & 1.0281e+5 & 1.0281e+5\\
{\bf Time } &                    & 10.43 & 6.68 & 8.46 & 8.32\\
\hline
{\bf PSNR}  &                    & 37.09 & 36.77 & 30.06 & 37.06\\
{\bf $f_b$} & $5 \times 10^{-2}$ & 1.0336e+5 & 1.0321e+5 & 1.0312e+5 & 1.0299e+5\\
{\bf Time } &                    & 10.26 & 6.27 & 8.25 & 9.23\\
\hline
\end{tabular}
\end{center}
\end{table}

%%%%%%%%%%%%%%%%%%%%%%%%%%%%%
\begin{figure} \label{f.deb3}
\centering
\subfloat[][$\delta_k$ versus iterations, $\lambda = 5 \times 10^{-1}$]{\includegraphics[width=6.1cm]{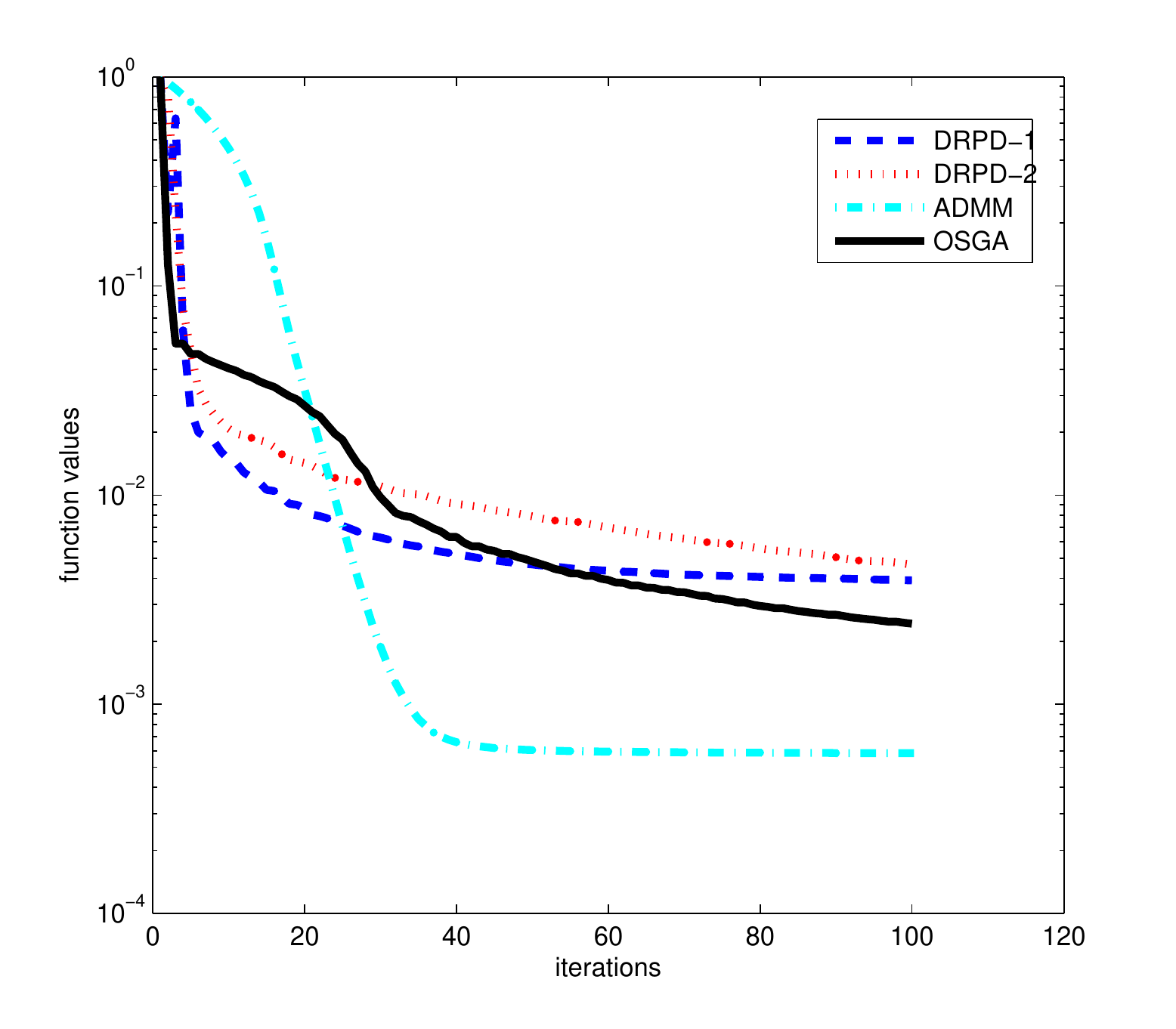}}%
\qquad
\subfloat[][ISNR versus iterations, $\lambda = 5 \times 10^{-1}$]{\includegraphics[width=6.1cm]{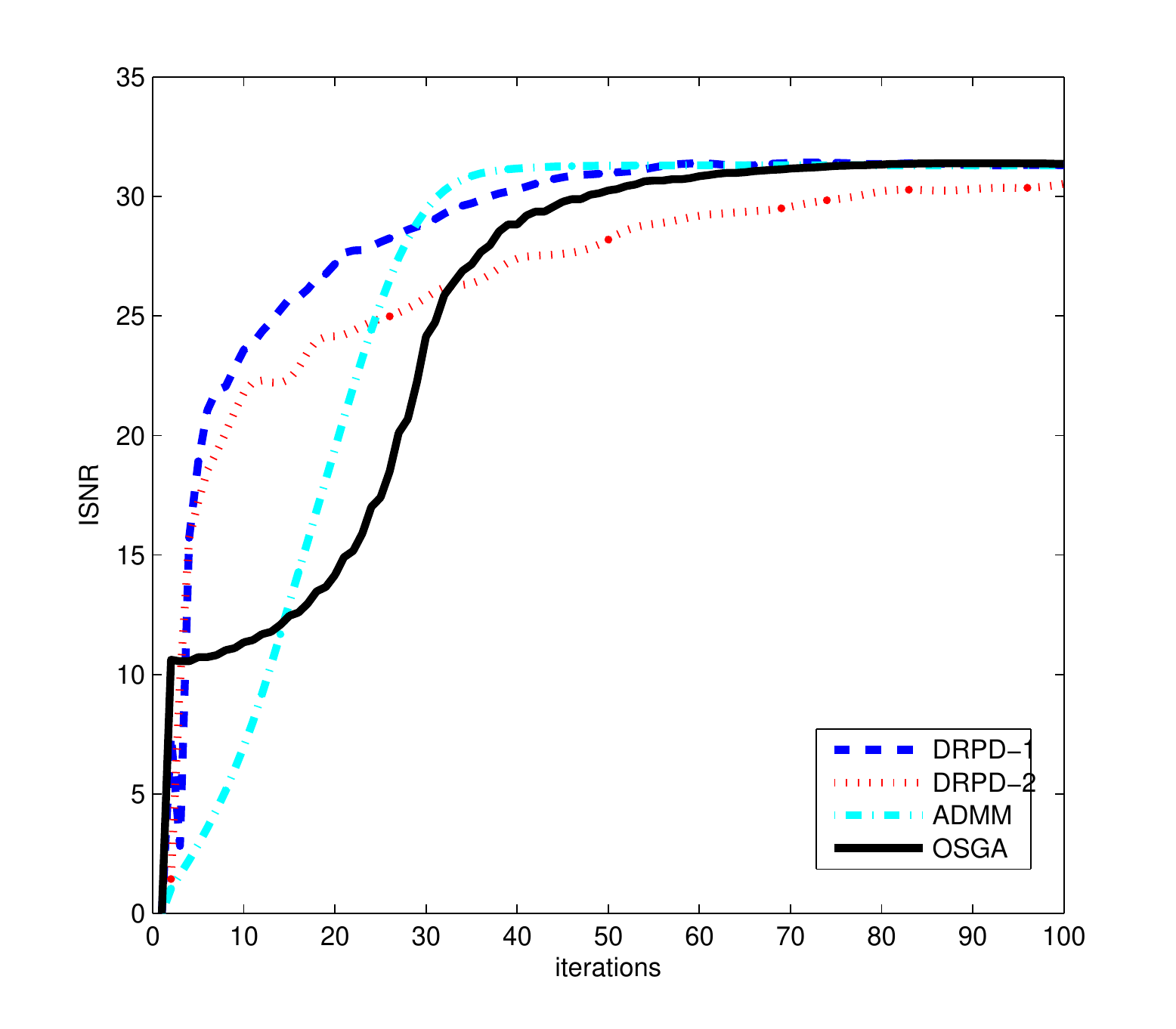}}
\qquad
\subfloat[][$\delta_k$ versus iterations, $\lambda = 1 \times 10^{-1}$]{\includegraphics[width=6.1cm]{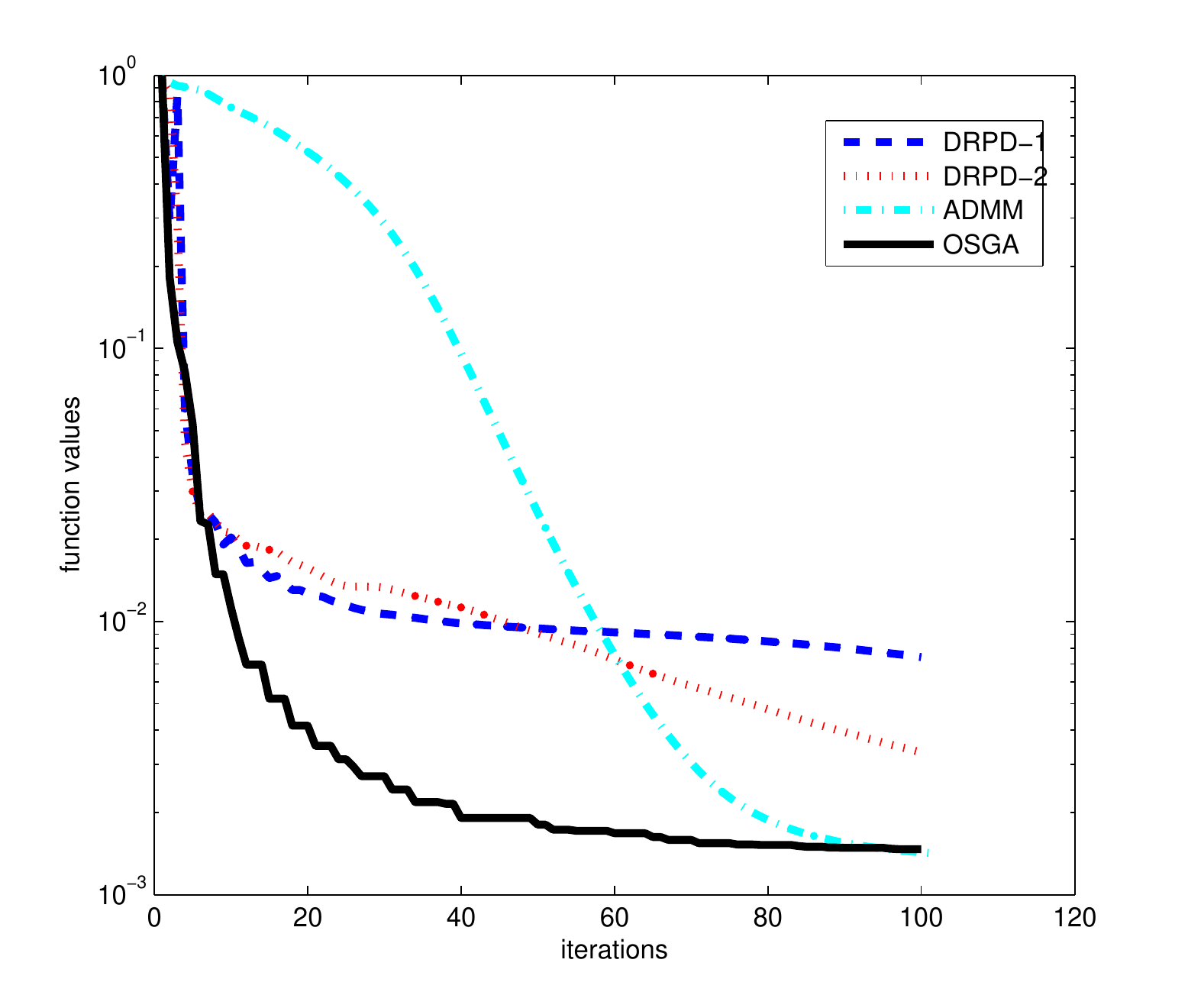}}%
\qquad
\subfloat[][ISNR versus iterations, $\lambda = 1 \times 10^{-1}$]{\includegraphics[width=6.1cm]{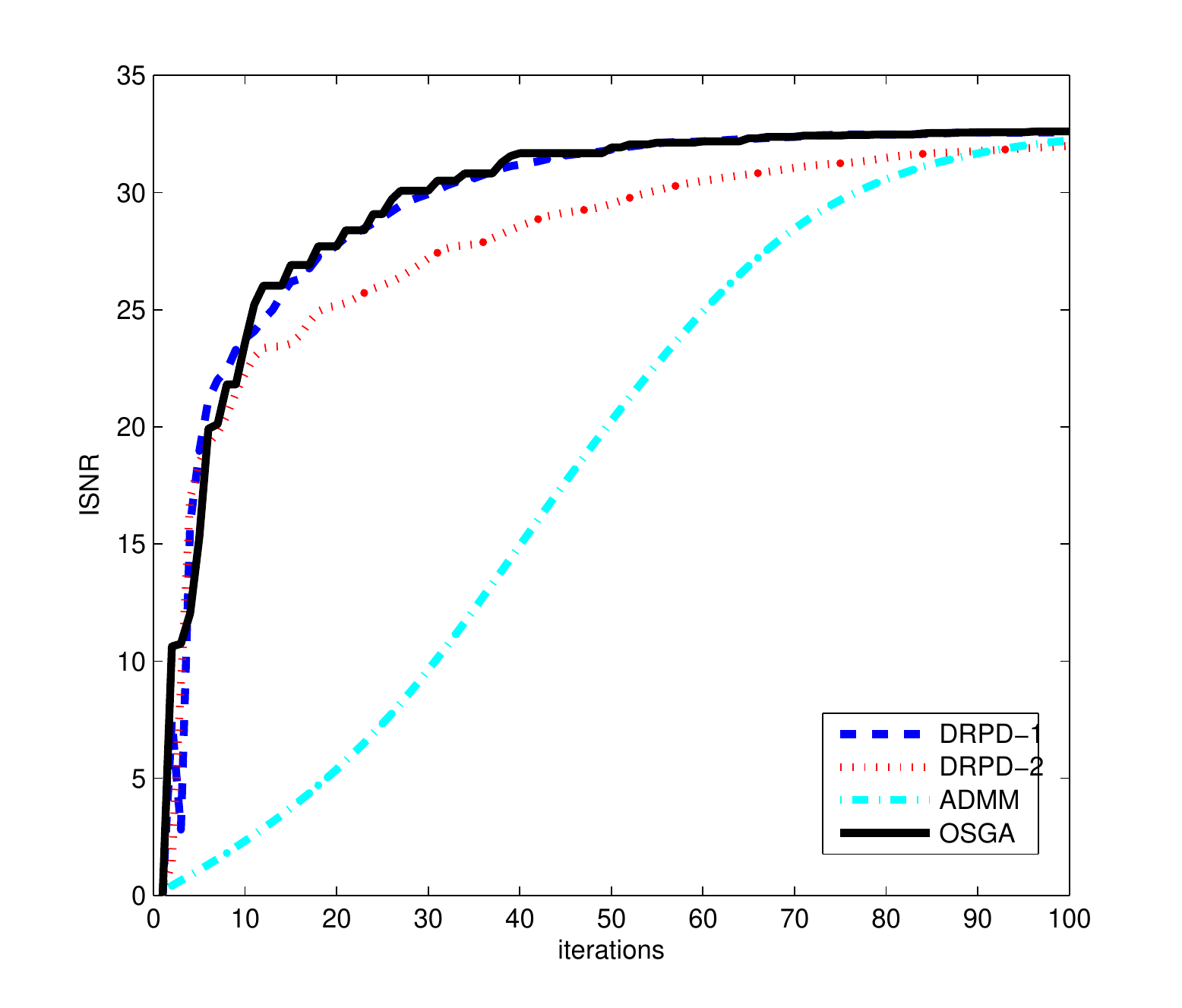}}
\qquad
\subfloat[][$\delta_k$ versus iterations, $\lambda = 5 \times 10^{-2}$]{\includegraphics[width=6.1cm]{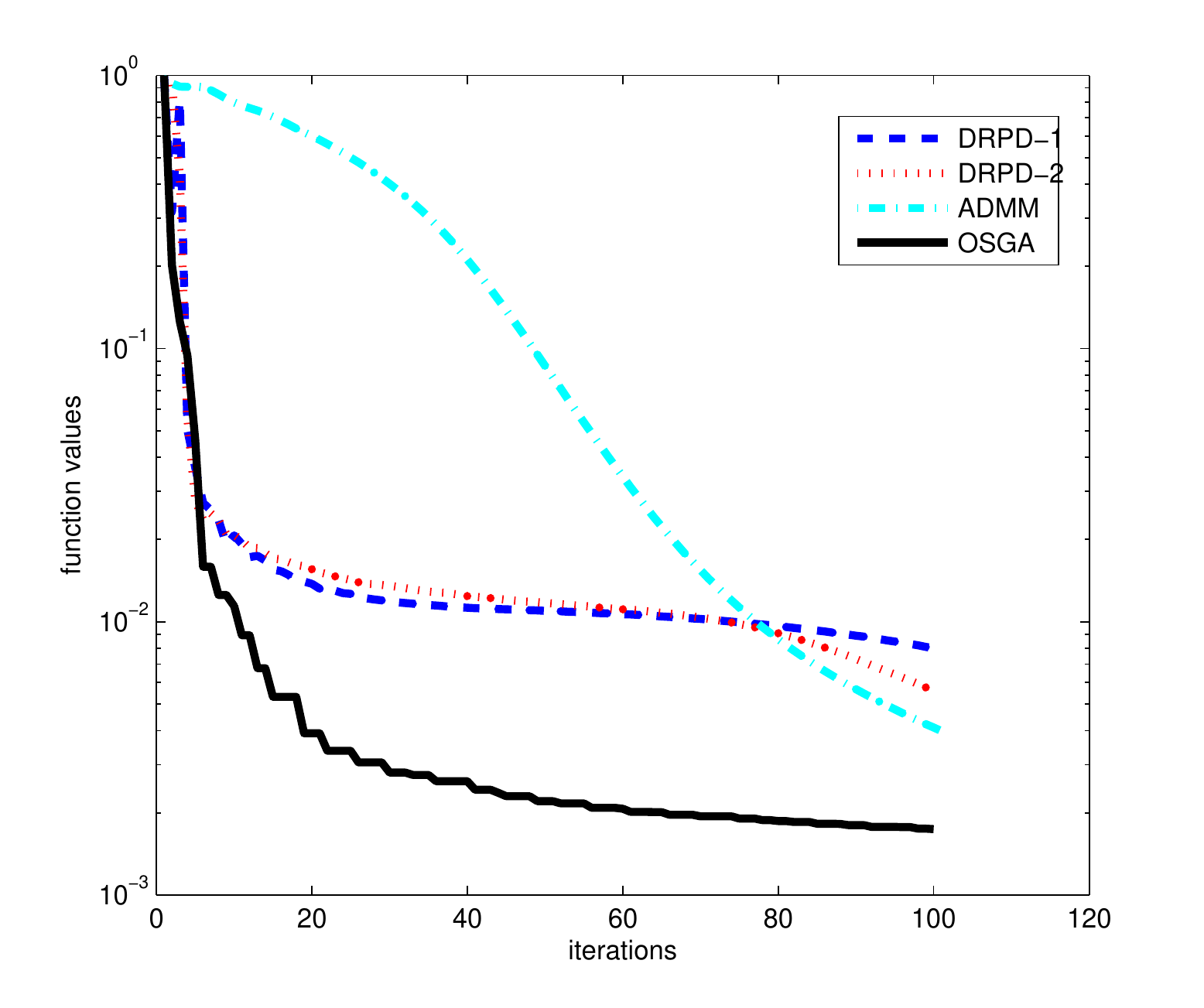}}%
\qquad
\subfloat[][ISNR versus iterations, $\lambda = 5 \times 10^{-2}$]{\includegraphics[width=6.1cm]{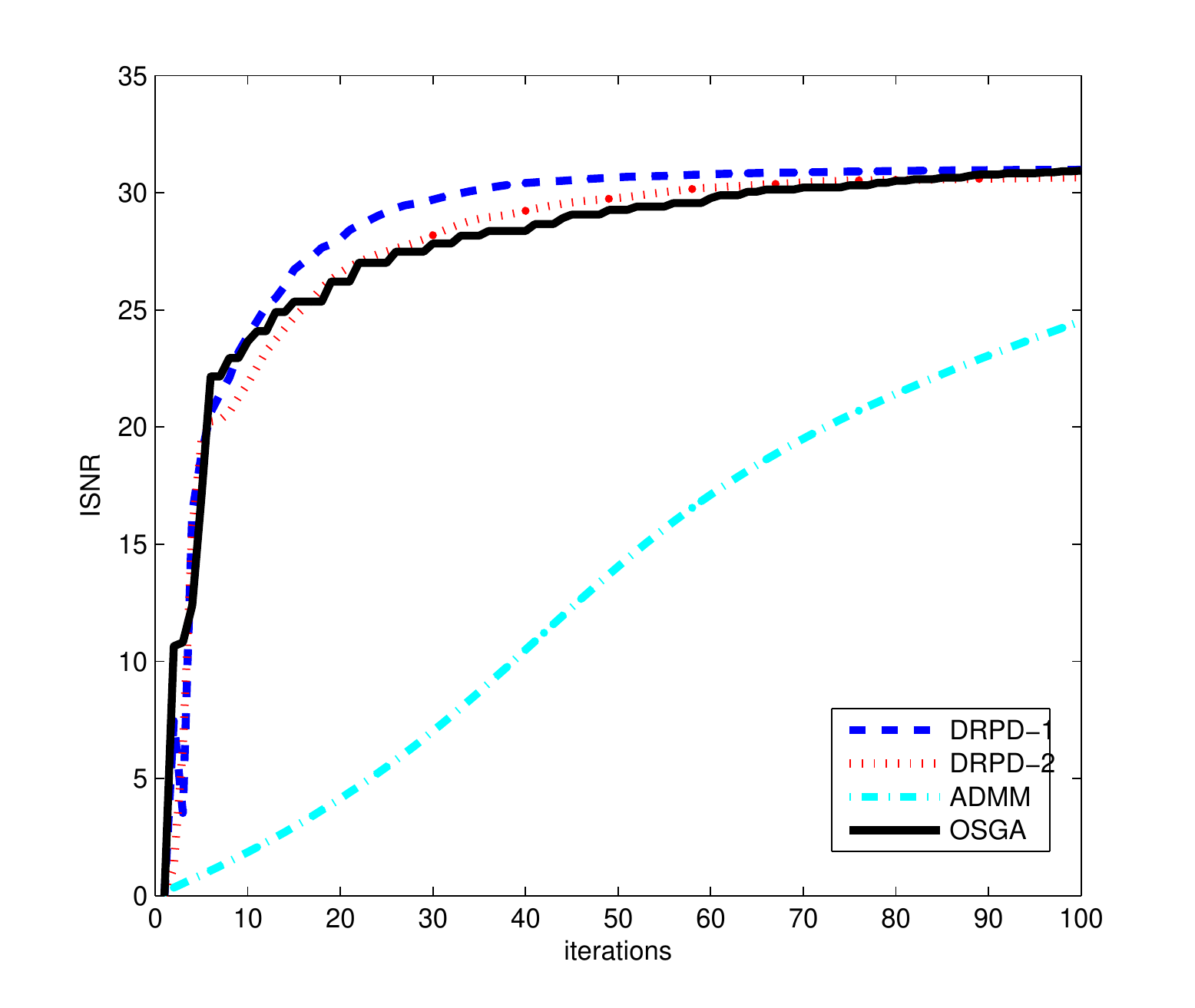}}

\caption{A comparison among DRPD-1, DRPD-2, ADMM, and OSGA for deblurring the $641 \times 641$ Dione image with the various regularization parameter $\lambda$. The blurred/noisy image was constructed by the $7 \times 7$ Gaussian kernel with standard deviation 5 and salt-and-pepper impulsive noise with the level $50 \%$. The algorithms were stopped after 100 iterations. Subfigures (a), (c), and (e) display the relative error of function values $\delta_k$ (\ref{e.delta}) versus iterations, and (b), (d), and (f) demonstrate ISNR (\ref{e.isnr}) versus iterations.}
\end{figure}

%%%%%%%%%%%%%%%%%%%%%%%%%%%%%
\begin{figure} \label{f.deb4}
\centering
\subfloat[][Original image]{\includegraphics[width=6.1cm]{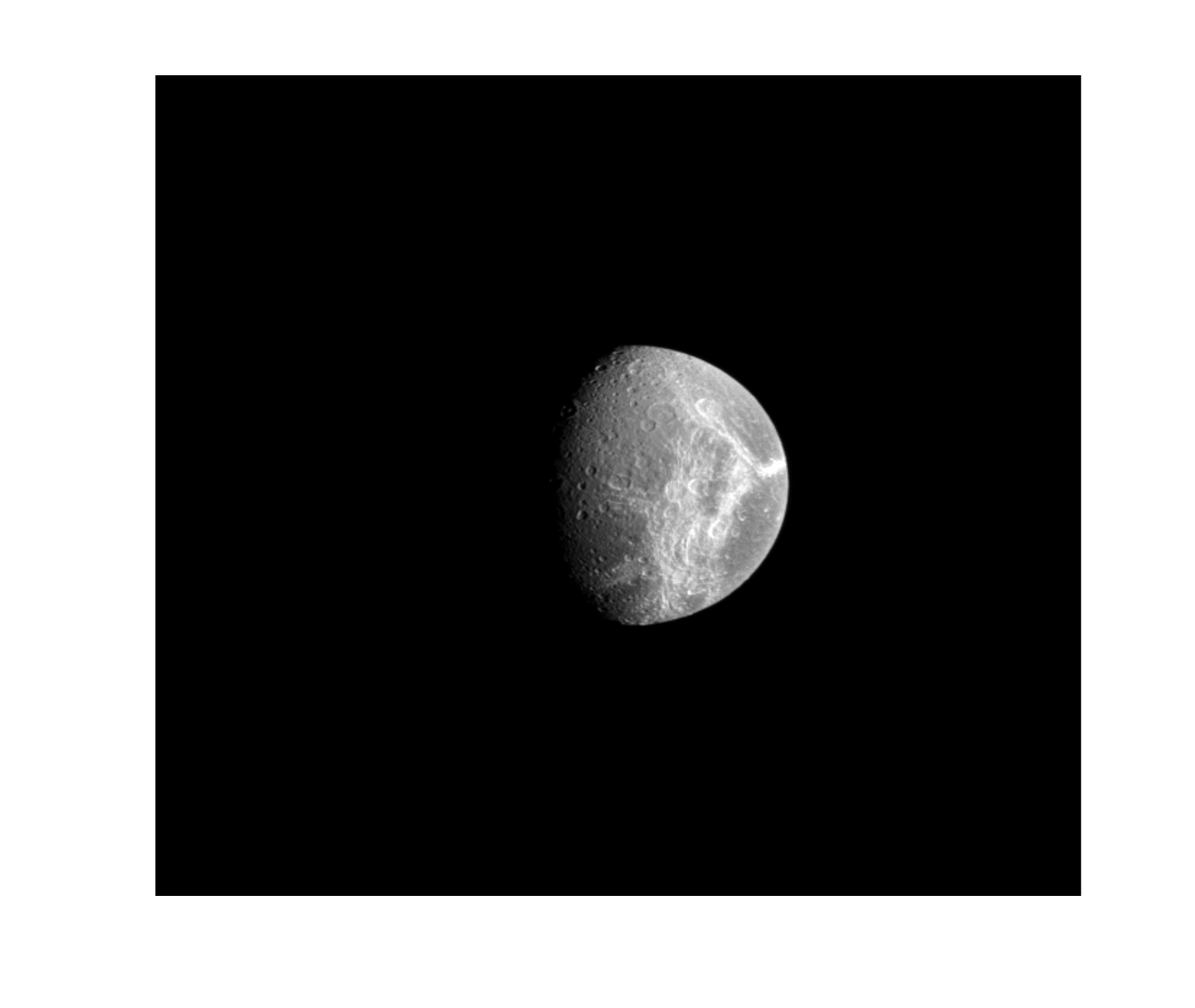}}%
\qquad
\subfloat[][Blurred/noisy image]{\includegraphics[width=6.1cm]{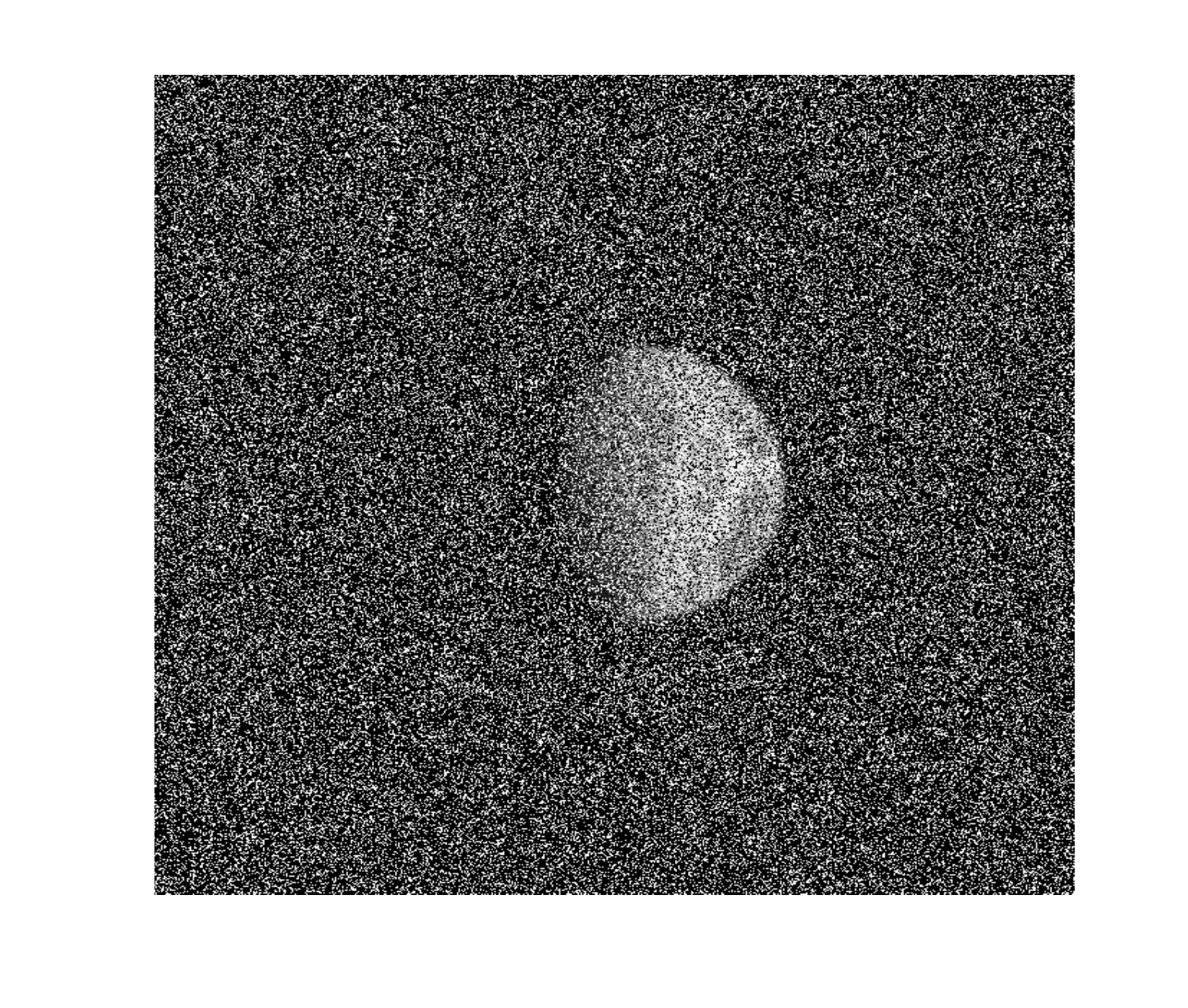}}
\qquad
\subfloat[][DRPD-1: $f = 1.0324e+5, \mathrm{PSNR} = 38.70, \mathrm{T} = 10.43$]{\includegraphics[width=6.1cm]{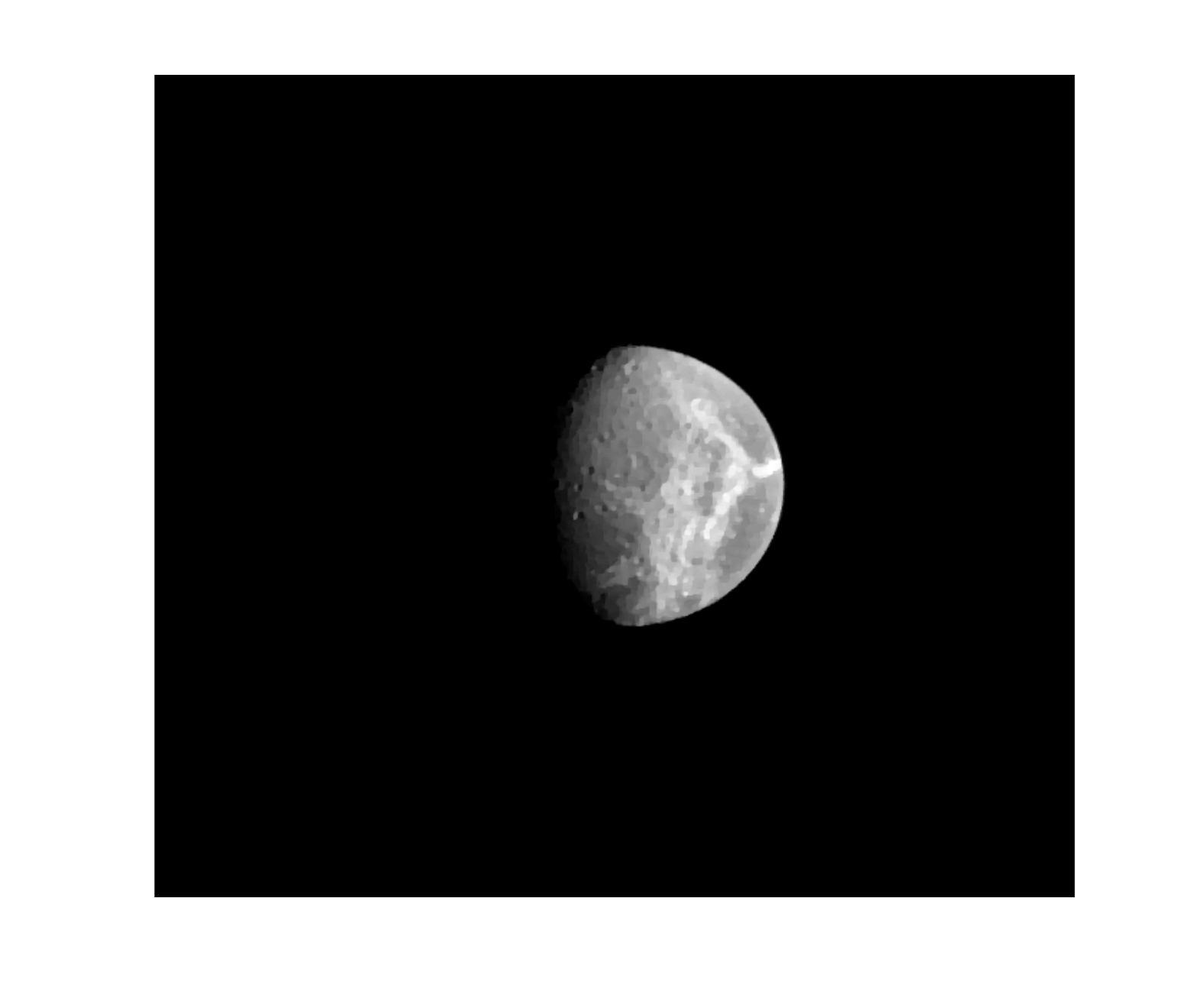}}%
\qquad
\subfloat[][DRPD-2: $f = 1.0294e+5, \mathrm{PSNR} = 38.11, \mathrm{T} = 6.68$]{\includegraphics[width=6.1cm]{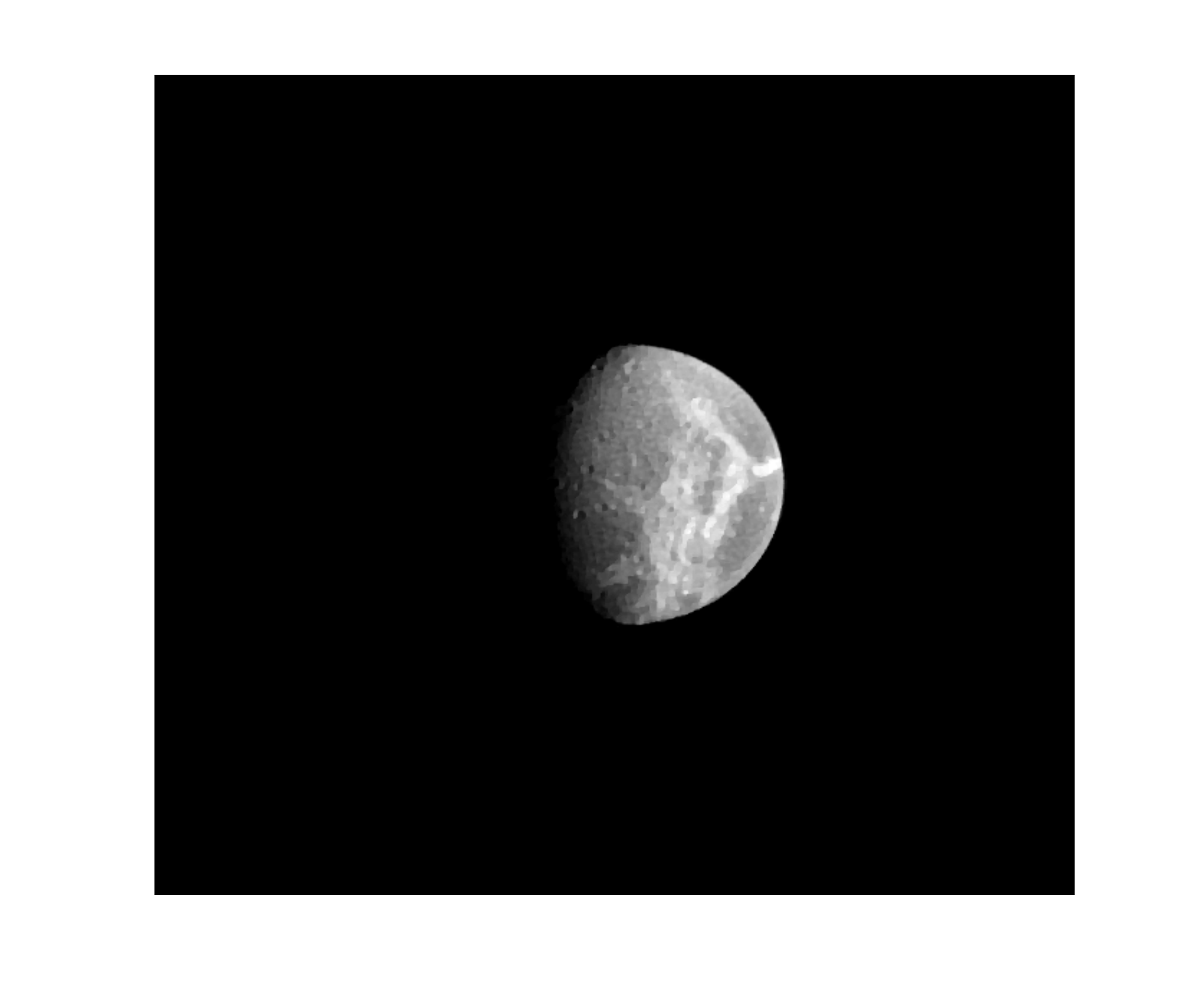}}
\qquad
\subfloat[][ADMM: $f = 1.0281e+5, \mathrm{PSNR} = 38.35, \mathrm{T} = 8.46$]{\includegraphics[width=6.1cm]{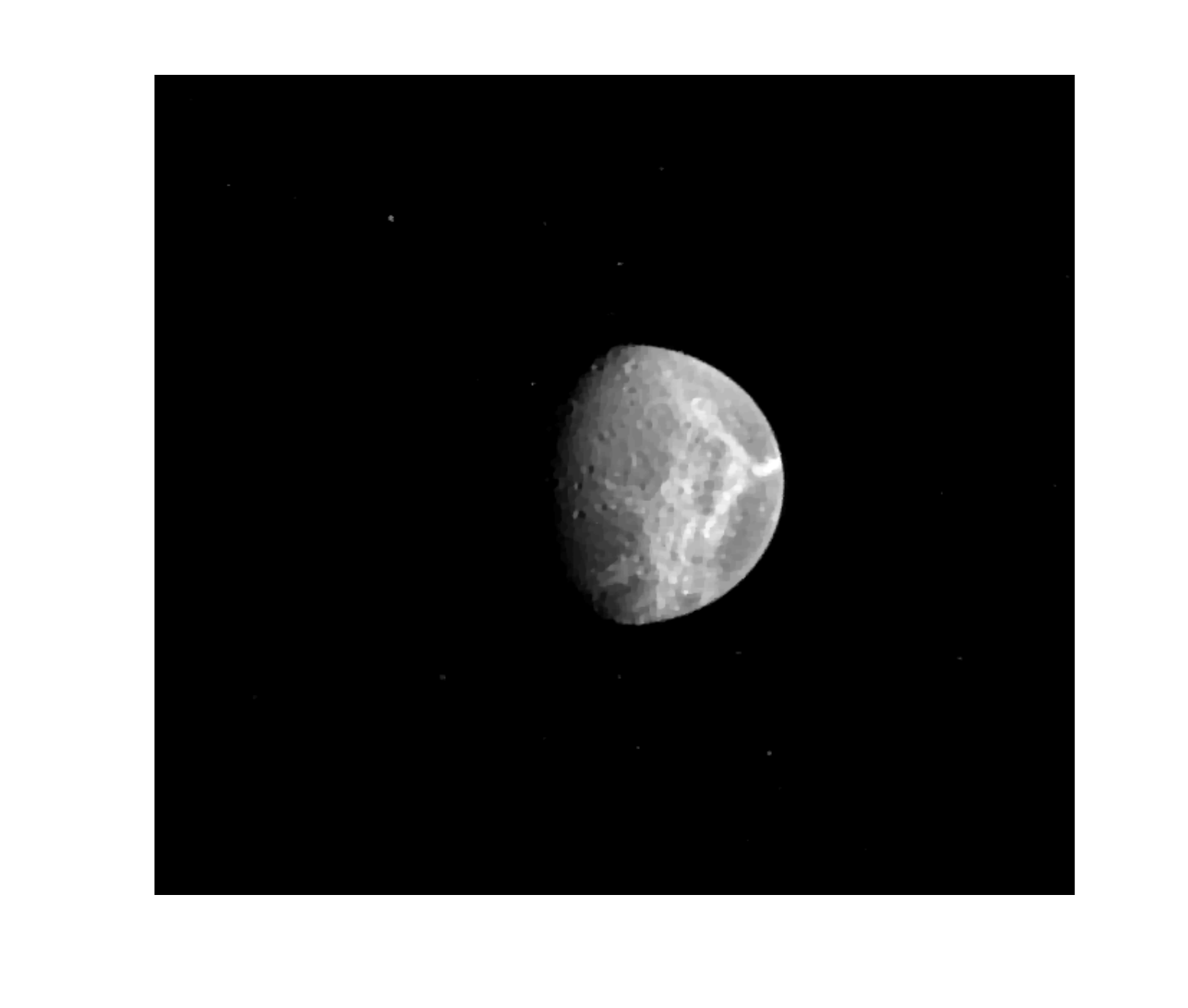}}%
\qquad
\subfloat[][OSGA: $f = 1.0281e+5, \mathrm{PSNR} = 38.73, \mathrm{T} = 8.32$]{\includegraphics[width=6.1cm]{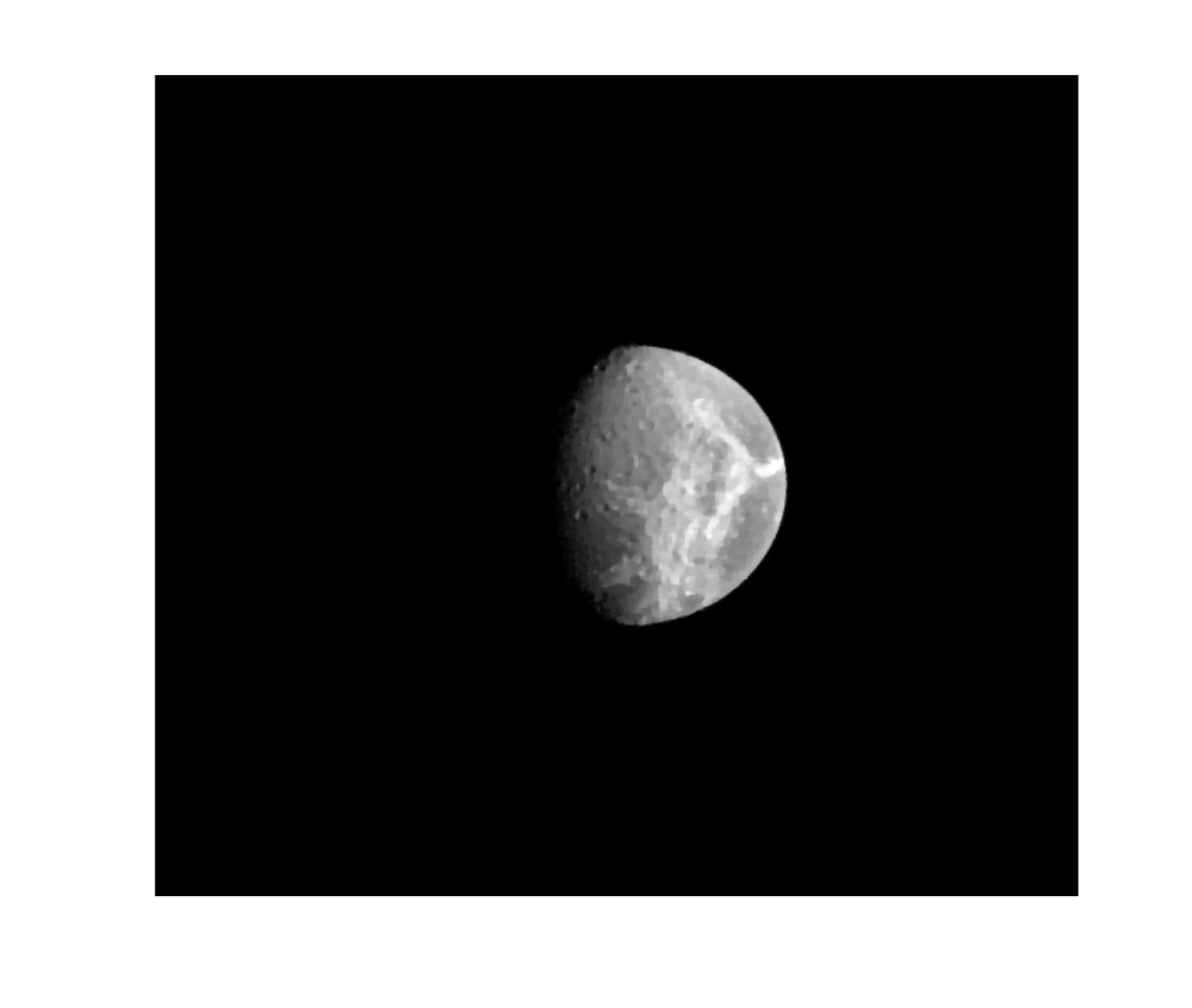}}
\caption{Deblurring of the $641 \times 641$ Dione image using DRPD-1, DRPD-2, ADMM and OSGA with the parameter $\lambda = 10^{-1}$. The algorithms were stopped after 100 iterations. The blurred/noisy image was constructed by the $7 \times 7$ Gaussian kernel with standard deviation 5 and salt-and-pepper impulsive noise with the level $50 \%$.}%
\end{figure}

% ######################################################
% ######################################################
\section{Conclusions}
In this paper an optimal subgradient method, OSGA, is addressed for solving structured convex constrained optimization. More specifically, finding a solution of OSGA's subproblem is investigated in the presence of some convex constraints. Two types of convex constraints are considered, namely, simple convex domains, in which the orthogonal projection in the domains is effectively available, and functional constraints, defined as the sublevel sets of simple convex functions. In each case some interesting examples are discussed for which OSGA's subproblem can be solved efficiently. Numerical results and comparisons with some state-of-the-art algorithms are reported showing that OSGA is efficient and reliable for solving convex optimization problems in applications. \\\\\\
% ######################################################
{\bf Acknowledgement.} We would like to thank {\sc Radu Bot} and {\sc Min Tao} for making their codes DRPD-1, DRPD-2, and ADMM available for us. 

% ########################################################
% ########################################################

\end{document}